\definecolor{mydarkblue}{rgb}{0,0.08,0.45}
\newtheorem{theorem}{Theorem}
\crefname{theorem}{theorem}{Theorems}
\Crefname{Theorem}{Theorem}{Theorems}
\newaliascnt{lemma}{theorem}
\newtheorem{lemma}[lemma]{Lemma}
\crefname{lemma}{lemma}{lemmas}
\Crefname{Lemma}{Lemma}{Lemmas}
\newaliascnt{corollary}{theorem}
\newtheorem{corollary}[corollary]{Corollary}
\crefname{corollary}{corollary}{corollaries}
\Crefname{Corollary}{Corollary}{Corollaries}
\newaliascnt{proposition}{theorem}
\newtheorem{proposition}[proposition]{Proposition}
\crefname{proposition}{proposition}{propositions}
\Crefname{Proposition}{Proposition}{Propositions}
\newaliascnt{definition}{theorem}
\newtheorem{definition}[definition]{Definition}
\crefname{definition}{definition}{definitions}
\Crefname{Definition}{Definition}{Definitions}
\newaliascnt{remark}{theorem}
\newtheorem{remark}[remark]{Remark}
\crefname{remark}{remark}{remarks}
\Crefname{Remark}{Remark}{Remarks}
\crefname{figure}{figure}{figures}
\Crefname{Figure}{Figure}{Figures}
\newtheorem{assumption}{\textbf{A}\hspace{-3pt}}
\Crefname{assumption}{\textbf{A}\hspace{-3pt}}{\textbf{A}\hspace{-3pt}}
\crefname{assumption}{\textbf{A}}{\textbf{A}}
\newtheorem{assumptionC}{\textbf{C}\hspace{-3pt}}
\Crefname{assumptionC}{\textbf{C}\hspace{-3pt}}{\textbf{C}\hspace{-3pt}}
\crefname{assumptionC}{\textbf{C}}{\textbf{C}}
\Crefname{assumptionG}{\textbf{G}\hspace{-3pt}}{\textbf{G}\hspace{-3pt}}
\crefname{assumptionG}{\textbf{G}}{\textbf{G}}
\Crefname{assumptionQ}{\textbf{Q}\hspace{-3pt}}{\textbf{Q}\hspace{-3pt}}
\crefname{assumptionQ}{\textbf{Q}}{\textbf{Q}}
\newcounter{hypA}
\theoremstyle{definition}
\newtheorem{construction}{Construction}
\Crefname{construction}{Construction}{Construction}
\crefname{construction}{Construction}{Construction}
\newaliascnt{example}{definition}
\newtheorem{example}[definition]{Example-Bouncy Particle Sampler}
\crefname{example}{example}{examples}
\Crefname{Example}{Example}{Examples}
\newtheorem*{example*}{Example - Bouncy Particle Sampler}
\crefname{example-BPS}{example-BPS}{examples-BPS}
\Crefname{Example-BPS}{Example-BPS}{Examples-BPS}
\definecolor{qqqqff}{rgb}{0,0,1}
\definecolor{wwwwff}{rgb}{0.4,0.4,1}
\definecolor{fftttt}{rgb}{1,0.2,0.2}
\def\mfg{\mathfrak{g}}
\def\msa{\mathsf{A}}
\def\msk{\mathsf{K}}
\def\mss{\mathsf{S}}
\def\msb{\mathsf{B}} 
\def\msc{\mathsf{C}}
\def\mse{\mathsf{E}}
\def\msl{\mathsf{L}}
\def\msm{\mathsf{M}}
\def\msu{\mathsf{U}}
\def\msv{\mathsf{V}}
\def\msy{\mathsf{Y}}
\def\msd{\mathsf{D}}
\def\mcbb{\mathcal{B}}  
\newcommand{\mcb}[1]{\mathcal{B}(#1)}
\def\mcf{\mathcal{F}}
\def\mcg{\mathcal{G}}
\def\tmcf{\tilde{\mathcal{F}}}
\def\mcp{\mathcal{P}}
\def\rset{\mathbb{R}}
\def\nset{\mathbb{N}}
\def\nsets{\mathbb{N}^*}
\def\rmd{\mathrm{d}}
\def\mrd{\mathrm{d}}
\def\rme{\mathrm{e}}
\def\mrm{\mathrm{m}}
\def\mrc{\mathrm{C}}
\def\mrg{\mathrm{G}}
\def\mrgD{\mathrm{G(\ccint{0,T}\times \msm)}}
\def\mrl{\mathrm{L}}
\def\rmc{\mathrm{C}}
\def\rmt{\mathrm{T}}
\def\mrt{\mathrm{T}}
\newcommand{\cco}{\llbracket}
\newcommand{\ccf}{\rrbracket}
\newcommand{\po}{\left(}
\newcommand{\pf}{\right)}
\newcommand{\R}{\mathbb R}
\newcommand{\na}{\nabla}
\newcommand{\loiy}{\mu_{\mathrm{v}}}
\def\MeasFspace{\mathbb{M}}
\newcommandx{\functionspace}[2][1=+]{\mathbb{F}_{#1}(#2)}
\newcommandx{\VarDeux}[3][3=]{\operatorname{Var}^{#3}_{#1}\left\{#2 \right\}}
\newcommand{\1}{\mathbbm{1}}
\newcommand{\LeftEqNo}{\let\veqno\@@leqno}
\newcommand{\fraca}[2]{\left. \parentheseDeux{#1}  \middle / \parentheseDeux{#2} \right.}
\newcommand{\N}{\ensuremath{\mathbb{N}}}
\newcommand{\PE}{\mathbb{E}}
\newcommand{\PP}{\mathbb{P}}
\newcommand{\abs}[1]{\left\vert #1 \right\vert}
\newcommand{\absLigne}[1]{\vert #1 \vert}
\newcommand{\tvnorm}[1]{\| #1 \|_{\mathrm{TV}}}
\newcommandx{\Vnorm}[2][1=V]{\| #2 \|_{#1}}
\newcommandx{\VnormEq}[2][1=V]{\left\| #2 \right\|_{#1}}
\newcommandx{\norm}[2][1=]{\ifthenelse{\equal{#1}{}}{\left\Vert #2 \right\Vert}{\left\Vert #2 \right\Vert^{#1}}}
\newcommandx{\normLigne}[2][1=]{\ifthenelse{\equal{#1}{}}{\Vert #2 \Vert}{\Vert #2\Vert^{#1}}}
\newcommandx{\normL}[2][1=]{\ifthenelse{\equal{#1}{}}{\left\Vert #2 \right\Vert_{\msl}}{\left\Vert #2 \right\Vert^{#1}_{\msl}}}
\newcommandx{\normLLigne}[2][1=]{\ifthenelse{\equal{#1}{}}{\Vert #2 \Vert_{\msl}}{\Vert #2\Vert^{#1}_{\msl}}}
\newcommand{\parenthese}[1]{\left(#1 \right)}
\newcommand{\parentheseLigne}[1]{(#1 )}
\newcommand{\parentheseDeux}[1]{\left[ #1 \right]}
\newcommand{\defEns}[1]{\left\lbrace #1 \right\rbrace }
\newcommand{\defEnsLigne}[1]{\lbrace #1 \rbrace }
\newcommand{\ps}[2]{\left\langle#1,#2 \right\rangle}
\newcommand{\proba}[1]{\mathbb{P}\left( #1 \right)}
\newcommand{\probaLigne}[1]{\mathbb{P}( #1 )}
\newcommandx\probaMarkovTilde[2][2=]
\newcommand{\probaMarkov}[2]{\mathbb{P}_{#1}\left( #2\right)}
\newcommand{\expe}[1]{\PE \left[ #1 \right]}
\newcommand{\expeLigne}[1]{\PE [ #1 ]}
\newcommand{\expeMarkov}[2]{\PE_{#1} \left[ #2 \right]}
\newcommand{\plusinfty}{+\infty}
\def\ie{\textit{i.e.}}
\def\eqsp{\;}
\newcommand{\coint}[1]{\left[#1\right)}
\newcommand{\ocint}[1]{\left(#1\right]}
\newcommand{\ooint}[1]{\left(#1\right)}
\newcommand{\ccint}[1]{\left[#1\right]}
\renewcommand{\iint}[1]{\left\llbracket#1\right\rrbracket}
\newcommand{\iintLigne}[1]{\llbracket#1\rrbracket}
\newcommand{\cointLigne}[1]{[#1)}
\newcommand{\oointLigne}[1]{(#1)}
\newcommand{\ccintLigne}[1]{[#1]}
\newcommandx{\weight}[2][2=n]{\omega_{#1,#2}^N}
\newcommand{\ball}[2]{\operatorname{B}\left( #1,#2 \right)}
\newcommand{\cball}[2]{\overline{\operatorname{B}}\left( #1,#2 \right)}
 \newcommand{\tcr}[1]{{#1}}
\def\dist{\mathrm{dist}}
\newcommandx\sequence[3][2=,3=]
\newcommandx\sequenceD[3][2=,3=]
\newcommandx{\sequencen}[2][2=n\in\N]{\ensuremath{\{ #1_n, \eqsp #2 \}}}
\newcommandx\sequenceDouble[4][3=,4=]
\newcommandx{\sequencenDouble}[3][3=n\in\N]{\ensuremath{\{ (#1_{n},#2_{n}), \eqsp #3 \}}}
\def\iid{i.i.d.}
\def\eg{e.g.}
\newcommand{\opnorm}[1]{{\left\vert\kern-0.25ex\left\vert\kern-0.25ex\left\vert #1 
    \right\vert\kern-0.25ex\right\vert\kern-0.25ex\right\vert}}
\def\generator{\mathcal{A}}
\def\sgenerator{\bar{\mathcal{A}}}
\def\bfe{\mathbf{e}}
\def\Id{\operatorname{Id}}
\newcommandx{\CPE}[3][1=]{{\mathbb E}_{#1}\left[#2 \left \vert #3 \right. \right]} 
\newcommandx{\CPELigne}[3][1=]{{\mathbb E}_{#1}[#2  \, \vert \,  #3  ]} 
\newcommandx{\CPVar}[3][1=]{\mathrm{Var}^{#3}_{#1}\left\{ #2 \right\}}
\newcommand{\CPP}[3][]
{\ifthenelse{\equal{#1}{}}{{\mathbb P}\left(\left. #2 \, \right| #3 \right)}{{\mathbb P}_{#1}\left(\left. #2 \, \right | #3 \right)}}
\newcommand{\CPPLigne}[3][]
{\ifthenelse{\equal{#1}{}}{{\mathbb P}( #2 \, | #3 )}{{\mathbb P}_{#1}( #2 \,  | #3 )}}
\newcommandx{\osc}[2][1=]{\mathrm{osc}_{#1}(#2)}
\def\Id{\operatorname{Id}}
\def\domain{\mathrm{D}}
\def\martfg{M^{f,g}}
\newcommand\Ddir[1]{D_{#1}}
\def\Refl{\mathrm{R}}
\def\transpose{\operatorname{T}}
\def\w{w}
\def\bX{\bar{X}}
\def\bY{\bar{Y}}
\def\bE{\bar{E}}
\def\S{S}
\def\tI{\tilde{I}}
\def\tmsk{\tilde{\msk}}
\def\yt{\tilde{y}}
\def\ty{\yt}
\def\tT{\tilde{T}}
\def\tS{\tilde{S}}
\def\bX{\bar{X}}
\def\bY{\bar{Y}}
\def\bE{\bar{E}}
\def\bS{\bar{S}}
\def\bN{\bar{N}}
\def\veps{\varepsilon}
\def\sphere{\mss}
\def\rate{\lambda_{\mathrm{c}}}
\newcommand{\ensemble}[2]{\left\{#1\,:\eqsp #2\right\}}
\newcommand{\ensembleLigne}[2]{\{#1\,:\eqsp #2\}}
\def\rmD{\mathrm{D}}
\def\mrD{\rmD}
\def\mrd{\mathrm{D}}
\def\mrb{\mathrm{B}}
\def\rmb{\mrb}
\def\mrc{\mathrm{C}}
\newcommand{\supp}{\mathrm{supp}}
\newcommand\coupling[2]{\Gamma(\mu,\nu)}
\def\supp{\mathrm{supp}}
\def\tpi{\tilde{\pi}}
\newcommand\adh[1]{\overline{#1}}
\def\opK{\mathrm{K}}
\newcommand{\comp}{\mathrm{c}}
\newcommand{\compl}{\mathrm{c}}
\renewcommand{\geq}{\geqslant}
\renewcommand{\leq}{\leqslant}
\def\bfG{\mathbf{G}}
\def\tH{\tilde{H}}
\def\tN{\tilde{N}}
\def\tvarphi{\tilde{\varphi}}
\def\diff{\nabla}
\def\diffD{\mathrm{D}}
\def\rv{random variable}
\def\rvs{random variables}
\def\PDMP{\mathrm{PDMP}}
\def\ttr{r}
\def\generatorT{\mathcal{L}}
\def\range{\mathrm{Ran}}
\newcommand\restr[2]{{
  \left.\kern-\nulldelimiterspace 
  #1 
  \vphantom{\big|} 
  \right|_{#2} 
  }}
\def\closure{\mathrm{Cl}}
\begin{document}

\title[PDMP and their invariant measures]{Piecewise Deterministic Markov Processes and their invariant measures}
 \author[A. Durmus, A. Guillin, P. Monmarché]{Alain Durmus,  Arnaud Guillin, Pierre Monmarché}

\address{CMLA, ENS Cachan, CNRS, Université Paris-Saclay, 94235 Cachan, France}
\email{alain.durmus@cmla.ens-cachan.fr}

\address{Laboratoire Jacques-Louis Lions and Laboratoire de Chimie Th{\'e}orique, Sorbonne Universit{\'e}}
\email{pierre.monmarche@sorbonne-universite.fr}
\urladdr{https://www.ljll.math.upmc.fr/monmarche/}

\address{Laboratoire de Math\'ematiques Blaise Pascal,
CNRS UMR 6620,
Universit\'e Clermont-Auvergne}
\email{guillin@math.univ-bpclermont.fr}
\urladdr{http://math.univ-bpclermont.fr/$\sim$guillin}



\date{}

\begin{abstract}
  Piecewise Deterministic Markov Processes (PDMPs) are studied in a
  general framework. First, different constructions are proven to be
  equivalent. Second, we introduce a coupling between two PDMPs
  following the same differential flow which implies quantitative
  bounds on the total variation between the marginal distributions of
  the two processes. Finally two results are established regarding the
  invariant measures of PDMPs. A practical condition to show that a
  probability measure is invariant for the associated PDMP
  semi-group is presented. In a second time, a bound on the invariant
  probability measures in $V$-norm of two PDMPs following the same
  differential flow is established. This last result is then applied
  to study the asymptotic bias of some non-exact PDMP MCMC methods.
\end{abstract}

\maketitle

\section{Introduction}
Piecewise Deterministic Markov Processes (PDMP), similarly to
diffusion processes, form an important class of Markov processes,
which are used to model random dynamical systems in various fields
(see \eg~\cite{MalrieuPDMP,ABGKZ}). Recently, interest has grown for their use to sample from a target distribution \cite{BierkensFearnheadRoberts,PetersdeWith,Doucet2015}. The resulting class of algorithms is referred to as PDMP Monte Carlo (PDMP-MC) methods. 
 To this end,
natural questions arise as to the stationarity of the target measure,
the ergodicity of the corresponding process and possible bias
introduced by the method. In mathematical physics \cite{BDMM2017} and
biology \cite{Calvez}, the long time behaviour of these processes has
been the subject of several works. In this context, these studies are
done through the Kolmogorov Fokker Planck operator $\generator^{\star}$ of the PDMP of
interest given for all smooth density $\rho$ on $\rset^{2d}$ by
\begin{equation*}
  \generator^{\star} \rho = -\ps{\Xi}{\nabla \rho} + K ( \lambda \rho) - \lambda \rho \eqsp,
\end{equation*}
where $\Xi$ is a smooth vector field of $\rset^{2d}$, $\lambda : \rset^{2d} \to \rset_+$ and $K$ is a non-local collision operator.

The relevance of the present work emerged while writing the companion
paper \cite{DurmusGuillinMonmarche:bouncy}, concerned with the
geometric ergodicity of the Bouncy Particle Sampler (BPS)
\cite{Doucet2015}, an MCMC algorithm which, given a target distribution
$\pi$ on $\rset^d$, introduces a PDMP for which $\pi$ is invariant.
In order to make rigorous several arguments in
\cite{DurmusGuillinMonmarche:bouncy}, technical lemmas had to be
established, in particular to cope with the fact that Markov
semi-groups associated to PDMP lack the regularity properties of
(hypo-)elliptic diffusions, which yields additional difficulties and technicalities. These results, of interest in a more general
framework, are gathered here with the hope that it will set a framework where for example verification of the invariance of a measure becomes a mere calculus via the generator (as it is the case for diffusion processes under mild assumptions). To illustrate our results,  BPS is used as a recurrent
example.

Let us present these different results, together with the organization of the paper. \Cref{sec:framework} contains the basic definitions of our framework, and in particular presents the construction of a PDMP. Alternative constructions are shown in \Cref{sec:alternative-constructions,sec:superposition} to give the same process (i.e.~to give a random variable with the same law on the Skorokhod space). Conditions which ensure that PDMPs are non explosive are presented in \Cref{sec:non-explose}. The synchronous coupling of two PDMPs is defined in \Cref{sec:synchrone}, which aims  to construct simultaneously two different PDMPs, starting at the same initial state, in such a way that they have some probability to stay equal for some time. It yields estimates on the difference of the corresponding semi-groups in total variation norm. In \Cref{sec:stab_c1c}, conditions are established under which the semi-group associated to a PDMP leaves invariant the space of compactly-supported smooth functions. 
Using this result, a practical criterion to ensure that a given probability measure $\mu$ is invariant for a PDMP is obtained in Section~\ref{eq:invariant_main}. Indeed, it is well-known that, denoting by $(\sgenerator,\rmD(\sgenerator))$ the strong generator of the Markov semi-group associated to the PDMP, then $\mu$ is invariant if and only if $\int \sgenerator f \rmd \mu = 0$ for all $f$ in a core of $\sgenerator$. Nevertheless, due to the lack of regularization properties of the semi-group, it is generally impossible to determine such a core. We will prove that, under some simple assumptions, it is enough to consider compactly-supported smooth functions $f$. Finally, in \Cref{sec:invariant_2}, we are interested in bounding the $V$-norm between two invariant probability measures $\mu_1$ and $\mu_2$ corresponding to two PDMPs sharing the same differential flow but with different jump rates and Markov kernels.  This question is here mainly motivated by the thinning method used to sample trajectories of PDMPs \cite{lewis:shedler:1979,Thieullen2016}. Indeed, a PDMP can be exactly sampled (in the sense that no time discretization is needed) provided that the associated differential flow can be computed and a simple upper bound on the jump rate is known. When this is not the case, a PDMP with a truncated jump rate can be sampled, and our result gives a control on the ensuing error. 


\subsection*{Notation and conventions}
For all $a,b \in \rset$, we denote $a_+= \max(0,a)$, $a\vee b =
\max(a,b)$, $a \wedge b = \min(a,b)$. $\Id$ stands for the identity
matrix on $\rset^d$. 

For all $x,y \in \rset^d$, the scalar product between $x$ and $y$ is
denoted by $\ps{x}{y}$ and the Euclidean norm of $x$ by $\norm{x}$.
For all $x \in \rset^d$, $r >0$, we denote  by
$\ball{x}{r}=\ensemble{\w \in \rset^d}{\norm{w-x}
  \tcr{<} r}$ the ball centered at $x$ with radius $r$. The closed ball  centered in $x$ with radius $r$ is denoted by $\cball{x}{r}$. For any
$d$-dimensional matrix $M$, define by $\norm{M} = \sup_{\w \in
  \ball{0}{1}}  \norm{M \w} $ the operator norm associated with
$M$.


Let $(\msm,\mfg)$ be a smooth closed Riemannian sub-manifold of $\rset^N$
and $\mcbb(\msm)$ the associated Borel $\sigma$-field. Let $\infty \notin M$ be a cemetery point. The distance induced by $\mfg$ is denoted by $\dist$. With a slight abuse of notations,  the ball (respectively closed ball)  centered at $x \in \msm$ with radius $r >0$ is denoted by $\ball{x}{r}$ (respectively $\cball{x}{r}$).

For all function $F : \msm \to \rset^m$ and compact set
$\msk \subset \msm$, denote
$\norm{F}_{\infty} = \sup_{x \in \msm} \norm{F(x)}$,
$\norm{F}_{\infty, \msk} = \sup_{x \in \msk} \norm{F(x)}$.  Denote by $\MeasFspace(\msm)$ the space of measurable functions from $\msm$ to $\rset$. Denote by
$\mrb(\msm)$ the set of all measurable and bounded functions from
$\msm$ to $\rset$. The space $\mrb(\msm)$ is endowed with the topology
associated with the uniform norm $\norm{\cdot}_{\infty}$.  Let
$\mrc(\msm)$ stand for the set of continuous functions from $\msm$ to
$\rset$, $\rmc_0(\msm)$ the subset of $\rmc(\msm)$ consisting of
continuous functions vanishing at infinity and, for all
$k \in \nsets$, let $\mrc^k(\msm)$ be the set of $k$-times
continuously differentiable functions from $\msm$ to $\rset$. For all $k \in \nset$, denote
 by $\mrc^{k}_c(\msm)$ and $\mrc^{k}_b(\msm)$ the
set of functions in $\mrc^k(\msm)$ with compact support and the set of
bounded functions in $\mrc^k(\msm)$ respectively. For
$f \in \mrc^k(\msm)$, we denote by $\diffD^kf$ the $k^{\text{th}}$
differential of $f$. For all function $f : \msm \to \rset$, we denote
by $\nabla f$ and $\nabla^2 f$ the gradient and the Hessian of $f$
respectively, if they exist.

We denote by $\mcp(\msm)$ the set of probability
measures on $\msm$. For $\mu,\nu \in \mcp(\msm)$,
$\xi \in \mcp(\msm^{2})$ is called a transference plan between $\mu$
and $\nu$ if for all $\msa \in \mcb{\msm}$,
$\xi( \msa \times \msm) = \mu(\msa)$ and
$\xi(\msm \times \msa) = \nu(\msa)$. The set of transference plan
between $\mu$ and $\nu$ is denoted by $\coupling{\mu}{\nu}$.  The random
variables $X$ and $Y$ on $\msm$ are a coupling between $\mu$ and $\nu$
if the distribution of $(X,Y)$ belongs to $ \coupling{\mu}{\nu}$.  The
total variation norm between $\mu$ and $\nu$ is defined by
\begin{equation*}
  \tvnorm{\mu-\nu} = 2 \inf_{\xi \in \coupling{\mu}{\nu}} \int_{\msm^2} \1_{\tcr{ \Delta_\msm^c}} (x,y) \, \rmd \xi(x,y) \eqsp, 
\end{equation*}
where $\Delta_\msm = \ensemble{(x,y) \in \msm^2}{x=y}$ \tcr{and $\Delta_\msm^c = \msm^2 \setminus \Delta_\msm$ is its complement}. 
For all $\mu \in \mcp(\msm)$, define the support of $\mu$ by
\begin{equation*}
  \supp \, \mu = \adh{\ensemble{x \in \msm}{ \text{ for all open set } \msu \ni x,\,  \mu(\msu) >0}} \eqsp.
\end{equation*}

In the sequel, we take the convention that $\inf \emptyset = \plusinfty$. All the random variables
considered in this paper are defined on a fixed probability space
$(\Omega,\mcf,\mathbb{P})$.


\section{A first definition of Piecewise Deterministic Markov Processes}
\label{sec:framework}

\subsection*{Definitions and further notation}$~$\\
Let $(\msm,\mfg)$ be a smooth closed Riemannian sub-manifold of $\rset^N$. A PDMP on $\msm$ is defined using a triple \tcr{$(\varphi, (\lambda_i,Q_i)_{i\in\iint{1,\ell}})$}, $\ell \in \nset^*$, referred to as the local characteristics of a PDMP, where 
\begin{itemize}
\item $\varphi $ is a differential flow on $\msm$:
  $\varphi:(t,h,x)\mapsto \varphi_{t,t+h}(x)$ is a measurable function
  from $\rset_+ \times \rset_+\times \msm$ to $\msm$, such that for
  all $t,h_1,h_2\geqslant 0$
  $\varphi_{t+h_1,t+h_1+h_2} \circ \varphi_{t,t+h_1} =
  \varphi_{t,t+h_1+h_2}$, $\varphi_{t,t} = \Id$. Moreover, for all
  $(t,x)\in\rset_+\times \msm$,
  $\tilde h\mapsto \varphi_{t,t+\tilde h}(x)$ is continuously
  differentiable from $\rset_+$ to $\msm$ and for all
  $t,h \in \rset_+$, $\tilde x \mapsto \varphi_{t,t+h}(\tilde x)$ is a
  $\rmc^1$-diffeomorphism of $\msm$. The flow $\varphi$ is
  (time-)homogeneous if for all $t,h \in \rset_+$,
  $\varphi_{t,t+h} = \varphi_{0,h}$, in which case we set
  $\varphi_h = \varphi_{0,h}$.
\item For all $i \in \iint{1,\ell}$, $\lambda_i: \rset_+ \times \msm \to \rset_+$ is a measurable function referred to as a jump rate on $\msm$ which is locally bounded, in the sense that $\norm{\lambda_i}_{\infty,\msk}<\infty$ for all compact $\msk\subset \rset_+\times\msm$. The jump rate $\lambda_i$ is (time)-homogeneous if it does
  not depend on $t$.
\item For all $i \in \iint{1,\ell}$, $Q_i : \rset_+ \times \msm \times \mcb{\msm} \to \ccint{0,1}$ is an inhomogeneous  Markov kernel on $\msm$: for all $\msa \in \mcb{\msm}$, $(t,x) \mapsto Q_i(t,x,\msa)$ is measurable, and for all $(t,x)\in \rset_+\times \msm$, $Q_i(t,x,\cdot) \in \mcp(\msm)$. The Markov kernel $Q_i$ is (time-)homogeneous if it does
  not depend on $t$.
\end{itemize}

\tcr{In the case $\ell = 1$, the local characteristics $(\varphi, (\lambda_1,Q_1)_{i\in\iint{1}1})$ are  denoted by $(\varphi, \lambda_1, Q_1)$.}

If $\varphi$ is a homogeneous differential flow and, for all
$i \in \iint{1,\ell}$, $\lambda_i,Q_i$ are homogeneous as well, the
local characteristics $(\varphi, (\lambda_i,Q_i)_{i\in\iint{1,\ell}})$
are said to be homogeneous. A (homogeneous) jump mechanism on $\msm$ is
a pair $(\lambda,Q)$ constituted of a (homogeneous) jump rate and a
(homogeneous) Markov kernel on $\msm$. 

\subsection*{A first construction of a PDMP}
For all $i \in \iint{1,\ell}$, consider a representation $\bfG_i$ of the Markov kernel $Q_i$, \ie~a measurable function $\bfG_i:t,x,u \mapsto \bfG(t,x,u)$ from $\rset_+\times \msm\times [0,1]$ to $\msm$ such that for all $(t,x,\msa) \in \rset_+\times \msm\times \mcb{\msm}$, $Q_i(t,x,\msa) = \mathbb P (\bfG_i(t,x,U) \in \msa)$, where $U$ is a random variable uniformly distributed on $[0,1]$. By \cite[Corollary 7.16.1]{bertskekas:shreve:1978}, such a representation always exists. 

Then, a PDMP $(X_t)_{t \geq 0}$ based on the local characteristics
$(\varphi,(\lambda_i,Q_i)_{i \in\iint{1,\ell}})$ and the initial
distribution $\mu_0$ can be defined
recursively through a Markov chain $(X_{k}',S_k)_{k\in \mathbb N}$ on
$(\msm\cup\{\infty\})\times (\rset_+\cup\{+\infty\})$. For all
$k \in \nset$, $X'_k$ will be the state of the process
$(X_t)_{t \geq 0}$ at times $S_k$. Between two times $S_k$ and
$S_{k+1}$, $(X_t)_{t \geq 0}$ will be a deterministic function of $X'_k$ and $S_k$.
More precisely, consider the following construction. 

\noindent\hfil\rule{\textwidth}{.4pt}\hfil
\begin{construction}
\label{const:1}
Let $W_0$ be a \rv~with distribution $\mu_0 \in \mcp(\msm)$ and $((E_{j,k})_{j\in\iintLigne{1,\ell}},U_k)_{k\in \nsets}$ be  an i.i.d. sequence, independent of $W_0$, such that for all $k\in \nsets$ and $j\in \iintLigne{1,\ell}$, $U_k$ is uniformly distributed on $[0,1]$ and $E_{j,k}$ is an exponential \rv~with parameter $1$, independent of $U_k$ and from $E_{i,k}$ for $i\neq j$. Recall that $\infty \notin M$ is a cemetery point.\\
  Set $S_0 = 0$, $X_0' = W_0$, $I_0 = 1$ and
suppose that $(X_{k}',S_k,I_k)$ and $(X_t)_{t \leq S_k}$ have been defined for some
$k\in \mathbb N$, with $X_{k}'\in\msm$ and $S_k\in \rset_+$. For all
$j \in \iintLigne{1,\ell}$, set
\begin{equation}
  \label{eq:def_s_j_const_1}
  S_{j,k+1}  =  \inf \ensemble{ t\geq S_k }{\ E_{j,k+1} < \int_{S_k}^t \lambda_j\po s, \varphi_{S_k,s}(X_{k}')\pf \rmd s }\eqsp, \, 
S_{k+1}  =  \underset{j\in\iintLigne{1,\ell}}\min S_{j,k+1}\eqsp.
\end{equation}
\begin{itemize}
\item If $S_{k+1} = +\infty$, set $S_m=+\infty$, $X_{m}' = \infty$, $I_m = 1$ for all $m>k$, and $X_t =  \varphi_{S_k,t}(X'_k)$ for all $t \geq S_k$. 
\item  If $S_{k+1} < +\infty$, set  
\begin{equation*}
I_{k+1}  = \min \{j\in \iintLigne{1,\ell},\ S_{j,k+1} = S_{k+1}\}\eqsp, \, X_{k+1}'  =  \bfG_{I_{k+1}}\po S_{k+1}, \varphi_{S_k,S_{k+1}}(X_{k}'),U_{k+1}\pf\eqsp.
\end{equation*}
\tcr{For $t \in \ooint{S_k,S_{k+1}}$, set $X_t =  \varphi_{S_k,t}(X'_k)$ and $X_{S_{k+1}} =X'_{k+1}$.}
\end{itemize}  
For $t \geq \sup_{k \in \nset} S_k$, set $X_t = \infty$.
\end{construction}
\noindent\hfil\rule{\textwidth}{.4pt}\hfil
\bigskip

Define for any $j \in\iint{1,\ell}$, given $X_k, S_k$,
$\psi_{j,k}(t) = \int_{S_k}^{t+S_k} \lambda_j
(s,\varphi_{S_k,s}(X_k')) \rmd s$ and $\psi_{j,k}^{\leftarrow}$ its generalized inverse which is defined since $\psi_{j,k}$ is non-decreasing.  Note that by \Cref{const:1},  for $k \in \nset$,
$i,j \in \iintLigne{1,\ell}$, $i\neq j$, using properties of the generalized inverse, since $\psi_{j,k}$ is continuous, we obtain 
 \begin{equation}
   \label{eq:event_eqal_S_k_zero}
   \proba{S_{j,k+1} = S_{i,k+1} = S_{k+1} < +\infty \, | \, X_k, S_k}=   \proba{E_{j,k+1} = \psi_{j,k}\circ \psi_{i,k}^{\leftarrow} ( E_{i,k+1}) \, , S_{k+1} < +\infty} =0 \eqsp.
 \end{equation}
However, the definition of
$I_{k+1}$ ensures that the process $(X_{k}',S_k)$ is defined not only
almost everywhere on $\Omega$, but in fact on all $\Omega$.

Let $(\mcf_{k}')_{k \in \nset}$ be the filtration associated with
$(X'_k,S_k,I_k)_{k \in \nset}$. Then, $(X'_k,S_k,I_k)_{k \in \nset}$ is an inhomogeneous
Markov chains since for all $k \in \nset$, $\msa \in \mcb{\msm}$,
$t \geq S_k$, $j\in \iint{1,\ell}$,
\begin{align}
  \nonumber
  \CPP{X'_{k+1}\in \msa, S_{k+1} \leq t, I_{k+1} = j}{\tcr{\mcf_k'}}{} 
  &= \1_{\msm}(X'_k) \int_{S_k}^{t}  Q_j(s,\varphi_{S_k,s}(X_k'),\msa)  \lambda_j(s,\varphi_{S_k,s}(X_k')) \\
    \label{eq:carac_chaine_markov_1}
 & \qquad \times  \exp\defEns{-\sum_{i=1}^\ell \int_{S_k}^s   \lambda_i(u,\varphi_{S_k,u}(X_k')) \rmd u} \rmd s \eqsp.
\end{align}
Note that the sequence $(X'_k,S_k)_{k \in \nset}$ is an
inhomogeneous Markov chain  as well, whose   kernel can be straightforwardly
deduced from \eqref{eq:carac_chaine_markov_1}.

 Then, $(X_t)_{t\geqslant 0}$ is a stochastic process on $\msm\cup\{\infty\}$, \ie~it is a \rv~from $(\Omega,\mcf,\mathbb{P})$ to the space
$\mrd(\rset_+,\msm \cup \{\infty\})$ of càdlàg functions from $\R_+$ to $\msm \cup \{\infty\}$, endowed with the Skorokhod
topology, see \cite[Chapter 6]{jacod:shiryaev:2003}. Moreover, $(X_t)_{t \geq 0}$ is a Markov process \cite[Theorem 7.3.1]{jacobsen:2006}, from the class of piecewise deterministic Markov processes (PDMPs). We say that a stochastic process $(\tilde{X}_t)_{t \geq 0}$ is a PDMP with local characteristics $( \varphi,(\lambda_i,Q_i)_{i\in \iintLigne{1,n}})$ and initial distribution $\mu_0$ if it has the same distribution on $\mrd(\rset_+,\msm \cup \{\infty\})$  as $(X_t)_{t \geq 0}$. We will denote by
$\mbox{PDMP}\left(\varphi,(\lambda_i,Q_i)_{i\in \iintLigne{1,\ell}}, \mu_0\right)$
this distribution.
In the sequel, we will see that a given  PDMP can admit several local characteristics. 
Note that, as $\varphi$ is a $\mbox{C}^1$-diffeomorphism,  $(X_t)_{t \geq 0}$ is completely determined by the Markov chain $(X'_k,S_k)_{k \in \nset}$, referred to as the embedded chain associated to the process. The sequence $(S_k)_{k \in \nset}$ is said to be the jump times of the process $(X_t)_{t \geq 0}$.
 A PDMP is said to be homogeneous if its local characteristics are (time) homogeneous. 

For  $(\tilde{X}_t)_{t \geq 0}\in \mrd(\rset_+,\msm \cup \{\infty\})$, we call $\tau_\infty(\tilde{X}) = \inf\{ t\geqslant 0\ :\ \tilde{X}_t = \infty\}$ the explosion time of the process $(\tilde{X}_t)_{t \geq 0}$. A process $(\tilde{X}_t)_{t \geq 0}$ is said to be non-explosive if $\tau_\infty(\tilde{X}) =+\infty$ almost surely. PDMP characteristics are said to be non-explosive if for all initial distribution the associated PDMP is non-explosive.

\Cref{const:1} associated with the characteristics
$(\varphi,(\lambda_i,Q_i)_{i \in \iint{1,\ell}}$ defines a Markov
semi-group $(P_{s,t})_{t\geqslant s\geqslant 0}$ for all $x\in \msm$,
$\msa\in \mcb{\msm}$ and $t\geqslant s \geq  0$ by
\begin{equation*}
  P_{s,t}(x,\msa)\ = \ \mathbb P( \bar{X}_{t-s}^{s,x} \in \msa) \eqsp,
\end{equation*}
where $(\bar{X}_u^{s,x})_{u \geq 0}$ is a PDMP with characteristics $((\varphi_{s+u,s+t})_{t\geq u \geq 0},(\lambda_i(s+ \cdot,\cdot),Q_i(s+\cdot,\cdot,\cdot))_{i \in \iint{1,\ell}})$ starting from $x$.
Its left-action on $\mrc(\msm)$ and right-action on $\mcp(\msm)$ are then given by
\[P_{s,t} f(x) = \expe{f(\bar{X}_t^{x,s})}\eqsp,\quad \nu P_{s,t}(\msa) = \int_{\msm} \PP(\bar{X}_t^{x,s}\in\msa) \rmd \nu (x) \eqsp,\]
 for all $f\in \mrb(\msm)$, $x\in \msm$, $\nu\in \mcp(\msm)$, $\msa\in  \mcb{\msm}$ and $t\geqslant s\geqslant 0$. The Markov property of $(X_t)_{t\geqslant 0}$ is equivalent to the semi-group property $P_{u,s}P_{s,t}=P_{u,t}$ for all $t\geqslant s\geqslant u \geqslant 0$. If $(\varphi,(\lambda_i,Q_i)_{i \iint{1,\ell}})$ is non explosive, then $P_{s,t}$ is a Markov kernel for all $t\geqslant s\geqslant 0$ and we say that $(P_{s,t})_{t \geq s \geq 0}$ is non explosive.  Otherwise, it is a sub-Markovian kernel  only. For a homogeneous process, we simply write $P_t = P_{0,t}$ for all $t \geq 0$.

 For a PDMP $(X_t)_{t\geqslant 0}$ with jump times
 $(S_k)_{k\in\mathbb N}$, we say that at time $S_{k+1}$, $k \geq 0$, a true jump
 occurred if $X_{S_{k+1}} \neq
 \varphi_{S_{k},S_{k+1}}(X_{S_k})$. Else, we say that at time $S_{k+1}$ a
 fantom jumped occurred. Note that, in the definition of homogeneous
 PDMPs with characteristics $(\varphi,\lambda,Q)$ given in
 \cite[standard conditions p. 62]{davis:1993}, fantom jumps are
 impossible, since it is assumed that for all $x \in \msm$,
 $Q(x,\{x\})=0$. This is not the case with the definition we gave in
 \Cref{sec:framework}, where the notion of jump times depends on the
 jump mechanisms used to define the process. We will see that in
 \Cref{sec:superposition} that under our settings, based on
 characteristics $(\varphi,(\lambda_i,Q_i)_{i \in \iint{1,\ell}})$
 which define a PDMP $(X_t)_{t \geq 0}$, we can always define some
 characteristics $(\varphi,\lambda,Q)$ which define a PDMP
 $(Z_t)_{t \geq 0}$ with the same distribution as $(X_t)_{t \geq 0}$ but no fantom jump.

 The condition imposed by
 \cite{davis:1993} implying that a PDMP has no fantom jump can be
 very useful since it allows a one-to-one correspondence between the path of the continuous-time process $(X_t)_{t \geq 0}$ and of its embedded chain $(X_k',S_k)_{k \in \nset}$. With our construction, the continuous process is completely determined by its embedded chain but not the opposite.  

 On the other hand, adding fantom jumps sometimes turns out to be convenient. Here is an example: let $(\varphi,\lambda,Q)$ be the characteristics of a PDMP $(X_t)_{t\geqslant 0}$, and suppose that there exists $\lambda_*>0$ such that $\lambda(t,x)\leqslant \lambda_*$ for all $t\geqslant 0$ and $x\in\msm$. From \Cref{eq:prop-superposition} below,  $(X_t)_{t\geqslant 0}$ has the same distribution as the PDMP $(Z_t)_{t\geqslant 0}$ obtained through \Cref{const:1} from the characteristics $(\varphi,\lambda_*,\tilde Q)$ with for all $(t,x,\msa) \in \rset_+ \times \msm \times \mcb{\msm}$,
\[\tilde Q(t,x,\msa) \ = \ \frac{\lambda(t,x)}{\lambda_*} Q(t,x,\msa) + \defEns{1 - \frac{\lambda(t,x)}{\lambda_*}} \updelta_x(\msa)\eqsp.\]
The jump times of $(Z_t)_{t\geqslant 0}$ are given by a Poisson
process with intensity $\lambda_*$. Adding fantom jumps to PDMPs
turns our to be very useful in practice for their simulation. This method is referred to as thinning (see
\cite{Thieullen2016} and references therein for more
details). \tcr{Note that fantom jumps and the link between the process
  and the embedded chain are also investigated for a different type of
  PDMPs in \cite{BLBMZ3}.}

 Another use of fantom jump is presented in \cite{Costa}. The stability or ergodicity of a PDMP  $(X_t)_{t\geqslant0}$ and of its embedded chain $(X_k',T_k)_{k\in\mathbb N}$ may differ, but this is no more the case if fantom jumps are added at constant rate, \ie~if we consider the PDMP with  characteristics $(\varphi,\lambda+1,\tilde Q)$, where $\tilde{Q}$ is given for all $(t,x,\msa) \in \rset_+ \times \msm \times \mcb{\msm}$ by
 \[\tilde Q(t,x,\msa) \ = \ \frac{\lambda(t,x)}{1+\lambda(t,x)} Q(t,x,\msa) + \frac{1}{1+\lambda(t,x)} \updelta_x(\msa)\]
 and its embedded chain. See \cite{Costa} for more details.

 There are other differences between the assumptions we consider on
 the characteristics of a PDMP and those in \cite[standard conditions
 p. 62]{davis:1993}. First, in contrast to \cite{davis:1993}, we
 consider that the differential flow $(\varphi_{t})_{t \geq 0}$ is
 defined on $\rset_+$. In addition, the jump rates
 $(\lambda_i)_{\i \in \iint{1,\ell}}$ are supposed to be locally
 bounded to prevent some artificial pathological behavior such as an
 infinite number of fantom jumps in a finite time. However, the following weaker condition would have been 
 sufficient to define $(X_t)_{t\geqslant0}$: for all
 $(t,x) \in \rset_+\times \msm$, there exists $h >0$ such that
 $\int_{t}^{t+h} \lambda(s,\varphi_{t,s}(x)) \rmd s < \plusinfty$.  On
 the other hand, we don't assume a priori that PDMPs are
 non-explosive.
 
 \subsection*{Examples}$~$\\
 Several examples of PDMP can be found in \cite{MalrieuPDMP} and
 references therein. In the present paper, a particular attention will be
 paid to the family of velocity jump PDMP, described as follows. Let
 $\msv \subset \rset^d$ be a smooth complete Riemannian submanifold,
 and set $\msm = \rset^d\times \msv$. Then, $\msm$ is a smooth
 complete Riemannian submanifold of $\rset^{2d}$ endowed with the
 canonical Euclidean distance and tensor metric. We say that a PDMP $(X_t,Y_t)_{t\geqslant0}$ on $\msm$
 (where $X_t \in \rset^d$ and $Y_t\in \msv$ for all $t\geqslant 0$)
 with characteristics
 $( \varphi,(\lambda_i,Q_i)_{i\in \iintLigne{1,\ell}})$ is a velocity
 jump PDMP if $\varphi$ is homogeneous and given for any
 $t \in \rset_+$ and $(x,v) \in \rset_+ \times \msv$ by
 \begin{equation}
   \label{eq:transport_libre}
   \varphi_t(x,y)= (x+ty,y) 
 \end{equation}
 and if for all $i\in \iintLigne{1,\ell}$, all $\msa\in  \mcb{\rset^d}$ and all $(t,x,y)\in\rset_+\times\rset^d\times \msv$,
 \[Q_i(t,(x,y),\msa\times\msv)\ = \ \updelta_x(\msa)\eqsp.\]

 Consider the PDMP $(X_t,Y_t)_{t \geq 0}$ associated with this choice
 of characteristics and $(X'_k,Y_k',S_k)_{k \geq 0}$ the
 corresponding embedded chain. Note that by construction for all $t\in \coint{S_k,S_{k+1}}$, $k \in \nset$, $X_t = X'_k + (t-S_k) Y'_k$ and $Y_t = Y'_k$. Therefore for all $t < \sup_{k \in \nset} S_k$, $X_t = \int_0^t Y_s \rmd s$ and only $(Y_t)_{t \geq 0}$ can be discontinuous in time. 
 
 The class of velocity jump processes gathers the Zig-Zag process \cite{BierkensFearnheadRoberts}, the Bouncy Particle Sampler (BPS) \cite{PetersdeWith} and many of their variants. \tcr{The choice for $(\lambda_i,Q_i)$, $i \in \{1,\ldots,\ell\}$, of these different (but similar) processes are mainly of one of the following type (here we only consider homogeneous mechanisms):}
 \begin{itemize}
 \item refreshment mechanism: the rate $\lambda(x,y)$ only depends on $x\in \rset^d$, and the kernel $Q$ is constant, \ie~there exists $\nu \in \mcp(\msv)$ such that for all $(x,y)\in \rset^d\times \msv$ and all $(\msa,\msa')\in  \mcb{\msm}\times  \mcb{\msv}$
   \[Q((x,y),\msa\times\msa')\ = \ \updelta_x(\msa) \nu(\msa')\eqsp.\]
 \item deterministic bounce mechanism: \tcr{there exists a measurable function $g:\rset^d \rightarrow \rset^d$, locally bounded, such that for all $(x,y) \in \msm$, $\lambda(x,y) = \ps{g(x)}{y}_+$ and $Q((x,y), \{x\} \times \{\Refl(x,y)\}) = 1 $, for a measurable function $\Refl : \msm \to \msy$. A particular example in the case $\msy = \sphere^d$ or $\msy= \rset^d$ and  $\Refl$
 is given for all $(x,y) \in \msm$ by}
   \begin{equation}
     \label{eq:def_refl}
     \Refl(x,y) \ = \
     \begin{cases}
       y - 2  \norm{g(x)}^{-2 }\ps{g(x)}{y} g(x)  & \text{ if $g(x) \not = 0$} \eqsp,\\
       y & \text{ otherwise} \eqsp.
     \end{cases}
   \end{equation}
   Note that $\Refl(x,y)$ is simply the orthogonal reflection of $y$ with respect to $g(x)$ if $g(x) \neq 0$.
 \item randomized bounce mechanism: there exists a measurable function $g:\rset^d \rightarrow \rset^d$ such that for all $(x,y) \in \msm$ and $\msa \in \mcb{\rset^d}$, $\msa' \in \mcb{\msy}$, $\lambda(x,y) = \ps{g(x)}{y}_+$ and $Q((x,y), \msa \times \msa') = \updelta_{x}(\msa) \tilde{Q}((x,y),\msa')$, where $\tilde Q$ is a Markov kernel on $\msm \times \mcb{\msy}$.
 \end{itemize}

 For instance, \cite{BDMM2017} studies the velocity jump process  associated with the linear Boltzmann equation, which gives an exemple of refreshment mechanism. The Zig-Zag (ZZ) process  \cite{BierkensFearnheadRoberts}  and the Bouncy Particle Sampler (BPS) \cite{PetersdeWith,MonmarcheRTP,Doucet2017} are recently proposed PDMP used to sample from a target density $\pi \propto \exp(-U)$, where $U: \rset^d \to \rset$ is a continuously differentiable function.
 The ZZ process is a velocity jump process with $\msy = \{-1,1\}^d$ and $d$ deterministic bounce mechanisms $(\lambda_i,Q_i)_{i \in \iint{1,d}}$ given for all $i \in \iint{1,d}$, $x \in \rset^d$, $y \in \{-1,1\}^d$ and $\msa \in \mcb{\rset^d}$ by
 \begin{equation*}
   \lambda_i(x,y) = (y_i \partial U(x) /\partial x_i)_+ \eqsp, \qquad Q_i((x,y),\{ x \} \times \{(y_1,\ldots,y_{i-1},-y_i,y_{i+1},\ldots,y_d)\}) = 1 \eqsp.
 \end{equation*}
 Note that in this case, for all $x \in \rset^d$,  $g_i(x) = (\partial U(x)/ \partial x_i) \bfe_i$ where $\bfe_i$ is the $i^{\text{th}}$ vector of the standard basis of $\rset^d$. Additional refreshment mechanisms can be added to the process. 
In the rest of this paper, we will repeatedly use the BPS process as an illustration to our different results.

\begin{example}\label{ex:BPS}
Let $\msv$ be a smooth closed sub-manifold of $\rset^d$ rotation invariant, \ie~for any rotation $O$ of $\rset^d$, $O \msv = \msv$.  Let $\rate>0$ and $\loiy \in \mcp(\msv)$. The BPS process associated with the potential $U$, refreshment rate $\rate$ and refreshment distribution $\loiy$ is the PDMP on $\msm = \rset^d\times \msv$ with characteristics $( \varphi,(\lambda_i,Q_i)_{i\in \iintLigne{1,2}})$ where $\varphi$ is given by \eqref{eq:transport_libre} and for all $(x,y)\in\rset^{d}\times\msv$, $\msa \in \mcb{\rset^d}$, $\lambda_1(x,y) = \ps{y}{\nabla U(x)}_+$, $\lambda_2(x,y) = \rate$, $Q_1((x,y),\msa \times\{\Refl(x,y)\}) = \updelta_x(\msa)$ and $Q_2((x,y),\cdot) = \updelta_x \otimes \loiy$, where $\Refl$ is given by \eqref{eq:def_refl} with $g(x)=\nabla U(x)$. Note that $(\lambda_1,Q_1)$ is the pure bounce mechanism associated with $g$ (see Figure~\ref{fig:BPS}) and  $(\lambda_2,Q_2)$ is a refreshment mechanism.
\end{example}
Variants of the BPS with randomized bounces have been recently introduced in \cite{michel:senecal:2017,wu:robert:2017,vanetti:etal:2017}.
\begin{center}
  \begin{figure}\begin{tikzpicture}[line cap=round,line join=round,>=triangle 45,x=1.2cm,y=1.2cm]
\clip(0.46,0.49) rectangle (9.36,6.42);
\draw [rotate around={22.34:(3.83,2.62)},dash pattern=on 1pt off 1pt] (3.83,2.62) ellipse (1.1cm and 0.56cm);
\draw [rotate around={-176.09:(3.72,2.57)},dash pattern=on 3pt off 3pt] (3.72,2.57) ellipse (3.32cm and 1.48cm);
\draw [rotate around={-169.9:(3.74,2.65)},dash pattern=on 3pt off 3pt] (3.74,2.65) ellipse (1.93cm and 1.03cm);
\draw [rotate around={-166.74:(4.51,2.83)},dash pattern=on 3pt off 3pt] (4.51,2.83) ellipse (4.74cm and 2.44cm);
\draw [color=fftttt] (5.14,4.48)-- (7.08,4.82);
\draw [color=fftttt] (8.5,3.45)-- (7.08,4.82);
\draw (5.36,5.08) node[anchor=north west] {$ (Y_t)_{t \leq S_k} $};
\draw (7.0,4.18) node[anchor=north west] {$ (Y_t)_{t >S_k} $};
\draw (6.60,5.36) node[anchor=north west] {$X_{S_k}$};
\draw (7.39,5.83) node[anchor=north west] {$ \nabla U(X_{S_k}) $};
\draw (3.5,5.55) node[anchor=north west] {$\{ \nabla U(X_{S_k})\}^{\perp} $};
\draw [color=wwwwff] (7.56,6.37)-- (6.54,3.05);
\draw [color=wwwwff] (2.63,6.18)-- (9.24,4.16);
\begin{scriptsize}
\fill [color=qqqqff] (7.08,4.82) circle (1.5pt);
\end{scriptsize}
\end{tikzpicture}
\caption{\tcr{Representation of a deterministic bounce in the case $g=\nabla U$ for some potential $U$. The dashed lines are level sets of $U$. At the jump time $S_k$, $V_{S_k}$ is reflected with respect to $\nabla U(X_{S_k})$.}}\label{fig:BPS}
\end{figure}
\end{center}


\section{Alternative constructions}
\label{sec:alternative-constructions}

Consider PDMP characteristics
$( \varphi,(\lambda_i,Q_i)_{i\in \iintLigne{1,\ell}})$, an initial
distribution $\mu_0 \in \mcp(\msm)$ and the associated process
$(X_t)_{t \geq 0}$ defined in \Cref{sec:framework}. The goal of this
Section is to construct another process $(Y_t)_{t \geq 0}$ on the same
probability space $(\Omega,\mcf,\mathbb{P})$ with the same
distribution on $\mrd(\rset_+,\msm \cup \{\infty\})$ as
$(X_t)_{t \geq 0}$. \tcr{This alternative construction will turn out
  to be interesting to obtain a characterization of the distribution
  of the first jump time of type $i \in \{1,\ldots,\ell\}$; see
  \Cref{prop:premier_bounce} below. The main difference between the
  two constructions is the following. In \Cref{const:1}, the sequence
  $(E_{j,k+1})_{j\in \iintLigne{1,\ell}}$ is only used after the
  $k^{\mathrm{th}}$ jump to define $S_{k+1}$, and they
  are all discarded afterwards. In contrast, \Cref{const:2} takes
  advantage of the memoryless property of the exponential
  distribution: the random variables
  $(\tilde{H}_{j,k+1})_{j\in \iintLigne{1,\ell}}$ which play the same
  role than $(E_{j,k+1})_{j\in \iintLigne{1,\ell}}$ to define the jump
  times $S_{k+1}$ are simply updated so that they are
  \iid~exponential random variables independent of $\mcf'_{k}$ (see \Cref{lem:prop_bH}), which
  is the main reason why \Cref{const:2} defined a process with
  the same distribution than \Cref{const:1} (see \Cref{propo:eg_deux_proc_1}). 
}

\noindent\hfil\rule{\textwidth}{.4pt}\hfil
\begin{construction}
\label{const:2}
 Let $W_0$ be a \rv~ with distribution $\mu_0 \in \mcp(\msm)$ and $(E_{j,k},U_{j,k})_{j\in\iintLigne{1,\ell},k\in \nsets}$ be  an i.i.d. family, independent of $W_0$, such that for all $k\in \nset$ and $j\in \iintLigne{1,\ell}$, $U_{j,k}$ is uniformly distributed on $[0,1]$  and $E_{j,k}$ is an exponential \rv~ with parameter $1$, independent of $U_{j,k}$.\\
  Set $\tS_0 = 0$, $Y_0' = W_0$, $\tI_0=0$, $\bN_{0} = 1$, $\tH_{j,1} = E_{j,1}$ and $\tN_{j,1} = 1$, for $j\in \cco 1,\ell\ccf$. Suppose that $(Y_t)_{t \leq \S_k}$ and  $(Y_{k}',\tS_k,\tI_k,\bN_{k},(\tH_{j,k+1},\tN_{j,k+1})_{j \in \iint{1,\ell}})$ have been defined for some $k\in \mathbb N$. Set 
\begin{equation*}
\tS_{j,k+1}   =  \inf \ensemble{ t>\tS_k}{ \tH_{j,k+1} < \int_{\tS_k}^t \lambda_j\parentheseLigne{s, \varphi_{\tS_k,s}(Y_{k}')} \rmd s }\eqsp, \, \tS_{k+1}  =  \underset{j\in\iint{1,\ell}}\min \tS_{j,k+1} \eqsp.
\end{equation*}
\begin{itemize}
\item If $\tS_{k+1} = +\infty$, set $\tS_m=+\infty$, $Y'_{m} = \infty$, $\tI_m = 1$ for all $m>k$ and $Y_t = \varphi_{\tS_k,t}(Y'_k)$ for $t \geq \tS_k$.
\item  If $\tS_{k+1} < +\infty$, set
  \[
    \begin{array}{rclcrcl}
      \tI_{k+1} & =  & \min \{j\in \cco 1,\ell \ccf,\ \tS_{j,k+1} = \tS_{k+1}\} &\eqsp, \, & \bN_{k+1} =  \tN_{\tI_{k+1},k+2}&  =&  \tN_{\tI_{k+1},k+1}+1 \eqsp, \\
      Y_{k+1}' &= &
\bfG_{\tI_{k+1}} (  \tS_{k+1}, \varphi_{\tS_k,\tS_{k+1}}(Y_{k}'),U_{\tI_{k+1},\bN_{k+1}})
 & \eqsp, \, & \tH_{\tI_{k+1},k+2}  &= &  E_{\tI_{k+1},\bN_{k+1}} \eqsp, 
  \end{array}
\]
and for $j\neq \tI_{k+1}$,
\begin{equation*}
  \tH_{j,k+2}  =  \tH_{j,k+1} - \int_{\tS_k}^{\tS_{k+1}} \lambda_j\parentheseLigne{s, \varphi_{\tS_k,s}(Y_{k}')} \rmd s \eqsp, \quad \tN_{j,k+2} = \tN_{j,k+1} \eqsp. 
\end{equation*}
Set $Y_t =  \varphi_{\tS_k,t}(Y'_k)$ for $t \in \oointLigne{\tS_k,\tS_{k+1}}$ and  $Y_{\tS_{k+1}} = Y'_{k+1}$.
\end{itemize}
For $t \geq \sup_{k \in \nset} \tS_k$, set $Y_t = \infty$.
\end{construction}
\noindent\hfil\rule{\textwidth}{.4pt}\hfil
\bigskip

We show in the following result that the two constructions we consider define the same distribution on $\rmD(\rset_+,\msm\cup\{\infty\})$.
 \begin{proposition}
   \label{propo:eg_deux_proc_1}
The two Markov chains $(X'_k,S_k,I_k)_{k \in \nset}$ and
   $(Y'_k,\tS_k,\tI_k)_{k \in \nset}$ have the same distribution on
   $((\msm\cup\{\infty\}) \times (\rset_+\cup\{+\infty\}))^{\nset}$. Therefore, $(X_t)_{t \geq 0}$ and $(Y_t)_{t \geq 0}$ have the same distribution on $\mrd(\rset_+,\msm\cup\{\infty\})$.
 \end{proposition}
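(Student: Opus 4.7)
The plan is to prove by induction on $k$ that the embedded chains $(X'_k,S_k,I_k)_{k\in\nset}$ and $(Y'_k,\tS_k,\tI_k)_{k\in\nset}$ have the same law on $((\msm\cup\{\infty\})\times(\rset_+\cup\{+\infty\}))^{\nset}$. Since in both constructions the continuous-time process is obtained from its embedded chain by the \emph{same} deterministic flow $\varphi$ (and the cemetery convention is identical), equality of the embedded laws immediately yields equality of the laws on $\mrd(\rset_+,\msm\cup\{\infty\})$. By \Cref{eq:carac_chaine_markov_1}, which applies verbatim to Construction 1, it suffices to check that, on $\{\tS_k<+\infty\}$, the one-step conditional law of $(Y'_{k+1},\tS_{k+1},\tI_{k+1})$ given the past $\mcg_k=\sigma(Y'_0,\tS_0,\tI_0,\dots,Y'_k,\tS_k,\tI_k)$ satisfies the same formula.

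The crux is the following auxiliary claim, which I would prove by a secondary induction on $k$: on $\{\tS_k<+\infty\}$, conditionally on $\mcg_k$, the residual clocks $(\tH_{j,k+1})_{j\in\iintLigne{1,\ell}}$ are i.i.d.\ with $\operatorname{Exp}(1)$ distribution, and are independent of the unused part of the driving family $(E_{j,m},U_{j,m})_{j\in\iintLigne{1,\ell},\, m>\tN_{j,k+1}}$. The engine is the memoryless property of the exponential: if $E\sim\operatorname{Exp}(1)$ and $a\geq 0$ is $\mcg_k$-measurable, then on $\{E>a\}$ the variable $E-a$ is $\operatorname{Exp}(1)$ and independent of $\mcg_k$. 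For $j=\tI_k$, the value $\tH_{j,k+1}$ is a fresh draw $E_{j,\tN_{j,k+1}}$ from the pool, hence exponential by construction. For $j\neq\tI_k$, one has
\[
\tH_{j,k+1}\ =\ \tH_{j,k}-\int_{\tS_{k-1}}^{\tS_k}\lambda_j\parenthese{s,\varphi_{\tS_{k-1},s}(Y'_{k-1})}\rmd s,
\]
and the event $\{j\neq\tI_k\}$ is precisely $\{\tH_{j,k}>\int_{\tS_{k-1}}^{\tS_k}\lambda_j(\ldots)\rmd s\}$; applying memorylessness together with the inductive hypothesis on $(\tH_{j,k})_j$ gives conditional independence and the $\operatorname{Exp}(1)$ distribution at level $k+1$. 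The remaining residuals for $j\notin\{\tI_k\}$ that were never consumed retain their joint law by the same token.

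Granted the claim, the one-step conditional law follows by a direct computation: for $\msa\in\mcb{\msm}$, $t\geq\tS_k$, and $j\in\iintLigne{1,\ell}$,
\[
\CPP{Y'_{k+1}\in\msa,\ \tS_{k+1}\leq t,\ \tI_{k+1}=j}{\mcg_k}{}
\]
is obtained by integrating the joint density of the conditionally i.i.d.\ exponentials $(\tH_{i,k+1})_{i\in\iintLigne{1,\ell}}$ against the event that clock $j$ fires first before time $t$, and using that $Y'_{k+1}=\bfG_j(\tS_{k+1},\varphi_{\tS_k,\tS_{k+1}}(Y'_k),U_{j,\tN_{j,k+1}})$ with $U_{j,\tN_{j,k+1}}$ uniform on $[0,1]$ and independent of everything else. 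A standard rearrangement of the resulting integral (writing the survival probability of clock $i\neq j$ as the exponential of minus the integrated rate) produces exactly the right-hand side of \Cref{eq:carac_chaine_markov_1}, closing the main induction.

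The main obstacle is the auxiliary claim, because the identity of the firing index $\tI_k$ dictates which residual is refreshed and which is carried over, so that the joint structure of $(\tH_{j,k+1})_{j}$ is entangled with $\tI_k$. I would handle this by partitioning $\mcg_k$ on $\{\tI_k=i_0\}$ for each $i_0\in\iintLigne{1,\ell}$ and verifying the conditional density of $(\tH_{j,k+1})_j$ separately on each such event before aggregating. The degenerate case $\tS_k=+\infty$ is trivial: both chains stay in the cemetery and the induction is preserved.
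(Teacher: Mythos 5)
Your proposal is correct and follows essentially the same route as the paper: reduce to matching the one-step transition kernel \eqref{eq:carac_chaine_markov_1}, and establish as the key auxiliary claim (\Cref{lem:prop_bH} in the paper) that, by a secondary induction using the memoryless property of the exponential distribution, the residual clocks $(\tH_{j,k+1})_{j\in\iintLigne{1,\ell}}$ are conditionally i.i.d.\ $\operatorname{Exp}(1)$ given the past, with the firing index handled by partitioning on $\{\tI_k=i\}$. The only cosmetic difference is that you bundle the two conditional statements of the paper's lemma (one with respect to $\tmcf'_k$, one further conditioned on $\tI_k=i$) into a single claim relative to a filtration that already contains $\tI_k$, but the underlying argument is identical.
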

 We preface the proof by a lemma.
 Denote by $(\tmcf_{k})_{k\in \nset}$ and $(\tmcf_{k}')_{k\in \nset}$ the
filtration associated with the sequence of random variables $(Y_{k}',\tS_k,\tI_k)_{k \in \nset}$ and $(Y_{k}',\tS_k)_{k \in \nset}$.  

\begin{lemma}
  \label{lem:prop_bH}
Let $k \in \nset$. For all $i \in \iint{1,\ell}$, given
  $\{\tI_{k} = i \}\cap \{\tS_{k} < \plusinfty\}$,
  $(\tH_{j,k+1})_{j \in \iint{1,\ell}\setminus\{i\}}$
  are
  \iid~exponential random variables with parameter $1$, independent of
  $\tmcf'_{k}$.   In
  addition, given $\{\tS_k < \plusinfty\}$,
  $(\tH_{j,k+1})_{j \in \iint{1,\ell}}$ are \iid~exponential random
  variables with parameter $1$, independent of $\tmcf'_{k}$. Finally, $\probaLigne{\tS_{j,k+1}  =\tS_{j',k+1}  < \plusinfty | \tmcf_k'} = 0$ for any $j,j' \in\iint{1,\ell}$. 
\end{lemma}
\begin{proof}
  First, it is sufficient to consider the case $\ell >1$, since in the
  case $\ell = 1$, $H_{1,k+1} = \bE_{1,k+1}$ for any $k\in\nset$.

  Note that the first statement is equivalent to for any
  $i \in \iint{1,\ell}$,
  $(t_j)_{j \in\iint{1,\ell}\setminus\{i\}} \in \rset_+^{\ell-1}$,
  $\msb \in \tmcf_k'$, setting $\msa_{i,k} = \bigcap_{j=1,j\not= i}^{\ell} \{\tH_{j,k+1} \geq t_j \}$,
  \begin{equation*}
\textstyle{    \expe{  \1_{\rset_+}(\tS_{k}) \1_{i}(\tI_{k}) \1_{\msa_{k,i}} \1_{\msb}} = \exp\parentheseLigne{-\sum_{j\in\iint{1,\ell}\setminus\{i\}} t_j} \expe{  \1_{\rset_+}(\tS_{k}) \1_{i}(\tI_{k})  \1_{\msb}} } \eqsp.
  \end{equation*}
  Indeed taking $\msb = \Omega$, we get
  $\probaLigne{\msa_{k,i} \, |\, \tI_{k} = i, \tS_k < \plusinfty} =
  \exp\parentheseLigne{-\sum_{j\in\iint{1,\ell}} t_j}$. This statement
  is also equivalent to
  \begin{equation}
    \label{proof:lem:prop_bH_eq_2}
\textstyle{    \1_{\rset_+}(\tS_{k}) \CPPLigne{\bigcap_{j=1,j\not= i}^{\ell} \{\tH_{j,k+1} \geq t_j\}\cap \{\tI_{k} = i \} }{\tmcf'_{k}}  = \exp\parentheseLigne{-\sum_{j\in\iint{1,\ell}\setminus\{i\}} t_j} \1_{\rset_+}(\tS_{k}) \CPPLigne{\tI_k = i}{\tmcf'_{k}} } \eqsp,
\end{equation}
which is statement that we show. Similarly, we show that for the second statement, it is sufficient to show that $ \1_{\rset_+}(\tS_{k}) \probaLigne{\bigcap_{j=1}^{\ell} \{\tH_{j,k+1} \geq t_j\} \, |\, \tmcf'_{k}}  = \exp\parentheseLigne{-\sum_{j\in\iint{1,\ell}} t_j}$ for any $(t_j)_{j\in\iint{1,\ell}} \in \rset_+^{\ell}$.


The proof is by induction on $k \in \nsets$. For $k=0$, by definition
the first two statements hold. The last one follows from
\eqref{eq:def_s_j_const_1} for $k=0$ and \Cref{const:1},
\Cref{const:2}. 

  Assume now that the result holds for $k \in \nset$ and let
  $i \in \iint{1,\ell}$.

  First, we show that the last statement for $k+1$: for any 
  $j',j \in \iint{1,\ell}$,
  $\probaLigne{S_{j,k+1} = S_{j',k+1} < \plusinfty|\tmcf_k'} = 0$. Indeed, given
  $\tmcf_k'$, by definition $S_{j,k+1} = S_{j',k+1}$ if and only if
  $\psi_{j,k+1}^{\leftarrow}(H_{j,k+1}) =
  \psi_{j',k+1}^{\leftarrow}(H_{j',k+1})$, where
  $\psi_{j,k+1}^{\leftarrow}$ is the generalized inverse of the nondecreasing function $\psi_{j,k+1}(t) =  \int_{\tS_k}^{\tS_k+t} \lambda_j\parentheseLigne{s, \varphi_{\tS_k,s}(Y_{k}')} \rmd s$.
Since $\psi_{j',k+1}$ is continuous, we get using properties of the generalized inverse that given
  $\tmcf_k'$, $S_{j,k+1} = S_{j',k+1}$ if and only if
  $\psi_{j',k+1} \circ \psi_{j,k+1}^{\leftarrow}(H_{j,k+1}) =
  H_{j',k+1}$, which implies by the induction hypothesis, $\probaLigne{S_{j,k+1} = S_{j',k+1} \,|\, \tmcf_k'} = 0$. It follows that given $\tmcf_{k+1}'$ and $\{\tS_{k+1} < \plusinfty\}$, using again properties of the generalized inverse,
  \begin{multline}
    \label{proof:lem:prop_bH_eq_1}
    \{I_{k+1} = i\} = \{H_{i,k+1} = \psi_{i,k+1}(\tS_{i,k+1})\} \cap \defEns{\cap_{j\in\iint{1,\ell}\setminus\{i\}} \{\tS_{j,k+1} > \tS_{i,k+1}\}}\\ = \{H_{i,k+1} = \psi_{i,k+1}(\tS_{k+1} - \tS_k)\} \cap \defEns{\cap_{j\in\iint{1,\ell}\setminus\{i\}} \{H_{j,k+1} > B_{j,k+1}\}} \eqsp,
  \end{multline}
  where we have set $B_{j,k+1} = \1_{\rset_+} (\tS_{k+1})\int_{\tS_{k}}^{\tS_{k+1}}
  \defEnsLigne{\lambda_j(s, \varphi_{\tS_{k},s}(Y_{{k}}'))} \rmd s$.

  We are now able to show that the first two statements hold for $k+1$. Note that  almost surely, we have by \eqref{proof:lem:prop_bH_eq_1} and the induction hypothesis,
for
all $t_1,\ldots,t_\ell \geq 0$,
\begin{align*}
  &  \textstyle{ \1_{\rset_+}(\tS_{k+1}) \CPPLigne{\bigcap_{j=1,j\not= i}^{\ell} \{\tH_{j,k+2} \geq t_j\}\cap \{\tI_{k+1} = i \} }{\tmcf'_{k+1}} }\\
  &   \textstyle{ =\1_{\rset_+}(\tS_{k+1}) \CPPLigne{\{H_{i,k+1} = \psi_{i,k+1}(\tS_{k+1})\} \cap \bigcap_{j=1,j\not= i}^{\ell} \{\tH_{j,k+1}-B_{j,k+1} \geq t_j\}\cap\{H_{j,k+1} > B_{j,k+1}\} }{\tmcf_{k+1}'} }\\
  &    \textstyle{    = \1_{\rset_+}(\tS_{k+1})\exp\parentheseLigne{-\sum_{j=1,j \not = i}^{\ell} \{t_j+B_{j,k+1}\} } 
    \CPPLigne{\{H_{i,k+1} = \psi_{i,k+1}(\tS_{k+1})\}  }{\tmcf_{k+1}'} } \\
\end{align*}
establishing \eqref{proof:lem:prop_bH_eq_2} for $k+1$.
For the second statement, using \eqref{proof:lem:prop_bH_eq_2} for $k+1$ and that   $E_{i,\bN_{k+2}}$ is an exponential random variable independent of $\sigma(\tmcf'_{k+1},
  (\tH_{j,k+1})_{j \in \iint{1,\ell}})$ given $\{I_{k+1} = i\}$ for all $i \in \iint{1,\ell}$, we obtain that 
\begin{align*}
  &  \textstyle{\1_{\rset_+} (\tS_{k+1}) \CPPLigne{\bigcap_{j=1}^{\ell}\{\tH_{j,k+2} \geq t_j \}}{\tmcf'_{k+1}}} &\\
&   \qquad \qquad \textstyle{ =   \1_{\rset_+} (\tS_{k+1}) \sum_{i=1}^\ell  \CPPLigne{\bigcap_{j=1}^{\ell}\{\tH_{j,k+2} \geq t_j \} \cap\{ I_{k+1} = i\}}{\tmcf'_{k+1}} } \\
&\qquad \qquad \textstyle{=   \1_{\rset_+} (\tS_{k+1})   \sum_{i=1}^\ell  \CPPLigne{\bigcap_{j=1, j \not = i}^{\ell}\{\tH_{j,k+2} \geq t_j \} \cap\{I_{k+1} = i\} \cap \{\bE_{i,\bN_{k+2}} \geq t_i\}}{\tmcf'_{k+1}} }\\
  &\qquad  \qquad  \textstyle{= \1_{\rset_+} (\tS_{k+1})  \exp\parentheseLigne{-\sum_{j=1}^{\ell} t_j}  \sum_{i=1}^\ell  \CPPLigne{I_{k+1} = i}{\tmcf'_{k+1}}} = \1_{\rset_+} (\tS_{k+1})  \exp\parentheseLigne{-\sum_{j=1}^{\ell} t_j}\eqsp.
\end{align*}
\end{proof}

\begin{proof}[Proof of \Cref{propo:eg_deux_proc_1}]
  We show that the two processes $(X'_k,S_k,I_k)_{k \in \nset}$ and
  $(Y'_k,\tS_k,\tI_k)_{k \in \nset}$ have the same distribution.  Note
  that, since   $(X'_0,S_0,I_0)$ and $(Y'_0,\tS_0,\tI_0)$ have the same distribution,
  this result is equivalent  to show that
  $(Y'_k,\tS_k,\tI_k)_{k \in \nset}$ is also a Markov chain with a
  Markov kernel characterized by \eqref{eq:carac_chaine_markov_1}.
Let $k \in \nset$, $\msa \in \mcb{\msm}$, $t \geq \tS_k$,
$i\in \iint{1,\ell}$. Note first that given $\{I_{k+1} = i \}$ and $\tS_k$, for $i \in\iint{1,\ell}$, $\psi_{i,k+1}^{\leftarrow} (\tH_{i,k+1}) = \tS_{i,k+1} - \tS_k$, where $\psi_{i,k+1}^{\leftarrow}$ is generalized inverse of  $\psi_{i,k+1}(t) =  \int_{\tS_k}^{\tS_k+t} \lambda_i\parentheseLigne{s, \varphi_{\tS_k,s}(Y_{k}')} \rmd s$. Then,
By \Cref{lem:prop_bH} and definition of
$(Y'_{\tilde{k}},\tS_{\tilde{k}},\tI_{\tilde{k}})_{\tilde{k} \in \nset}$, since, given $\{\tI_{k+1}=i\} \cap \{\tS_{k+1} < \plusinfty\}$,
$Y'_{k+1} = \bfG(\tS_{i,k+1}, \varphi_{\tS_k,\tS_{i,k+1}}(Y'_k),U_{i,\bN_{k+1}})$ and $U_{i,\bN_{k+1}}$ is independent of $\sigma(\tmcf_k,(\tH_{j,k+1})_{j \in\iint{1,\ell}})$, then setting
$\mcg_{i,k+1} =\sigma(\tmcf_k,\tH_{i,k+1},U_{i,\bN_{k+1}})$ and $B_{j,k+1} = \1_{\rset_+}(\tS_{i,k+1})\int_{\tS_k}^{\tS_{i,k+1}} \lambda_j( s,
\varphi_{\tS_k,s}(Y_{k}')) \rmd s$, for $j \in \iint{1,\ell}\setminus\{i\}$, we have
\begin{align*}  
  &\CPPLigne{Y'_{k+1}\in \msa, \tS_{k+1} \leq t, \tI_{k+1} = i}{\tmcf_k} \\
  &     =   \CPPLigne{Y'_{k+1} \in \msa, \tS_{i,k+1} \leq t , \tS_{i,k+1} <  \tS_{j,k+1}, \text{ for all $j \in \iint{1,\ell}\setminus\{i\}$}, \tI_{k+1} = i}{\tmcf_k}    \\
  &     =   \textstyle{ \CPELigne{\1_\msa(Y'_{k+1})  \1_{\ccint{\tS_k,t}}(\tS_{i,k+1}) \CPPLigne{ \bigcap_{j \in \iint{1,\ell}, j \not = i} \{\tS_{i,k+1} < \tS_{j,k+1}\}}{\mcg_{i,k+1}}}{\tmcf_k}   }  \\
&     =   \textstyle{ \CPELigne{\1_\msa(Y'_{k+1}) \1_{\ccint{\tS_k,t}}(\tS_{i,k+1}) \CPPLigne{  \bigcap_{j \in \iint{1,\ell}, j \not = i} \{ B_{j,k+1} < \tH_{j,k+1}\}}{\mcg_{i,k+1}}}{\tmcf_k} } \\
  &=  \textstyle{ \CPELigne{\1_\msa(Y'_{k+1}) \1_{\ccint{\tS_k,t}}(\tS_{i,k+1}) \exp\defEnsLigne{-\sum_{j=1,j \not = i}^\ell \int_{\tS_k}^{\tS_{i,k+1}}  \lambda_j(u,\varphi_{\tS_k,u}(Y_k'))\rmd u }}{\tmcf_k} } \eqsp. 
\end{align*}
The proof then follows from $\psi_{i,k+1}^{\leftarrow} (\tH_{i,k+1}) = \tS_{i,k+1} - \tS_k$ \Cref{lem:prop_bH} and the definition of $Y'_{k+1}$.
\end{proof}

For $k\geqslant 1$ with $S_k <\infty$, we say that, at time $S_k$, the process $(X_t)_{t \geq 0}$ given by \Cref{const:1} has made a jump of type $I_k$, or equivalently that $S_k$ is a jump time of type $I_k$. Define
\begin{equation}
  \label{eq:def_t_j}
  T^{(j)}\ = \ \inf\ensemble{S_k}{\ k\geqslant 1,\ I_k = j} \eqsp,
\end{equation}
 the first jump time of type $j$. Then, one example of application of \Cref{propo:eg_deux_proc_1} is the following result.
 \begin{proposition}
   \label{prop:premier_bounce}
Let $\ell=\ell_1+\ell_2$ with $\ell_1,\ell_2\in\nsets$. Let $(X_t)_{t \geq 0}$ be a PDMP on $\msm$ with characteristics $(\varphi,(\lambda_i,Q_i)_{i \in \iint{1,\ell}})$ given by \Cref{const:1} and initial distribution $\mu_0$. Define $T = \min\ensembleLigne{T^{(i)}}{i\in \iint{\ell_1+1,\ell} }$, where $T^{(j)}$ is given by \eqref{eq:def_t_j} for all $j \in \iint{1,\ell}$ . Then the cumulative distribution function of $T$ is given for all $u\geqslant 0$ by
\begin{equation}
   \label{eq:premier_bounce}
\proba{ T \leq u}  =  \proba{ E <  \sum_{i\in \iint{\ell_1+1}}^{\ell}\int_0^{u \wedge \tau_\infty(Z)}\lambda_{i}\po s,Z_s\pf \rmd s }\eqsp,
 \end{equation}
 where $ (Z_t)_{t \geq 0}$ is a PDMP with characteristics $(\varphi,(\lambda_i,Q_i)_{i\in \iint{1,\ell_1}})$  and initial distribution $\mu_0$, and $E$ is a standard exponential \rv~independent of $ (Z_t)_{t \geq 0}$.
 \end{proposition}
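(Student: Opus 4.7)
The approach is to invoke \Cref{propo:eg_deux_proc_1} so that, without loss of generality, $(X_t)_{t\geqslant 0}$ may be assumed to be built via \Cref{const:2}, in which each jump mechanism $(\lambda_j,Q_j)$ carries its own exponential clock. I would then couple $(X_t)_{t\geqslant 0}$ with the reduced PDMP $(Z_t)_{t\geqslant 0}$ on the same probability space by running \Cref{const:2} for $Z$ with the same initial variable $W_0\sim\mu_0$ and the same family $(E_{j,k},U_{j,k})_{j\in\iint{1,\ell_1},\eqsp k\in\nsets}$ as for $X$, while the additional clocks $(E_{j,1})_{j\in\iint{\ell_1+1,\ell}}$ driving $X$ are chosen independent of every random variable used in the construction of $Z$.

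By induction on the jumps of the coupled chain, as long as no jump of $X$ of type in $\iint{\ell_1+1,\ell}$ has occurred, the mechanisms of $X$ of types in $\iint{1,\ell_1}$ fire at exactly the same times and produce the same post-jump states as those of $Z$; hence $X_t = Z_t$ for every $t < T \wedge \tau_\infty(Z)$. Moreover, the recursive update rule for $\tH_{j,k+1}$ in \Cref{const:2} applied to an index $j\in\iint{\ell_1+1,\ell}$ that has not yet jumped gives, by induction, $\tH_{j,k+1} = E_{j,1} - \int_0^{\tS_k}\lambda_j(s,X_s)\rmd s$ for every $k$ with $\tS_k < T \wedge \tau_\infty(Z)$. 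Substituting $X_s = Z_s$ yields the characterization
\begin{equation*}
  T^{(j)}\ = \ \inf\ensemble{t \geqslant 0}{E_{j,1} < \int_0^{t\wedge \tau_\infty(Z)} \lambda_j(s,Z_s)\rmd s}\eqsp,\quad j\in\iint{\ell_1+1,\ell}\eqsp,
\end{equation*}
with the convention $\inf\emptyset = +\infty$ taking care of the degenerate case where $Z$ explodes before any such clock fires.

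From $T = \min_{j\in\iint{\ell_1+1,\ell}}T^{(j)}$ and the fact that $(E_{j,1})_{j\in\iint{\ell_1+1,\ell}}$ are i.i.d. standard exponentials jointly independent of $(Z_t)_{t\geqslant 0}$, conditioning on $(Z_t)_{t\geqslant 0}$ gives
\begin{equation*}
  \proba{T > u \mid (Z_t)_{t\geqslant 0}}\ =\ \prod_{j=\ell_1+1}^{\ell}\exp\defEns{-\int_0^{u\wedge \tau_\infty(Z)}\lambda_j(s,Z_s)\rmd s}\ =\ \exp\defEns{-\sum_{j=\ell_1+1}^{\ell}\int_0^{u\wedge \tau_\infty(Z)}\lambda_j(s,Z_s)\rmd s}\eqsp.
\end{equation*}
Taking expectations and recognizing the right-hand side as $\proba{E \geqslant \sum_{j=\ell_1+1}^{\ell}\int_0^{u\wedge\tau_\infty(Z)}\lambda_j(s,Z_s)\rmd s}$ for a standard exponential $E$ independent of $(Z_t)_{t\geqslant 0}$ gives \eqref{eq:premier_bounce}. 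The main delicate point will be to rigorously justify the coupling identity $X_t = Z_t$ on $[0,T\wedge\tau_\infty(Z))$: this is done by an induction on the jump index of the coupled chain, verifying at each step both that the next jump scheduled by \Cref{const:2} for $X$ is of type in $\iint{1,\ell_1}$ and that the corresponding clock and uniform randomizer coincide with those driving $Z$.
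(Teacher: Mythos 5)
Your proposal follows exactly the paper's route: reduce to \Cref{const:2} via \Cref{propo:eg_deux_proc_1}, couple $(X_t)$ and $(Z_t)$ on the same probability space through the shared noise $(E_{j,k},U_{j,k})_{j\in\iint{1,\ell_1}}$, observe that the two paths agree up to $T\wedge\tau_\infty(Z)$, and then exploit the independence of the remaining exponential clocks $(E_{j,1})_{j\in\iint{\ell_1+1,\ell}}$ from $(Z_t)$ to integrate out $T$.

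One small overstatement: the intermediate displayed identity $T^{(j)}=\inf\ensemble{t\geq 0}{E_{j,1}<\int_0^{t\wedge\tau_\infty(Z)}\lambda_j(s,Z_s)\rmd s}$ cannot hold for every individual $j\in\iint{\ell_1+1,\ell}$. For the index $j^\star$ that realizes the minimum $T$, it is correct; but for $j\neq j^\star$, the path of $X$ diverges from that of $Z$ after time $T$, so the clock for type $j$ accumulates $\int\lambda_j(s,X_s)\rmd s$ and not $\int\lambda_j(s,Z_s)\rmd s$, and $T^{(j)}$ need not coincide with the right-hand side. The paper avoids this by characterizing the events $\{\tT<\tau_\infty(Z)\}\cap\{\tT\geq t\}$ and $\{\tT\geq\tau_\infty(Z)\vee t\}$ directly rather than asserting per-index identities. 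That said, your final step only uses $\{T>u\}=\bigcap_{j}\{E_{j,1}\geq \int_0^{u\wedge\tau_\infty(Z)}\lambda_j(s,Z_s)\rmd s\}$, which \emph{is} correct (since on $\{T>u\}$ the coupled paths agree on $[0,u\wedge\tau_\infty(Z))$), so the conclusion is sound. If you restate the intermediate step as an equality of events for $T$ rather than as formulas for each $T^{(j)}$ separately, the argument matches the paper's.
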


 \begin{proof}
   Let $(Y_t)_{t \geq 0}$ be a PDMP defined by  \Cref{const:2} with characteristics  $(\varphi,(\lambda_i,$ $Q_i)_{i\in \iint{1,\ell}})$  and initial distribution $\mu_0$. Define similarly to $(X_t)_{t \geq 0}$ for all $j \in \iint{1,\ell}$,
$  \tT^{(j)}\ = \ \inf\ensembleLigne{\tS_k}{\ k\geqslant 1,\ \tI_k = j} $ and $\tT = \min\ensembleLigne{\tT^{(i)}}{i \in \iint{\ell_1+1,\ell_2} }$. Note that since by \Cref{propo:eg_deux_proc_1}, $(X'_k,S_k,I_k)_{k \in \nset}$ and $(Y'_k,\tS_k,\tI_k)_{k \in \nset}$ has the same distribution, $T$ and $\tT$ have the same distribution and it suffices to show that the cumulative distribution function of $\tT$ is given by \eqref{eq:premier_bounce}.

Let $(Z_t)_{t \geq 0}$ be a PDMP with characteristics $(\varphi,(\lambda_i,Q_i)_{i \in \iint{1,\ell_1}})$ and initial distribution $\mu_0$ defined by \Cref{const:2} and based on the \rvs~$W_0$, $(E_{j,k},U_{j,k})_{j \in \iint{1,\ell_1}, k \in \nset}$, and let $(Z_k',R_k,J_k)_{k \in \nset}$ be the corresponding embedded chain. By construction, for all $t \leq \tT \wedge \tau_{\infty}(Z)$, $Y_t =Z_t$. In addition, define
\begin{equation*}
  N = \inf\ensemble{k \in \nsets}{\text{ there exists } i \in \iint{\ell_1+1,\ell}, \tI_k = i} \eqsp.
\end{equation*}
By definition, $\tT= \tS_N$ on $\{ \tT <  \plusinfty\}$ and for all $k \in \nsets$, on $\{N=k\}$ for all $t \in \ccintLigne{0,\tS_k}$, $Y_t = Z_t$, and for all $n \leq k$, $Y'_n = Z_n'$, $\tS_n = R_n$. Therefore, for all $k\in \nsets$, on $\{N\geq k\}\cap\{\tau_{\infty}(Z) \geq t\}$, for all $i \in \iint{\ell_1+1,\ell}$, we have by induction
\begin{equation*}
  \tS_{i,k} = \inf \ensemble{ t>R_{k-1}}{ \tH_{i,k} < \int_{R_{k-1}}^t \lambda_i\parenthese{s, \varphi_{R_{k-1},s}(Z_{k}')} \rmd s }
   = \inf \ensemble{ t> 0 }{ E_{i,0} < \int_{0}^t \lambda_i\parenthese{s, Z_s} \rmd s } \eqsp.
 \end{equation*}
Since $\{\tT < \tau_{\infty}(Z)\} \subset \{\tT < \plusinfty\}$, we thus obtain
\begin{align}
  \nonumber
  \{\tT < \tau_{\infty}(Z)\} \cap   \{\tT \geq t\} &=  \{\tT < \tau_{\infty}(Z)\} \cap \defEns{ \bigcap_{k \in \nsets, \, i \in \iint{\ell_1+1,\ell}} \{ \tS_{i,k} \geq t\} \cap \{N = k\}}\\
  \label{eq:premier_bounce_1}
  &=  \{\tT < \tau_{\infty}(Z)\} \cap \defEns{  \bigcap_{  i \in \iint{\ell_1+1,\ell}} \defEns{ E_{i,0} \geq \int_{0}^t \lambda_i\parenthese{s, Z_s} \rmd s } }\eqsp.
\end{align}

In addition, since $\{ \tT = \plusinfty\} = \{\tT \geq \tau_{\infty}(Z)\}$, we have
\begin{align}
  \nonumber
  \{\tT \geq \tau_{\infty}(Z) \vee t \}  = \{\tT = \plusinfty\} &= \bigcap_{i \in \iint{\ell_1+1,\ell}, \, k \in \nsets} \{ \tS_{i,k} \geq R_k \} \\
  \label{eq:premier_bounce_2}
                                                         & =    \bigcap_{  i \in \iint{\ell_1+1,\ell}} \defEns{ E_{i,0} \geq \int_{0}^{\tau_{\infty}(Z)} \lambda_i\parenthese{s, Z_s} \rmd s }  \eqsp.
\end{align}
Combining \eqref{eq:premier_bounce_1} and \eqref{eq:premier_bounce_2} and since $(E_{i,0})_{i \in \iint{\ell_1+1,\ell_2}}$ is independent of $(Z_s)_{s \geq 0}$, we get
\begin{equation*}
  \proba{\tT \geq t } = \proba{\bigcap_{  i \in \iint{\ell_1+1,\ell}} \defEns{ E_{i,0} \geq \int_{0}^{\tau_{\infty}(Z) \wedge t } \lambda_i\parenthese{s, Z_s} \rmd s }} = \expe{\exp\parenthese{- \sum_{i=\ell_1+1}^{\ell} \int_0^{\tau_{\infty}(Z)\wedge t}  \lambda_i\parenthese{s, Z_s} \rmd s   }}\eqsp,
\end{equation*}
which concludes the proof.
\end{proof}

\section{Superposition and splitting of jump mechanisms}
\label{sec:superposition}
We now introduce a  tool to deal with PDMP:  superposition and splitting of jump mechanisms. \tcr{This is a natural generalization of the fact that the sum of two independent Poisson processes with jump rates $\bar{\lambda}_1,\bar{\lambda}_2 : \rset_+ \to \rset_+$, is a Poisson process associated with the jump rate $\bar{\lambda}_1+\bar{\lambda}_2$. 
}
\begin{theorem}\label{prop:superposition}
Let $(\lambda_{i,1},Q_{i,1})_{i\in\cco 1,\ell_1\ccf}$ and $(\lambda_{i,2},Q_{i,2})_{i\in\cco 1,\ell_2\ccf}$ be two families of jump mechanisms on $\msm$. Suppose that for all $(t,x,\msa) \in \rset_+\times \msm\times \mcb{\msm}$,
\begin{eqnarray}\label{eq:prop-superposition}
\sum_{i=1}^{\ell_1} \lambda_{i,1}(t,x)(  Q_{i,1}(t,x,\msa) - \updelta_x(\msa)) & = & \sum_{i=1}^{\ell_2} \lambda_{i,2}(t,x)(  Q_{i,2}(t,x,\msa) - \updelta_x(\msa)) \eqsp.
\end{eqnarray}
Then, for all differential flow $\varphi$ and initial distribution $\mu_0\in\mcp(\msm)$,
\[\PDMP( \varphi,(\lambda_{i,1},Q_{i,1})_{i\in \cco 1,{\ell_1}\ccf},\mu_0)=\PDMP( \varphi,(\lambda_{i,2},Q_{i,2})_{i\in \cco 1,{\ell_2}\ccf},\mu_0)\eqsp.\] 
\end{theorem}


If $(\lambda_{i},Q_{i})_{i\in\iint{1,\ell}}$ is a family of jump mechanisms, we define  the associated minimal jump mechanism $(\lambda_\mrm,Q_\mrm)$ and the associated total jump mechanism $(\lambda_\rmt,Q_\rmt)$  for all $(t,x,\msa)\in\rset_+\times\msm\times \mcb{\msm}$, by 
\begin{equation}
  \label{eq:def_min_total_jump_mecha}
\begin{aligned}
\lambda_\mrm(t,x) & =  \sum_{i=1}^\ell \lambda_i(t,x) Q_i( t,x,\msm\setminus\{x\})\\
Q_\mrm(t,x,\msa) & =   \begin{cases}
 \lambda_\mrm^{-1}(t,x)  \sum_{i=1}^\ell \lambda_i(t,x) Q_i( t,x,\msa \setminus\{x\} ) & \text{if } \lambda_\mrm(t,x) \neq 0,\\
\updelta_x(\msa)  & \text{otherwise.}
\end{cases}   \\
\lambda_\rmt(t,x) & =  \sum_{i=1}^\ell \lambda_i(t,x) \\
  Q_\rmt(t,x,\msa) & =
                     \begin{cases}
 \lambda_\rmt^{-1}(t,x)  \sum_{i=1}^\ell \lambda_i(t,x) Q_i( t,x,\msa  ) & \text{if } \lambda_\rmt(t,x) \neq 0,\\
\updelta_x(\msa)  & \text{otherwise} \eqsp.
\end{cases}   
\end{aligned}
\end{equation}
The jump mechanism $(\lambda_\mrm,Q_\mrm)$ is minimal in the sense
that if $\lambda_\mrm(t,x) \neq 0$, for $t \in \rset_+$, $x \in \msm$,
then $Q_\mrm(t,x,\{x\}) = 0$. As a consequence, if $(X_t)_{t \geq 0}$ is a PDMP with
characteristics $(\varphi, \lambda_\mrm,Q_\mrm)$ and jump times $S_k$, $k\in\mathbb N$,
 then almost surely $X_{S_{k+1}} \neq \varphi_{S_{k},S_{k+1}}(X_{S_k})$,
 and therefore $(X_t)_{t \geq 0}$ has no fantom jumps.

For all
$(t,x,\msa)\in\rset_+\times\msm\times \mcb{\msm}$ and
$i\in\cco 1,\tcr{\ell}\ccf$,
\[Q_i( t,x,\msa \setminus \{x\} )  - \updelta_x(\msa)  Q_i( t,x,\msm \setminus \{x\} ) \ = \ Q_i( t,x,\msa)  - \updelta_x(\msa) \eqsp .\]
\tcr{This means that, if $(\lambda_{i,1},Q_{i,1})_{i \in \iint{1,\ell_1}}$ and $(\lambda_{i,2},Q_{i,2})_{i \in \iint{1,\ell_2}}$ are two families of jump mechanisms satisfying \eqref{eq:prop-superposition}, they have the same associated minimal jump mechanism. Therefore, the statement of \Cref{prop:superposition} is equivalent to prove that, for all family $(\lambda_{i},Q_{i})_{i\in\iint{1,\ell}}$, the associated minimal and total jump mechanisms are such that
\[\PDMP(\varphi,(\lambda_{i},Q_{i})_{i \in \iint{1,\ell}},\mu_0) = \PDMP(\varphi,\lambda_{\mrt},Q_\mrt,\mu_0)=\PDMP(\varphi,\lambda_{\mrm},Q_\mrm,\mu_0).\]
 We first show the first equality in the following lemma.}

\begin{lemma}\label{lem:superposition1}
  Let $(\lambda_\mrt,Q_\mrt)$ be the total jump mechanism associated to   $(\lambda_{i},Q_{i})_{i\in\iint{1,\ell}}$. Then, for all flow $\varphi$ and  $\mu_0\in\mcp(\msm)$, $\PDMP(\varphi,(\lambda_{i},Q_{i})_{i\in\iint{1,\ell}},\mu_0) = \PDMP(\varphi,\lambda_\mrt,Q_\mrt,\mu_0)$.
\end{lemma}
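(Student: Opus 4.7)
My plan is to verify that the two PDMPs, both built via \Cref{const:1}, admit embedded chains with the same initial distribution and the same transition kernel; by the observation in \Cref{sec:framework} that a PDMP is completely determined by its flow and its embedded chain, this will suffice to conclude that they have the same law on $\mrd(\rset_+, \msm \cup \{\infty\})$.

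First, I would write $(X'_k,S_k,I_k)_{k \in \nset}$ for the embedded chain associated with the characteristics $(\varphi,(\lambda_i,Q_i)_{i \in \iint{1,\ell}})$ and $(X'^\mrt_k,S^\mrt_k)_{k \in \nset}$ for the one associated with $(\varphi,\lambda_\mrt,Q_\mrt)$. Both start from $(W_0,0)$ with $W_0 \sim \mu_0$, so the initial distributions match. It thus suffices to compare their Markov kernels, which are characterized by \eqref{eq:carac_chaine_markov_1}. For the right-hand side process, the transition reduces (with $\ell=1$) to
\begin{equation*}
\CPP{X'^\mrt_{k+1} \in \msa,\, S^\mrt_{k+1}\leq t}{\mcf^\mrt_k} = \1_\msm(X'^\mrt_k) \int_{S^\mrt_k}^t Q_\mrt(s,\psi_s,\msa)\,\lambda_\mrt(s,\psi_s)\, \exp\defEns{-\int_{S^\mrt_k}^s \lambda_\mrt(u,\psi_u) \rmd u}\rmd s
\end{equation*}
with $\psi_u=\varphi_{S^\mrt_k,u}(X'^\mrt_k)$, while marginalizing \eqref{eq:carac_chaine_markov_1} for the left-hand side process over $j \in \iint{1,\ell}$ and using that $\lambda_\mrt = \sum_{i=1}^\ell \lambda_i$ by definition gives the same formula with $\sum_{j=1}^{\ell} \lambda_j(s,\psi_s) Q_j(s,\psi_s,\msa)$ in place of $\lambda_\mrt(s,\psi_s) Q_\mrt(s,\psi_s,\msa)$.

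The only remaining step is to check the identity $\sum_{j=1}^\ell \lambda_j(s,y) Q_j(s,y,\msa) = \lambda_\mrt(s,y) Q_\mrt(s,y,\msa)$ for all $(s,y,\msa) \in \rset_+ \times \msm \times \mcb{\msm}$. When $\lambda_\mrt(s,y) > 0$, this is immediate from the definition \eqref{eq:def_min_total_jump_mecha} of $Q_\mrt$. When $\lambda_\mrt(s,y) = 0$, all $\lambda_j(s,y)$ vanish since they are non-negative, so both sides are zero regardless of the convention $Q_\mrt(s,y,\cdot)=\updelta_y$. Hence the two transition kernels coincide, which yields the claim.

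There is no real obstacle here: the result is essentially the standard superposition property of competing inhomogeneous Poisson clocks, packaged through the explicit transition formula \eqref{eq:carac_chaine_markov_1}. The only point requiring a moment of care is the convention defining $Q_\mrt$ at degeneracy points, and it is harmless because it is multiplied by $\lambda_\mrt$ inside the integral.
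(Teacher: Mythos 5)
Your proof is correct and follows essentially the same route as the paper: compare the Markov kernels of the embedded chains via \eqref{eq:carac_chaine_markov_1} by summing out $j \in \iint{1,\ell}$, then recognize that $\sum_{j=1}^\ell \lambda_j Q_j = \lambda_\mrt Q_\mrt$. Your explicit handling of the degeneracy $\lambda_\mrt(s,y)=0$ is a welcome but minor addition that the paper leaves implicit (the convention for $Q_\mrt$ there is harmless precisely because it is multiplied by $\lambda_\mrt$, as you note).
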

\begin{proof}
  Let $(X_t)_{t\geqslant0}$ be a PDMP with characteristics $(\varphi,(\lambda_{i},Q_{i})_{i\in\iint{1,\ell}})$ and initial distribution $\mu_0$ defined by
\Cref{const:1}, and  $(X_k',S_k)_{k\in\mathbb N}$ be its embedded chain. Since the process is completely determined by its embedded chain, and by the Markov property, it is sufficient to prove that the Markov kernel of $(X_k',S_k)_{k\in\mathbb N}$  is equal to the Markov kernel of the embedded chain associated to a PDMP with  characteristics $(\varphi,\lambda_\mrt,Q_\mrt)$. 

 Summing out \eqref{eq:carac_chaine_markov_1} over $j\in\iint{1,\ell}$, we get for all $t\geqslant 0$ and $\msa\in\mcb{\msm}$
 \begin{align}
  \nonumber &\CPP{X'_{k+1}\in \msa, S_{k+1} \leq t}{\mcf_k}{} 
  \\
  \nonumber
  &= \1_{\msm}(X'_k) \int_{S_k}^{t}   \sum_{j=1}^\ell  Q_j(s,\varphi_{S_k,s}(X_k'),\msa)  \lambda_j(s,\varphi_{S_k,s}(X_k')) \, \exp\parentheseDeux{-\sum_{i=1}^\ell \int_{S_k}^s \,  \lambda_i(u,\varphi_{S_k,u}(X_k'))  \rmd u} \rmd s \\
    \nonumber
  &= \1_{\msm}(X'_k) \int_{S_k}^{t} Q_\rmt(s,\varphi_{S_k,s}(X_k'),\msa)\lambda_\mrt(s,\varphi_{S_k,s}(X_k')) \, \exp\parentheseDeux{-  \int_{S_k}^s  \,  \lambda_\mrt(u,\varphi_{S_k,u}(X_k'))\rmd u }\rmd s \eqsp,
\end{align}
which concludes, since from \eqref{eq:carac_chaine_markov_1} this is exactly the Markov kernel of the embedded chain associated to a PDMP with  characteristics $(\varphi,\lambda_\mrt,Q_\mrt)$ defined by \eqref{eq:def_min_total_jump_mecha}.
\end{proof}

Before showing \Cref{prop:superposition}, we need the following technical lemma. In the sequel, $\Id$ denotes the identity Markov kernel, defined by $\Id(t,x,\msa) = \updelta_{x}(\msa)$ for all $t\geqslant 0$, $x\in\msm$, $\msa\in\mcb{\msm}$. The following lemma gives a rigorous proof of the intuitive idea that adding fantom  jumps does not change the distribution of the process.

\begin{lemma}\label{lem:superposition2}
For any characteristics $(\varphi,\lambda,Q)$, jump rate $\lambda':\msm \to \rset_+$ and $\mu_0\in\mcp(\msm)$,
\[\PDMP(\varphi,\lambda,Q,\mu_0) = \PDMP(\varphi,\{(\lambda,Q),(\lambda',\Id)\},\mu_0)\eqsp.\]
\end{lemma}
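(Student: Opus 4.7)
The plan is to reduce via \Cref{lem:superposition1}. Applied to the two-mechanism family $\{(\lambda,Q),(\lambda',\Id)\}$, that lemma yields that the right-hand side equals $\PDMP(\varphi,\lambda_\rmt,Q_\rmt,\mu_0)$, where $\lambda_\rmt=\lambda+\lambda'$ and, on $\{\lambda+\lambda'\neq 0\}$,
\[
Q_\rmt(t,x,\msa) \ = \ \frac{\lambda(t,x)}{\lambda_\rmt(t,x)} Q(t,x,\msa) + \frac{\lambda'(t,x)}{\lambda_\rmt(t,x)} \updelta_x(\msa)\eqsp.
\]
It therefore suffices to prove $\PDMP(\varphi,\lambda,Q,\mu_0) = \PDMP(\varphi,\lambda_\rmt,Q_\rmt,\mu_0)$.

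Next I would let $(Y_t)_{t\geqslant 0}$ be built by \Cref{const:1} from $(\varphi,\lambda_\rmt,Q_\rmt)$, with embedded chain $(Y'_k,\tS_k)_{k\in\nset}$. Because $Q_\rmt$ is a convex combination, each jump at $\tS_{k+1}$ can be labelled by an auxiliary Bernoulli mark $B_{k+1}$ with conditional success probability $\lambda/\lambda_\rmt$ evaluated at $(\tS_{k+1},\varphi_{\tS_k,\tS_{k+1}}(Y'_k))$: on $\{B_{k+1}=1\}$ the jump is \emph{true} (the new position is sampled from $Q$), while on $\{B_{k+1}=0\}$ it is \emph{fantom} (the new position equals $\varphi_{\tS_k,\tS_{k+1}}(Y'_k)$). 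Let $T_0=0<T_1<T_2<\cdots$ be the true jump times and set $X''_k = Y_{T_k}$. Since fantom jumps leave the position unchanged and the flow satisfies $\varphi_{s,t}\circ\varphi_{r,s}=\varphi_{r,t}$, one gets $Y_t = \varphi_{T_k,t}(X''_k)$ for $t\in\coint{T_k,T_{k+1}}$, hence $(Y_t)_{t\geqslant 0}$ is fully determined by $(X''_k,T_k)_{k\in\nset}$, and it suffices to check that this sub-chain has the inhomogeneous Markov kernel characterizing $\PDMP(\varphi,\lambda,Q,\mu_0)$ via \eqref{eq:carac_chaine_markov_1} with $\ell=1$.

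By the Markov property of the enlarged chain, it is enough to evaluate $\mathbb P(T_1\leq t, X''_1\in\msa \mid Y_0=x)$. Decomposing on the number $n\geqslant 0$ of fantom jumps preceding the first true jump, \eqref{eq:carac_chaine_markov_1} applied to the two-mechanism description gives the infinitesimal density of a history with fantom jumps at $0<s_1<\cdots<s_n$ and a true jump at $s\in\ocint{s_n,t}$ landing in $\msa$ as
\[
\prod_{i=1}^n \lambda'(s_i,\varphi_{s_i}(x))\cdot \lambda(s,\varphi_s(x))Q(s,\varphi_s(x),\msa)\,\rme^{-\int_0^s(\lambda+\lambda')(u,\varphi_u(x))\rmd u}\eqsp.
\]
The $n$-fold simplex integrals sum to $\exp(\int_0^s \lambda'(u,\varphi_u(x))\rmd u)$, which exactly cancels $\lambda'$ in the exponent and produces the target expression $\int_0^t \lambda(s,\varphi_s(x))Q(s,\varphi_s(x),\msa)\,\rme^{-\int_0^s\lambda(u,\varphi_u(x))\rmd u}\rmd s$, matching \eqref{eq:carac_chaine_markov_1} with $\ell=1$.

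The main delicate step is to rigorously construct the enlarged Markov chain $(Y'_k,\tS_k,B_k)_{k\in\nset}$ (by a mild refinement of \Cref{const:1} using an additional uniform to implement the mixture) and to justify the Fubini/series interchange above under the local boundedness of $\lambda,\lambda'$, which guarantees the finiteness of the number of jumps on any bounded interval prior to a possible explosion; handling the inhomogeneous-in-time compositions $\varphi_{s,t}$ adds bookkeeping but no new ideas.
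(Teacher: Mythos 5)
Your argument is correct, but it follows a genuinely different route from the paper's proof. The paper's proof constructs $(Y_t)_{t\geq 0}$ via \Cref{const:2} rather than \Cref{const:1}; the point of using \Cref{const:2} is that the exponential clocks $(E_{1,k})_k$ and $(E_{2,k})_k$ driving the two mechanisms are maintained separately, which makes the type-1 (true) jump times a \emph{deterministic function of the same random inputs} $W_0$, $(E_{1,k},U_{1,k})_k$ that would build the $(\varphi,\lambda,Q)$-process directly. This produces a pathwise identification $Z_t=Y_t$ rather than an equality of transition kernels, and the only probabilistic argument needed is that fantom jumps cannot accumulate (local boundedness of $\lambda'$ plus compactness of the flow on bounded intervals). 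Your proposal instead marginalizes the fantom jumps: after an optional reduction via \Cref{lem:superposition1} (which is harmless but not actually needed --- you then re-introduce the two-mechanism structure through the Bernoulli mark, so you could equally have started from $\{(\lambda,Q),(\lambda',\Id)\}$ and used \eqref{eq:carac_chaine_markov_1} with $\ell=2$ directly), you compute the one-step kernel of the sub-chain of true jumps by summing simplex integrals over $n$ fantom jumps, observing that the $\exp(\int_0^s\lambda')$ factor cancels the $\lambda'$ part of the survival exponential. That computation is right, and Tonelli justifies the interchange since everything is non-negative and $\int_0^s\lambda'(u,\varphi_{0,u}(x))\,\rmd u<\infty$ by local boundedness of $\lambda'$ on the compact orbit $\{\varphi_{0,u}(x):u\in[0,t]\}$.

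Two technical points that you flag but leave informal are genuinely load-bearing and should be spelled out: (i) you must rule out explosion caused by accumulation of fantom jumps alone before $T_1$, which is exactly the local-boundedness argument the paper makes to get $\sup_k R_k=\tau_\infty(Y)$; without it, the sum over $n$ would not exhaust the event $\{T_1\le t\}$. (ii) You need the sub-chain $(X''_k,T_k)$ to be Markov with the stated kernel, which follows from the strong Markov property of the embedded chain at the stopping times indexing the true jumps, together with the fact that a \cadlag{} PDMP path is completely determined by its embedded chain. What your approach buys is that it avoids \Cref{const:2} and works purely in terms of transition kernels via \eqref{eq:carac_chaine_markov_1}; what the paper's coupling buys is a one-line, exact pathwise identity on the randomness, with the single analytic ingredient being non-accumulation of fantom jumps.
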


\begin{proof}

We consider $(Y_t)_{t\geqslant0}$ a PDMP with characteristics $(\varphi,\{(\lambda,Q),(\lambda',\Id)\})$ and initial distribution $\mu_0$ defined from \rvs~$W_0$ and $ (E_{j,k},U_{j,k})_{j\in\{1,2\},k\in\nset}$ by \Cref{const:2}, and its embedded chain $(Y_k',\tS_k,\tI_k)_{k\in\mathbb N}$. Let $R_0=0$ and, for $k\geqslant1$, let $R_{k} = \inf\ensembleLigne{\tS_i>R_{k-1}}{ i\in\mathbb N,\ \tI_i = 1}$ be the $k^{\text{th}}$ jump of type 1 (i.e. associated with the jump  mechanism $(\lambda,Q)$). For $i\geqslant 1$ such that $\tI_i=2$, the $i^{\text{th}}$ jump is a fantom one, i.e. $Y_{\tS_i} =\varphi_{\tS_{i-1},\tS_i}(Y_{\tS_{i-1}})$. By the flow property $\varphi_{s,u}\circ\varphi_{t,s} = \varphi_{t,u}$, this implies that $Y_t=\varphi_{R_k,t}(Y_{R_k})$ for all $k\in\mathbb N$ and $t\in[R_k,R_{k+1}\wedge \tau_\infty(Y))$.

If $k\in \nset$ is such that $R_k<\infty$, then
$\ensembleLigne{\varphi_{R_k,t}(Y_{R_k})}{t\in[R_k,(a+R_k)\wedge
  R_{k+1}]}$ is a compact set of $\msm$, on which $\lambda'$ is
bounded (as a locally bounded function), for any $a >0$. Hence, there
cannot be an infinite number of jump of second type between times
$R_k$ and $(a+R_k)\wedge R_{k+1}$, for any $a >0$. In particular,
necessarily,
$\sup\ensemble{R_k}{k\in\nset} = \sup\ensembleLigne{\tS_k}{k\in\nset}
= \tau_\infty(Y)$.  Indeed, for any $k \in \nset$, $\tS_k \leq R_k$ by
definition and if $R_{k}< \plusinfty$, for any $a >0$, almost surely,
there exists $n_k \in \nset$ such that $\tS_{n_k} \geq (R_k+a) \wedge R_{k+1}$. As a consequence,
\begin{equation}
  \label{eq:proof:lem:superposition2_1}
\text{ $Y_t=\varphi_{R_k,t}(Y_{R_k})$ holds for all $k\in\nset$ and
$t\in[R_k,R_{k+1})$ on $\{ R_k <
\plusinfty\}$ } \eqsp.
\end{equation}

Define for all $k \in \nsets$, $N_k = \inf\{ i > N_{k-1} \, : \, \tI_i = 1\}$, setting $N_0 = 0$.
\Cref{const:2} implies by a straightforward induction  that for any $k\in\nset$, on $\{ R_k < \plusinfty\}\cap \{N_{k+1} < \plusinfty\}$,
\[R_{k+1} = \inf\ensemble{t\geqslant R_k}{\ E_{1,k+1} < \int_{R_k}^t \lambda( s,\varphi_{R_k,s}(Y_{R_k}) )\rmd s }\eqsp.\]
In addition,  for all $k\in\nset$,
\begin{align*}
&  \{ R_k < \plusinfty\}\cap \{N_{k+1} = \plusinfty\}  \\
\qquad &=   \{ R_k < \plusinfty\}\cap \{N_{k+1} = \plusinfty\} \cap \defEns{\bigcap_{i \geq N_k} \defEns{\tH_{1,i+1} > \int_{S_{i}}^t \lambda( s,\varphi_{S_{i},s}(Y_{S_{i}}) )\rmd s  }} \\
\qquad   & =  \{ R_k < \plusinfty\}\cap \{N_{k+1} = \plusinfty\} \cap \defEns{E_{1,k+1}  \geq  \int_{R_k}^{\plusinfty} \lambda( s,\varphi_{R_k,s}(Y_{R_k}) )\rmd s  } \eqsp.
\end{align*}
Therefore, on $\{ R_k < \plusinfty\}\cap \{N_{k+1} = \plusinfty\}$,
\[R_{k+1} = \plusinfty=  \inf\ensemble{t\geqslant R_k}{\ E_{1,k+1} < \int_{R_k}^t \lambda( s,\varphi_{R_k,s}(Y_{R_k}) )\rmd s }\eqsp,\]
and, if $R_{k+1}<\infty$, $Y_{R_{k+1}} = \bfG_{1} (  R_{k+1}, \varphi_{R_{k},R_{k+1}}(Y_{R_{k}}),U_{1,k}) $.

Therefore, denoting $Z_k' = Y_{R_k}$ for all $k\in\mathbb N$, then $(Z_k',R_k)$ is the embedded chain associated to a PDMP $(Z_t)_{t\geqslant0}$ with characteristics $(\varphi,\lambda,Q)$ and constructed with the \rvs~$W_0$ and $( E_{1,k},U_{1,k})_{k\geqslant 0}$ (through either Construction 1 or 2, since there is only one jump mechanism so that both coincides). Finally, for all $k\in\mathbb N$ and $t\in[R_k,R_{k+1})$, $Z_t = \varphi_{R_k,t}(Z_k') = Y_t$ by \eqref{eq:proof:lem:superposition2_1}, which concludes the proof.
\end{proof}

\begin{proof}[Proof of \Cref{prop:superposition}]
  As previously mentioned, to show \Cref{prop:superposition}, it is sufficient to prove that, for all differential flow $\varphi$, initial distribution $\mu_0$  and family of jump mechanisms  $(\lambda_i,Q_i)_{i \in \iint{1,\ell}}$,
   $$\PDMP(\varphi,(\lambda_{i},Q_{i})_{i \in \iint{1,\ell}},\mu_0) = \PDMP(\varphi,\lambda_{\mrt},Q_\mrt,\mu_0)=\PDMP(\varphi,\lambda_{\mrm},Q_\mrm,\mu_0)$$ holds, where $(\lambda_{\mrt},Q_{\mrt})$ and $(\lambda_{\mrm},Q_{\mrm})$ are the associated total and minimal jump mechanisms  defined by \eqref{eq:def_min_total_jump_mecha}. The first identity is given by \Cref{lem:superposition1}, therefore it remains to show the second one.

 By \Cref{lem:superposition2}, since $\lambda_{\mrt} - \lambda_{\mrm}$ is by definition a jump rate (\ie~a positive and locally bounded measurable function), we get for all differential flow $\varphi$ and initial distribution $\mu_0$ that
  \begin{equation*}
    \PDMP(\varphi,\lambda_{\mrm},Q_{\mrm},\mu_0) =     \PDMP(\varphi,\{(\lambda_{\mrm},Q_{\mrm}),(\lambda_{\mrt} - \lambda_{\mrm},\Id)\},\mu_0) \eqsp.
  \end{equation*}
  The proof is concluded upon noting that the total jump mechanism associated with $\{(\lambda_{\mrm},Q_{\mrm}),(\lambda_{\mrt} - \lambda_{\mrm},\Id)\}$ is equal to $(\lambda_{\mrt}, Q_{\mrt})$ and using \Cref{lem:superposition1} again.
\end{proof}

\begin{example*}
  \label{ex:superposition:bps}
  By \Cref{prop:superposition}, the BPS process defined in \Cref{ex:BPS} is a PDMP with characteristics $(\varphi,\lambda,Q)$, where for all $t \geq 0$, $(x,y)\in \rset^d \times \msy$, and $\msa \in \mcbb(\rset^d \times \msy)$, $\varphi_t(x,y) = (x+ty,y)$,
  \begin{equation}
    \label{eq:def_lambda_total_bps}
  \lambda(x,y) = \ps{\nabla U(x)}{y}_+ + \rate \eqsp,  
  \end{equation}
   and
  \begin{equation*}
  Q((x,y),\msa)  =  \lambda^{-1}(x,y)\defEns{\ps{\nabla U(x)}{y}_+ \updelta_{(x,\Refl(x,y))}(\msa) + \rate (\updelta_x \otimes \loiy)(\msa)} \eqsp,
  \end{equation*}
where $\Refl$ is defined in \eqref{eq:def_refl} with $g = \nabla U$. 
\end{example*}



\section{Non-explosion}
\label{sec:non-explose}

It is generally easier to prove that a given particular PDMP is non-explosive than to provide good general conditions that ensure non-explosion for PDMPs. Nevertheless, we give here two results on that topic that will prove useful in the rest of this work, and may be of interest in other situations.

\begin{proposition}\label{prop:non-explosion1}
Let $(X_t)_{t\geqslant0}$ be a PDMP with characteristics $ ( \varphi,(\lambda_i,Q_i)_{i\in \iint{1,\ell}})$ and initial distribution $\mu_0$ given by \Cref{const:1} for some \rvs~$W_0$ and $((E_{j,k})_{j\in\iintLigne{1,\ell}},U_k)_{k\in \nset}$. For all $M>0$, let $(X_t^M)_{t\geqslant0}$ be a PDMP with characteristics $ ( \varphi,(M\wedge \lambda_i,Q_i)_{i\in \iint{1,\ell}})$ and initial distribution $\mu_0$ be given by \Cref{const:1} for the same \rvs~$W_0$ and $((E_{j,k})_{j\in\iintLigne{1,\ell}},U_k)_{k\in \nset}$ as $(X_t)_{t\geqslant 0}$.
  The process $(X_t)_{t\geqslant0}$ is non-explosive if and only if, for all $t\geqslant 0$,
  \begin{equation*}
    \lim_{M \to \plusinfty} \proba{X_s = X_s^M, \, \text{for all $s \in \ccint{0,t}$}} = 1 \eqsp.
  \end{equation*}
\end{proposition}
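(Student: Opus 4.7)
The plan is to couple $X$ and $X^M$ through the shared noise of \Cref{const:1} and exploit the fact that $X^M$ is always non-explosive. Indeed, since $\sum_{j=1}^\ell (M \wedge \lambda_j) \leq \ell M$ is uniformly bounded, the jump times $(S^M_k)_{k \in \nset}$ of $X^M$ are stochastically dominated by those of a Poisson process of rate $\ell M$, so $\tau_\infty(X^M) = +\infty$ almost surely. Introducing the coupling event
$$A_M(t) := \defEnsLigne{X_s=X^M_s,\ \forall s \in \ccintLigne{0,t}}\eqsp,$$
this immediately gives $A_M(t) \subset \{\tau_\infty(X) > t\}$ for every $M>0$.

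The key pathwise comparison is to show that on the event
$$B_M(t) := \defEns{\sup_{j \in \iintLigne{1,\ell}}\ \sup_{s \in [0,t]\cap [0,\tau_\infty(X))} \lambda_j(s, X_{s-}) \leq M}\eqsp,$$
one has $S_k \wedge t = S^M_k\wedge t$ and $X'_k = (X^M)'_k$ for every $k \in \nset$, whence $B_M(t) \subset A_M(t)$. I would establish this by induction on $k$ directly from \Cref{const:1}: given equality at step $k$, the bound $\lambda_j \leq M$ on the flow arc $\defEnsLigne{\varphi_{S_k, s}(X'_k) : s \in [S_k, S_{k+1}\wedge t]}$ forces $\int_{S_k}^\cdot \lambda_j$ and $\int_{S_k}^\cdot (M\wedge\lambda_j)$ to coincide; this in turn gives $S_{j,k+1}\wedge t = S^M_{j,k+1}\wedge t$ for every $j$, so the minima and their argmins agree, and $\bfG_{I_{k+1}}$ applied to the common noise $U_{k+1}$ produces the same post-jump state.

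The technically delicate step will be the converse inclusion $\{\tau_\infty(X) > t\} \subset \bigcup_{M>0} B_M(t)$. On this event only finitely many jumps $0 = S_0 < \cdots < S_K \leq t < S_{K+1}$ occur in $\ccintLigne{0,t}$, so the graph $\bigcup_{k=0}^{K}\ensembleLigne{(s,\varphi_{S_k, s}(X'_k))}{s \in [S_k, S_{k+1}\wedge t]}$ is a finite union of continuous images of compact intervals, hence a compact subset of $\ccintLigne{0,t}\times \msm$. By local boundedness of each $\lambda_j$ there is a (random) $M_0(\omega)<\infty$ uniformly bounding all the $\lambda_j$ on this compact set, so $\omega \in B_{M_0(\omega)}(t)$.

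Combining these steps, $A_M(t)$ is (a.s.)~monotone increasing in $M$ and $\bigcup_{M>0}A_M(t)$ coincides with $\{\tau_\infty(X) > t\}$ up to null sets, so monotone convergence yields $\lim_{M\to\infty}\proba{A_M(t)} = \proba{\tau_\infty(X) > t}$. The equivalence then follows at once: the limit equals $1$ for every $t \geq 0$ if and only if $\proba{\tau_\infty(X) = +\infty}=1$, i.e., $X$ is non-explosive. The main obstacle will be the third step, where the piecewise-flow structure of a non-exploding PDMP trajectory has to be combined with the local boundedness of $\lambda$ to produce the (deterministic given $\omega$) truncation threshold $M_0$; the rest is a rather direct bookkeeping on the common noise of \Cref{const:1}.
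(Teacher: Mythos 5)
Your proof is correct and follows essentially the same route as the paper's: the paper also proves the forward implication by noting that on $\{\tau_\infty(X)>t\}$ the trajectory $\{X_s : s\in[0,t]\}$ is compact so the rates are bounded by some random $M_0(\omega)$, giving a.s.\ eventual coincidence of $X$ and $X^M$ and then invoking dominated convergence, and proves the converse simply via $\proba{\tau_\infty(X)>t}\geqslant \proba{X_t = X_t^M}$ because $X^M$ is non-explosive. Your packaging through the sandwich $B_M(t)\subset A_M(t)\subset\{\tau_\infty(X)>t\}=\bigcup_M B_M(t)$ is a clean way to get both directions from a single monotone-convergence statement, but the underlying ideas (bounded rates $\Rightarrow$ non-explosive truncation, plus pathwise local boundedness on compact flow arcs) are identical.
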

\begin{proof}
  Suppose that $(X_t)_{t\geqslant0}$ is non-explosive. Then for almost
  all $\omega\in\Omega$, the process only jumps a finite number of
  time between times $0$ and $t$, so that
  $\ensembleLigne{X_s}{s\in[0,t]}$ is a compact set of $\msm$. Since
  the rate jumps are locally bounded,
  $\lambda_{\star}(\omega) = \sup\ensembleLigne{\sum_{j=1}^\ell
    \lambda_j(s,X_s)}{s\in[0,t]}$ is finite for almost all
  $\omega\in\Omega$. We get for all $\omega \in \Omega$ and
  $M > \lambda_{\star}(\omega)$, by definition that
  $\sup_{s \in \ccint{0,t}} \abs{X_s - X_s^M} = 0$ and therefore, for
  almost all $\omega \in \Omega$,
  $\lim_{M \to \plusinfty}\1_{\{0\}}(\sup_{s \in \ccint{0,t}}
  \absLigne{X_s - X_s^M}) = 1$. Thus, since
  $\{ X_s = X_s^M, \text{ for all }s \in \ccint{0,t} \} = \{ \sup_{s
  \in \ccint{0,t}} \absLigne{X_s - X_s^M} = 0\}$, the proof is
concluded using the Lebesgue dominated convergence theorem.

Now, to prove the converse, remark that $(X_t^M)_{t\geqslant 0}$ is non-explosive for all $M>0$, since its jump rates are bounded. In particular, $X_t^M \in \msm$ for all $t\geqslant 0$, so that
\[\mathbb P( \tau_\infty(X)>t)  = \mathbb P( X_t \in \msm) \geq  \mathbb P(X_t = X_t^M)\]
for all $M>0$. The conclusion then follows taking $M\to \plusinfty$.
\end{proof}

In particular, \Cref{prop:non-explosion1} implies that, if $(\varphi,(\lambda_i,Q_i)_{i \in \iint{1,\ell}})$ is non explosive, denoting by $(P_{s,t})_{t\geqslant s\geqslant 0}$ and $(P_{s,t}^M)_{t\geqslant s\geqslant 0}$ the Markov semi-group associated to characteristics  $(\varphi,(\lambda_i,Q_i)_{i \in \iint{1,\ell}})$ and  $(\varphi,(M \wedge \lambda_i,Q_i)_{i \in \iint{1,\ell}})$, $M >0$ then, since for all $\mu_0\in\mcp(\msm)$ and all $t\geqslant  0$,
\begin{equation*}
\tvnorm{\mu_0 P_{0,t} -\mu_0 P_{0,t}^M } =  \sup_{\msa\in\mcb{\msm}} \abs{ \mathbb P(X_t \in \msa) - \mathbb P(X_t^M \in \msa) }\  \leqslant 2 \mathbb P(X_t \neq X_t^M)\eqsp,
\end{equation*}
we get $\lim_{M \to \plusinfty} \tvnorm{\mu_0 P_{0,t} -\mu_0 P_{0,t}^M } = 0$.

The second result concerning non-explosion of PDMPs is the following:

\begin{proposition}\label{prop:non-explosion}
  Let  $( \varphi,(\lambda_i,Q_i)_{i\in \iint{1,\ell}})$ be homogeneous characteristics with $\ell>1$. Assume that the characteristics $( \varphi,(\lambda_i,Q_i)_{i\in \iint{1,\ell-1}})$ are non-explosive and  $\norm{\lambda_\ell}_\infty < \plusinfty$. Then  $( \varphi,(\lambda_i,Q_i)_{i\in \iint{1,\ell}})$ are non-explosive as well.
\end{proposition}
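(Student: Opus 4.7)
The plan is to dominate the jumps of type $\ell$ by a Poisson process of rate $\norm{\lambda_\ell}_\infty$ and to invoke the assumed non-explosion of the reduced characteristics between two consecutive such jumps.

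Fix $\mu_0\in\mcp(\msm)$, let $(X_t)_{t \geq 0}$ be the PDMP with characteristics $(\varphi,(\lambda_i,Q_i)_{i\in\iint{1,\ell}})$ and initial distribution $\mu_0$, and set $T_0=0$. For $k \geq 1$ let $T_k$ denote the $k$-th jump time of type $\ell$, with $T_k=+\infty$ if strictly fewer than $k$ such jumps occur before the explosion time $\tau_\infty(X)$. The strategy is to show first that $T_k\to+\infty$ almost surely, and then that no explosion can occur on each interval $\coint{T_k,T_{k+1}}$.

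I would first apply \Cref{prop:premier_bounce} with $\ell_1=\ell-1$, $\ell_2=1$: since by assumption $(\varphi,(\lambda_i,Q_i)_{i\in\iint{1,\ell-1}})$ is non-explosive, the auxiliary process $(Z_t)_{t \geq 0}$ appearing in that statement satisfies $\tau_\infty(Z)=+\infty$ almost surely, so that for all $u \geq 0$,
\begin{equation*}
\proba{T_1 \leq u}\ =\ \proba{E < \int_0^u \lambda_\ell(Z_s)\rmd s}\ \leq\ 1-\rme^{-u\norm{\lambda_\ell}_\infty}\eqsp,
\end{equation*}
where $E$ is a standard exponential \rv~independent of $(Z_t)_{t \geq 0}$. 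By the strong Markov property of $(X_t)_{t \geq 0}$ at $T_k$ on $\{T_k<+\infty\}$, the same bound holds for the conditional law of $T_{k+1}-T_k$ given $\mcf_{T_k}$. A standard stochastic coupling then shows that the inter-arrival times $(T_{k+1}-T_k)_{k\in\nset}$ dominate a sequence of i.i.d.\ exponential \rvs~of parameter $\norm{\lambda_\ell}_\infty$, so that the total number of type-$\ell$ jumps in any bounded time interval is dominated by a Poisson random variable, and $T_k\to+\infty$ almost surely.

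It remains to prove that $\tau_\infty(X)\geq T_k$ for all $k\in\nset$. This follows because, by the coupling underlying the proof of \Cref{prop:premier_bounce}, on $\coint{T_k,T_{k+1}}$ the trajectory of $X$ coincides with that of a PDMP started from $X_{T_k}$ with reduced characteristics $(\varphi,(\lambda_i,Q_i)_{i\in\iint{1,\ell-1}})$, which by assumption is non-explosive; hence $X$ stays in $\msm$ on $\coint{T_k,T_{k+1}}$. Combined with $T_k\to+\infty$, this yields $\tau_\infty(X)=+\infty$ almost surely. The main obstacle will be a clean iterative use of the strong Markov property to reduce each inter-arrival time $T_{k+1}-T_k$ to an instance of \Cref{prop:premier_bounce} with the (random) starting point $X_{T_k}$; once that is done, everything reduces to standard Poisson bookkeeping together with the non-explosion assumption on the reduced characteristics.
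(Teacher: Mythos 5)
Your proof is correct and follows essentially the same route as the paper: bound the rate of type-$\ell$ jumps by $\norm{\lambda_\ell}_\infty$ to show they cannot accumulate, and use the non-explosion of the reduced characteristics between consecutive type-$\ell$ jumps. The paper obtains the exponential lower bounds on inter-arrival times directly from \Cref{const:2} (where type-$\ell$ jumps are driven by their own i.i.d.\ exponential stream) rather than via \Cref{prop:premier_bounce} plus the strong Markov property and a Strassen-type coupling, but this is only a difference of presentation.
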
 

\begin{proof}
  Let $\mu_0\in\mcp(\msm)$ and $(Y_t)_{t \geq 0}$ be a PDMP with
  characteristics $( \varphi,(\lambda_i,Q_i)_{i\in \iint{1,\ell}})$ and
  initial distribution $\mu_0$ given by \Cref{const:2} based on \rvs~$W_0$ and
  $(E_{j,k},U_{j,k})_{j\in\iintLigne{1,\ell},k\in \nset}$. Let
  $(Y_k',\tS_k,\tI_k)_{k\in\nset}$ be the corresponding embedded chain.
  First, consider the decomposition 
  \begin{equation}
\label{eq:definition_true_decomposition}
   \proba{\sup_{n \in \nset} \tS_n < \plusinfty} = \proba{\sup_{n \in \nset} \tS_n < \plusinfty, \sup_{n \in \nsets}\tN_{\ell,n} = \plusinfty}
+\proba{\sup_{n \in \nset} \tS_n < \plusinfty, \sup_{n \in \nsets}\tN_{\ell,n} < \plusinfty} \eqsp.
  \end{equation}
Let us show that both terms of the right-hand-side are equal to $0$.

Define recursively $(N^{(\ell)}_{n})_{n \in \nset}$ by $N^{(\ell)}_{0}=0$ and for all $n \in \nset$, $  N^{(\ell)}_{n+1} = \inf \ensembleLigne{k > N^{(\ell)}_{n}}{\tI_k = \ell} $
Note that for any $n,k \in\nset$ and 
\begin{equation}
\label{eq:pseudo_inv_nn}
  \text{$ N^{(\ell)}_{n} = k$ if and only if $\tN_{\ell,k} = n $} \eqsp, \quad \tS_{N^{(\ell)}_n} = \tS_{\ell,N^{(\ell)}_n} \eqsp.
\end{equation}

Then, on $\{\sup_{n \in \nsets}\tN_{\ell,n} = \plusinfty\}$, for all $n
\in \nset$, $N^{(\ell)}_{n} < \plusinfty$ almost surely. Hence, on $\{\sup_{n \in \nsets}\tN_{n,\ell} = \plusinfty\}$, for all $n \in
\nsets$, by definition  we have almost surely 
\begin{align*}
\sup_{k \in \nsets} \tS_k   \geq
 \sum_{k =1}^{n}\defEns{ \tS_{N_k^{(\ell)}} - \tS_{N_{k-1}^{(\ell)}}} \geq \sum_{i=1}^n (E_{\ell,i}/\norm{\lambda_{\ell}}_{\infty}) \eqsp,
\end{align*}
where the last inequality follows from the bound on $\{N^{(\ell)}_k < \plusinfty \}$,
\begin{align*}
  \tS_{N^{(\ell)}_k} =   \tS_{\ell,N^{(\ell)}_k} &= \inf\ensemble{t > \tS_{N^{(\ell)}_k-1}}{H_{\ell,N^{(\ell)}_k} < \int_{\tS_{N^{(\ell)}_k-1}}^{t} \lambda_{\ell}(X_s) \rmd s}\\
  & =  \inf\ensemble{t > \tS_{N^{(\ell)}_{k-1}}}{E_{\ell,k} < \int_{\tS_{N^{(\ell)}_{k-1}}}^{t} \lambda_{\ell}(X_s) \rmd s} \geq  \tS_{N^{(\ell)}_{k-1}} + E_{\ell,k}/\norm{\lambda_{\ell}}_{\infty} \eqsp.
\end{align*}
Therefore by the law of large number,
\begin{equation*}
  \proba{\sup_{n \in \nset} \tS_n < \plusinfty, \sup_{n \in \nsets}\tN_{\ell,n} = \plusinfty} \leq \inf_{n \in \nsets} \proba{ \sum_{i=1}^n (E_{\ell,i}/\norm{\lambda_{\ell}}_{\infty}) < \plusinfty} = 0 \eqsp.
\end{equation*}

We bound now the second term in
\eqref{eq:definition_true_decomposition}. Let $k \in \nsets$. Note
that by \Cref{const:2}, on $\{\sup_{n \in \nset} \tN_{\ell,n} = k\}$,
for all $ i \in \iint{1,k-1}$,
$\defEnsLigne{X_t \, : \, t \in
  \cointLigne{\tS_{N^{(\ell)}_i},\tS_{N^{(\ell)}_{i+1}}}}$ is a PDMP
    with characteristics
    $(\varphi,(\lambda_i,Q_i)_{i \in \iint{1,\ell-1}})$ with initial
    data $X_{\tS_{N^{(\ell)}_i}}$. Then, by \eqref{eq:pseudo_inv_nn}, the Markov property
      and an immediate induction using that
      $(\varphi,(\lambda_i,Q_i)_{i \in \iint{1,\ell-1}})$ is non
      explosive, we have that on
      $\{\sup_{n \in \nsets} \tN_{\ell,n} = k\}$, almost surely
      $X_{\tS_{N^{(\ell)}_i}} \in \msm$, for $i \in \iint{1,k}$. The proof is then concluded since, using that $(\varphi,(\lambda_i,Q_i)_{i \in \iint{1,\ell-1}})$ is non explosive, \Cref{const:2} and the Markov property again  on $\{\sup_{n \in \nset} \tN_{\ell,n} = k\}$,  $\defEnsLigne{X_t \, : \, t \in \cointLigne{\tS_{N^{(\ell)}_k},\plusinfty}}$ is a PDMP with characteristics $(\varphi,(\lambda_i,Q_i)_{i \in \iint{1,\ell-1}})$ started at $X_{\tS_{N^{(\ell)}_k}}$. 
\end{proof}


Let us come back to our main example.
\begin{example*}
\begin{proposition}\label{prop:non-explosion_BPS}
The BPS process defined in  \Cref{ex:BPS} is non-explosive for any  initial distribution.
\end{proposition}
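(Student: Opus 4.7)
My plan is to reduce the problem via \Cref{prop:non-explosion1}: compare the BPS to truncated processes $(X^M_t, Y^M_t)_{t\geqslant 0}$ with jump rates $(M\wedge \lambda_i, Q_i)_{i\in\{1,2\}}$ and show that for every fixed $T>0$, $X = X^M$ on $[0,T]$ with probability tending to $1$ as $M\to \plusinfty$. The crucial preliminary observation is that $\msv$, being a smooth closed submanifold of $\rset^d$, is compact, so $v_\infty := \sup_{y\in\msv}\|y\|$ is finite. Rotation invariance of $\msv$ together with the fact that $\Refl$ is an orthogonal transformation (preserving the Euclidean norm) force $\Refl(x,y)\in\msv$ whenever $y\in\msv$, and the refreshment kernel samples from $\loiy\in\mcp(\msv)$ as well, so $Y_t\in\msv$ along every trajectory; hence $\|X_t - X_0\|\leqslant v_\infty t$ before the explosion time, and likewise for the truncated processes.

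Fix $x_0 \in \rset^d$ and $T>0$. By continuity of $\nabla U$, the constant
\[M_0 := v_\infty \sup_{x\in\cball{x_0}{v_\infty T}}\|\nabla U(x)\| + \rate\]
is finite, and for $M\geqslant M_0$ the truncations $M\wedge\lambda_i$ coincide with $\lambda_i$ everywhere on $\cball{x_0}{v_\infty T}\times\msv$. I will build $(X_t,Y_t)$ and $(X_t^M,Y_t^M)$ from the same underlying randomness $W_0,\, ((E_{j,k})_{j\in\{1,2\}}, U_k)_{k\in\nsets}$ via \Cref{const:1}, and then run an induction on the almost surely finite number $N_T^M$ of jumps of $X^M$ in $[0,T]$: on each inter-jump interval $[S_k^M, S_{k+1}^M\wedge T]$ the common deterministic segment $s\mapsto \varphi_{S_k, s}(X_k')$ stays inside $\cball{x_0}{v_\infty T}$ thanks to the $v_\infty$-Lipschitz bound, so the rates coincide there. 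The integral criterion \eqref{eq:def_s_j_const_1}, driven by the same exponentials $E_{j,k+1}$, then produces identical candidate jump times (or both exceed $T$), and the same uniform $U_{k+1}$ fed into the same representation $\bfG_{I_{k+1}}$ produces identical post-jump states.

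Conditioning on $X_0=x_0$ this gives $\mathbb P(X_s = X_s^M\text{ for all }s\in[0,T]\mid X_0=x_0) = 1$ whenever $M\geqslant M_0(x_0,T)$; integrating against $\mu_0$ and letting $M\to\plusinfty$ yields $\lim_{M\to\plusinfty}\mathbb P(X_s = X_s^M\text{ for all }s\in[0,T]) = 1$, and \Cref{prop:non-explosion1} then delivers non-explosion. The main obstacle is orchestrating the coupling induction cleanly: one has to index the recursion by the jumps of the non-explosive process $X^M$ rather than of $X$ (whose finiteness of jumps on $[0,T]$ is precisely what we wish to establish), and verify that as long as the shared trajectory stays in $\cball{x_0}{v_\infty T}$ the truncation remains inactive up to and including the next jump of $X^M$ on $[0,T]$.
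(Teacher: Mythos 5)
Your proof has a genuine gap at the outset: you claim that ``$\msv$, being a smooth closed submanifold of $\rset^d$, is compact.'' This is false. In this paper ``closed'' is used in the topological sense (a closed subset of $\rset^d$, equivalently complete), not in the sense of ``compact without boundary''; indeed the state space $\msm=\rset^d\times\msv$ is itself called a ``smooth closed Riemannian sub-manifold'' even though it is never compact. In particular $\msv=\rset^d$ is a perfectly admissible and commonly used choice for the BPS (e.g.\ with Gaussian refreshment distribution $\loiy$), and then $v_\infty=\sup_{y\in\msv}\|y\|=+\infty$, your constant $M_0$ is infinite, the ball $\cball{x_0}{v_\infty T}$ is all of $\rset^d$, and the whole truncation-coupling argument collapses: after a single refreshment the velocity can be arbitrarily large, the position can leave any fixed ball before time $T$, and for no finite $M$ does the truncation stay inactive along the shared trajectory.

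The paper's proof avoids this issue by \emph{not} attempting a uniform velocity bound. It first treats the bounce-only process $(\bX_t,\bY_t)$ with characteristics $(\varphi,\lambda_1,Q_1)$: the reflection $\Refl$ is an isometry, so $\|\bY_t\|=\|y\|$ is \emph{exactly preserved} along any trajectory, giving a pointwise (trajectory-dependent, not uniform) bound $\|\bX_t-x\|\leq t\|y\|$ and hence a finite bound $C(t)$ on $\lambda_1(\bX_t,\bY_t)$. This forces the inter-jump times to stochastically dominate i.i.d.\ exponentials and rules out explosion for the bounce-only dynamics. One then adds the refreshment mechanism $(\lambda_2,Q_2)$, whose rate $\rate$ is bounded, and invokes \Cref{prop:non-explosion} to conclude. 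So the two routes differ: you use the truncation criterion \Cref{prop:non-explosion1}, the paper uses the ``non-explosive $+$ bounded-rate mechanism'' criterion \Cref{prop:non-explosion}. The latter is what lets the refreshment (the only mechanism that can increase $\|Y_t\|$) be absorbed cleanly even when $\msv$ is unbounded; your approach would need to replace the uniform $v_\infty$ by a random, trajectory-dependent bound, which requires noticeably more care and essentially rediscovers the paper's two-step decomposition.
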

\begin{proof}
Using the notation of \Cref{ex:BPS}, consider $(\bX_t,\bY_t)_{t\geqslant 0}$ the PDMP on $\rset^d\times \msy$ with characteristics $(\varphi,\lambda_1,Q_1)$ and initial condition $(\bX_0,\bY_0) = (x,y)\in\msm$ defined by \Cref{const:1} from an i.i.d. sequence of \rvs~$(E_k,U_k)_{k\in\nset}$, associated with the sequence of jump times $(S_k)_{ k \in \nset}$. Note that almost surely $\norm{\bY_t}=\norm{y}$ for all $t\in[ 0,\sup_{n \in \nset} S_n)$, so that  $\norm{\bX_t - x} \leqslant t\norm{y}$ and
\[\lambda_1(\bX_t,\bY_t) \leqslant C(t)= \norm{y}\sup\ensemble{\norm{\na U(x')}}{ x'\in\rset^d,\ \norm{x' - x} \leqslant t\norm{y} }\eqsp.\]
Therefore on $\{\sup_{n\in \nset} S_n < \plusinfty\}$, by \Cref{const:1} almost surely there exists $C \geq 0$ ($C=C(a)$ for some $a >0$) such that 
\begin{equation*}
S_{n+1} -S_n \geq
E_n/(C+1) \text{ for all $n \in \nsets$ } \eqsp.  
\end{equation*}
Then, we have on $ \{ \sup_{n \in \nset} S_n <
\plusinfty\}$, that almost surely there exists $C \geq 0$  such that
\begin{equation*}
 \plusinfty > \sup_{n \in \nset} S_n = \sum_{n \in \nset} \defEns{S_{n+1} - S_n}  \geq \sum_{n \in \nsets} \{E_n/(C+1)\} \eqsp.
\end{equation*}
As a result,
\begin{align*}
  \proba{  \sup_{n \in \nset} S_n <
\plusinfty} &\leq \proba{\bigcup_{\ell \in \nsets}  \defEns{\sum_{n \in \nsets} E_n/(\ell+1) < \plusinfty}}  \leq 0 \eqsp.
\end{align*}
It follows that the PDMP with characteristics $(\varphi,\lambda_1,Q_1)$ is non-explosive. By \Cref{prop:non-explosion}, the BPS is non-explosive.
\end{proof}
\end{example*}


\section{Comparison of PDMP via Synchronous coupling}
\label{sec:synchrone}

\tcr{A synchronous coupling is a coupling between two PDMPs that ensures that, as long as possible, the two corresponding processes jump at the same times and to the same location. This construction yields total variation estimates between the marginal distributions of different PDMPs sharing the same differential  flow.}

The result of this section is crucial in many aspects: first, it gives stability estimates with respect to the jump rates and the underlying Markov kernel for a modification of a PDMP for example for an approximate thinning procedure. Second, \tcr{it gives a way to design smooth (in some sense) approximations of a given (non-smooth) PDMP,} which will be an essential tools to verify assumptions for the BPS in the following sections. The main goal of this section is to prove the following:
\begin{theorem}
\label{prop:synchrone}
Let $(P_{s,t}^{(1)})_{t\geqslant s\geqslant0}$ and $(P_{s,t}^{(2)})_{t\geqslant s\geqslant0}$ be two non-explosive PDMP semigroups with characteristics $( \varphi,\lambda^{(1)},Q^{(1)})$ and $( \varphi,\lambda^{(2)},Q^{(2)})$ respectively. Suppose that there exists a measurable $g :\rset_+ \to \rset_+$ satisfying for all $t\geqslant 0$, 
\begin{equation}
\label{eq:prop:synchrone_1}
  g(t) \geq  \sup_{\substack{x \in \msm,\\ \msa \in\mcb{\msm}}} \{
  \lambda^{(1)}(t,x)\wedge \lambda^{(2)}(t,x)
  \absLigne{Q^{(1)}(t,x,\msa)-Q^{(2)}(t,x,\msa)}\} + \sup_{x \in \msm} \absLigne{\lambda^{(1)}(t,x)-\lambda^{(2)}(t,x)} \eqsp
\end{equation}
or alternatively
\begin{equation}
\label{eq:prop:synchrone_(2)}
  g(t) \geq  \sup_{\substack{x \in \msm,\\ \msa \in\mcb{\msm}}}|\lambda^{(1)}(t,x)( Q^{(1)}(t,x,\msa)-\updelta_x(\msa)) - \lambda^{(2)}(t,x)( Q^{(2)}(t,x,\msa)-\updelta_x(\msa))|  \eqsp.
\end{equation}
Then for all $t\geqslant 0$ and $x \in \msm$, $ \tvnorm{\updelta_x P_{0,t}^{(1)}   - \updelta_x P_{0,t}^{(2)} } \leq  2 \defEnsLigne{ 1 - \exp\parentheseLigne{-\int_0^t g(s)\rmd s} }$.
\end{theorem}

\begin{remark}
  \label{rem:sync}
  Note that if $(\lambda^{(i)})_{i=1,2}$ and $(Q^i)_{i=1,2}$ are two locally bounded jump rates and Markov kernels respectively, then
  \begin{multline}
    \label{eq:rem_sync_1}
    \sup_{\substack{x \in \msm,\\ \msa \in\mcb{\msm}}} \{
  \lambda^{(1)}(t,x)\wedge \lambda^{(2)}(t,x)
  \abs{Q^{(1)}(t,x,\msa)-Q^{(2)}(t,x,\msa)}\} \\
  \leq \sup_{\substack{x \in \msm,\\ \msa \in\mcb{\msm}}} \{
  \abs{   \lambda^{(1)}(t,x) Q^{(1)}(t,x,\msa)- \lambda^{(2)}(t,x) Q^{(2)}(t,x,\msa)}\} \eqsp. 
\end{multline}
Indeed, let $x \in \msm$ and $\msa \in \mcbb(\msm)$. Without loss of generality, we can assume that $\lambda^{(1)}(t,x) \geq \lambda^{(2)}(t,x)$. If $Q^{(1)}(t,x,\msa) \geq Q^{(2)}(t,x,\msa)$, then we have
\begin{multline*}
  \lambda^{(1)}(t,x)\wedge \lambda^{(2)}(t,x)
  \abs{Q^{(1)}(t,x,\msa)-Q^{(2)}(t,x,\msa)}\\
  \leq  \lambda^{(1)}(t,x) Q^{(1)}(t,x,\msa)- \lambda^{(2)}(t,x) Q^{(2)}(t,x,\msa) = \abs{   \lambda^{(1)}(t,x) Q^{(1)}(t,x,\msa)- \lambda^{(2)}(t,x) Q^{(2)}(t,x,\msa)} \eqsp. 
\end{multline*}
Otherwise $Q^{(1)}(t,x,\msa^{\compl}) \geq Q^{(2)}(t,x,\msa^{\compl})$ and we get
\begin{equation*}
  \lambda^{(1)}(t,x)\wedge \lambda^{(2)}(t,x)
  \abs{Q^{(1)}(t,x,\msa)-Q^{(2)}(t,x,\msa)}  \leq   \abs{   \lambda^{(1)}(t,x) Q^{(1)}(t,x,\msa^{\compl})- \lambda^{(2)}(t,x) Q^{(2)}(t,x,\msa^{\compl})} \eqsp. 
\end{equation*}
Therefore, for all $x\in \msm$ and $\msa \in \mcbb(\msm)$,
\begin{equation*}
    \lambda^{(1)}(t,x)\wedge \lambda^{(2)}(t,x)
  \abs{Q^{(1)}(t,x,\msa)-Q^{(2)}(t,x,\msa)}  \leq \sup_{\tilde \msa \in \mcbb(\msm)} \abs{   \lambda^{(1)}(t,x) Q^{(1)}(t,x,\tilde \msa)- \lambda^{(2)}(t,x) Q^{(2)}(t,x, \tilde \msa)} \eqsp,
\end{equation*}
which implies \eqref{eq:rem_sync_1}. Therefore, to establish that \eqref{eq:prop:synchrone_1}, it is sufficient to show that there exists a measurable function $g :\rset_+ \to \rset_+$ such that for all $t \in \rset_+$,
\begin{equation}
  \label{eq:rem_syn_2}
  g(t) \geq 2 \sup_{\substack{x \in \msm,\\ \msa \in\mcb{\msm}}} \{
  \absLigne{   \lambda^{(1)}(t,x) Q^{(1)}(t,x,\msa)- \lambda^{(2)}(t,x) Q^{(2)}(t,x,\msa)}\} \eqsp,
\end{equation}

Conversely, we easily get for all $t \in \rset_+$,
\begin{multline}
\label{eq:rem_syn_3}
  \sup_{\substack{x \in \msm,\\ \msa \in\mcb{\msm}}} \{
  \absLigne{   \lambda^{(1)}(t,x) Q^{(1)}(t,x,\msa)- \lambda^{(2)}(t,x) Q^{(2)}(t,x,\msa)}\}
  \\ \leq 
    \sup_{\substack{x \in \msm,\\ \msa \in\mcb{\msm}}} \{
  \lambda^{(1)}(t,x)\wedge \lambda^{(2)}(t,x)
  \absLigne{Q^{(1)}(t,x,\msa)-Q^{(2)}(t,x,\msa)}\} + \sup_{x \in \msm}\absLigne{\lambda^{(1)}(t,x) -\lambda^{(2)}(t,x)}\eqsp.
\end{multline}

Therefore \eqref{eq:prop:synchrone_1} and \eqref{eq:rem_syn_2} are essentially equivalent up to a factor $2$.
\end{remark}
The proof of \Cref{prop:synchrone} relies on the construction of the Markovian
synchronous coupling between $(P_{s,t}^{(1)})_{t\geq s \geq 0}$ and
$(P_{s,t}^{(2)})_{t\geq s \geq 0}$. More precisely, we want to construct a PDMP $(X_t,Y_t)_{t \geq 0}$ on $\msm^2$ starting from $(x,y) \in \msm^2$ such that the distributions of $(X_t)_{t \geq 0}$ and $(Y_t)_{t \geq 0}$ are $\PDMP(\varphi,\lambda^{(1)},Q^{(1)},\updelta_x)$ and  $\PDMP(\varphi,\lambda^{(2)},Q^{(2)},\updelta_y)$ respectively. In the case where $x=y$, the synchronous coupling attempts to keep $X_s = Y_s$ for all $s\in[0,t]$ by ensuring that, as much as possible, both processes jump at the same time and, when they do, jump as much as possible to the same point. Let us give the formal definition of $(X_t,Y_t)_{t \geq 0}$.

First, by \cite[Corollary 5.22]{villani:2009} or \cite[Theorem 19.1.12]{douc:moulines:priouret:2018}, there exists a jump
kernel $K_0$ on $\msm^2$, such that
for all $t \in \rset_+$ and $(x,y)\in \msm^2$, $K_0(t,(x,y),\cdot)$ is
an optimal transference plane of $Q^{(1)}(t,x,\cdot)$ and
$Q^{(2)}(t,y,\cdot)$ for the total variation, \ie~ for any $\msa \in\mcbb(\msm)$, $K_0(t,(x,y),\msa \times \msm) = Q^{(1)}(x,\msa)$, $K_0(t,(x,y), \msm \times \msa ) = Q^{(2)}(y,\msa)$ and 
\begin{equation}\label{eq:synchrone-k0}
2 K_0(t,(x,y),\Delta_\msm^c ) \ =  \ \tvnorm{Q^{(1)}(t,x,\cdot)-Q^{(2)}(t,y,\cdot)}\eqsp. 
\end{equation}
Define, for $i=0,1,2$  and $j=1,2$ the jump rate $r_i$ and the jump kernel $K_j$ on $\msm^2$ as follows:
for $t \in \rset_+$,$(x,y)\in \msm^2$ and $\msa,\msb\in\mcb{\msm}$, 
\begin{equation}
\label{eq:synchrone-rest}
\begin{aligned}
r_0(t,(x,y)) &= \lambda^{(1)}(t,x)\wedge \lambda^{(2)}(t,y) \eqsp, &  \\
r_1(t,(x,y))& = (\lambda^{(1)}(t,x)- \lambda^{(2)}(t,y))_+\eqsp,   \qquad    K_1(t,(x,y),\msa\times\msb)& =  Q^{(1)}(t,x,\msa) \updelta_y(\msb)\eqsp,\\
r_2(t,(x,y))& = (\lambda^{(2)}(t,x)- \lambda^{(1)}(t,y))_+ \eqsp,  \qquad   K_2(t,(x,y),\msa\times\msb)& =  \updelta_x(\msa)  Q^{(2)}(t,y,\msb)\eqsp.  
\end{aligned}
\end{equation}
\[ \begin{array}{rclcrcl}
\end{array}\]
Let $\varphi^{\otimes }$  be the flow on $\msm^2$  defined for all $t \in \rset_+$ and $(x,y)\in \msm^2$ by
\[\varphi^{\otimes }(t,(x,y)) = ( \varphi(t,x),\varphi(t,y))\eqsp.\]

\begin{lemma}\label{lem:verif-synchrone}
Let $(x,y)\in\msm^2$ and $(X_t,Y_t)_{t\geqslant 0}$ be a PDMP on $\msm^2$ with initial distribution $\updelta_{(x,y)}$ and characteristics $(\varphi^{\otimes },(r_i,K_i)_{i\in\iintLigne{0,2}})$. Suppose that it is non explosive. Then $(X_t)_{t\geqslant 0}$  and $(Y_t)_{t\geqslant 0}$ have distributions $\PDMP(\varphi,\lambda^{(1)},Q^{(1)},\updelta_x)$ and  $\PDMP(\varphi,\lambda^{(2)},Q^{(2)},\updelta_y)$ respectively.
\end{lemma}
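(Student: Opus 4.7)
The plan is to use Proposition~\ref{prop:superposition} to recast the characteristics on $\msm^2$ in a form from which the first-coordinate dynamics can be read off, and then to use Construction~2 to identify the marginal $(X_t)_{t\geqslant0}$ as a $\PDMP(\varphi,\lambda_1,Q_1,\updelta_x)$. The statement for $(Y_t)_{t\geqslant0}$ follows by the symmetric argument, exchanging the roles of the two coordinates.

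The heart of the argument consists of two elementary observations which are direct consequences of \eqref{eq:synchrone-k0}--\eqref{eq:synchrone-rest}. First, $r_0(t,(x,y))+r_1(t,(x,y)) = (\lambda_1\wedge\lambda_2)(t,(x,y)) + (\lambda_1-\lambda_2)_+(t,(x,y)) = \lambda_1(t,x)$, which depends on $x$ only. Second, the first marginal of $K_i(t,(x,y),\cdot)$ equals $Q_1(t,x,\cdot)$ for $i\in\{0,1\}$ and equals $\updelta_x$ for $i=2$; for $i=0$ this uses that $K_0(t,(x,y),\cdot)$ is a transference plan of $Q_1(t,x,\cdot)$ and $Q_2(t,y,\cdot)$. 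Define a new mechanism $(\tilde\lambda_A,\tilde K_A)$ on $\msm^2$ by $\tilde\lambda_A(t,(x,y)) = \lambda_1(t,x)$ and $\tilde K_A = \tilde\lambda_A^{-1}[(\lambda_1\wedge\lambda_2)K_0+(\lambda_1-\lambda_2)_+ K_1]$ (extended by $\updelta_{(x,y)}$ where $\tilde\lambda_A=0$). Then the superposition condition \eqref{eq:prop-superposition} between the families $(r_i,K_i)_{i\in\iintLigne{0,2}}$ and $\{(\tilde\lambda_A,\tilde K_A),(r_2,K_2)\}$ is immediate, so Proposition~\ref{prop:superposition} yields that $(X_t,Y_t)_{t\geqslant0}$ has the same distribution as the PDMP with characteristics $(\varphi^{\otimes},(\tilde\lambda_A,\tilde K_A),(r_2,K_2))$ and initial law $\updelta_{(x,y)}$.

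Build this equivalent PDMP via Construction~2. Since mechanism B leaves the first coordinate unchanged and the flow $\varphi^{\otimes}$ is diagonal, the semigroup identity $\varphi_{s,u}\circ\varphi_{t,s}=\varphi_{t,u}$ propagates $X_t = \varphi_{0,t}(x)$ across any intervening B-firing, so $X_t = \varphi_{0,t}(x)$ holds on $[0,T_1^A)$, where $T_1^A$ is the first mechanism-A firing time. Because Construction~2 decrements but does not reset the A-clock at every B-firing, one obtains $T_1^A = \inf\{t: E_{A,1} < \int_0^t \lambda_1(s,\varphi_{0,s}(x))\rmd s\}$, the first jump time of a $\PDMP(\varphi,\lambda_1,Q_1)$ started at $x$; moreover $X_{T_1^A}$ is sampled from the first marginal of $\tilde K_A(T_1^A,(X_{T_1^A-},Y_{T_1^A-}),\cdot)$, which equals $Q_1(T_1^A,X_{T_1^A-},\cdot)$. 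The strong Markov property of the non-explosive joint PDMP and an immediate induction then give that the embedded chain of $(X_t)_{t\geqslant0}$ at the successive mechanism-A firings has the transition kernel \eqref{eq:carac_chaine_markov_1} of a $\PDMP(\varphi,\lambda_1,Q_1,\updelta_x)$, so the two processes agree in law on $\mrd(\rset_+,\msm\cup\{\infty\})$. The main obstacle is precisely this last piece of bookkeeping: while it is intuitively clear that mechanism~B is invisible from the $X$-side, to turn this into a rigorous identification of the inter-arrival distribution one must simultaneously invoke the flow semigroup (to keep $X_t$ on its deterministic track through phantom $Y$-jumps) and the memoryless update rule of Construction~2 (to keep the A-clock accumulating at the rate $\lambda_1(t,X_t)$ across those phantom jumps), exactly as if $Y$ were absent.
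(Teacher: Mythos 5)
Your proposal is correct and follows essentially the same route as the paper's proof: combine $(r_0,K_0)$ and $(r_1,K_1)$ into a single mechanism with rate $r_0+r_1=\lambda_1$ via Proposition~\ref{prop:superposition}, note that $(r_2,K_2)$ only produces fantom jumps in the first coordinate, then use the flow semigroup and the clock-accumulation rule of Construction~2 to identify the embedded chain at type-A firings with that of a $\PDMP(\varphi,\lambda_1,Q_1,\updelta_x)$, checking that the first marginal of $\tilde K_A$ is $Q_1$. The symmetric argument handles $(Y_t)_{t\geqslant0}$, exactly as in the paper.
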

As a consequence,  $(X_t,Y_t)_{t\geqslant 0}$ is referred to as  a synchronous coupling of $(\updelta_xP_{0,t}^{(1)})_{t\geqslant 0}$ and $(\updelta_xP_{0,t}^{(2)})_{t\geqslant 0}$ .
 
 \begin{proof}
   We only show the result for $(X_t)_{t\geqslant0}$, the case for $(Y_t)_{t\geqslant0}$ being similar. Consider the Markov kernel on $\msm^2 \times \mcb{\msm^2}$ defined for all $(x,y)\in \msm^2$ and $\msa \in \mcb{\msm^2}$ by
   \begin{equation*}
     \tilde K(t,(x,y),\msa)  =
     \begin{cases}
&              \updelta_{(x,y)}(\msa) \qquad \qquad \qquad  \text{if } r_0(x,y)+r_1(x,y)  = 0\\
 &      \fraca{r_0(x,y)K_0(t,(x,y),\msa)+r_1(x,y)K_1(t,(x,y),\msa)}{r_0(x,y)+r_1(x,y)} \qquad 
  \text{otherwise} \eqsp.
     \end{cases}
   \end{equation*}
 \tcr{   By \Cref{prop:superposition} and since $\lambda^{(1)}= r_0+r_1$,  we have that $\PDMP(\varphi^{\otimes },\lambda^{(1)},\tilde K,r_2,K_2, \updelta_x \otimes \updelta_y) = \PDMP(\varphi^{\otimes },$ $(r_i,K_i)_{i \in \iint{0,2}},  \updelta_x \otimes \updelta_y)$. As a consequence,  we can assume that $(X_t,Y_t)_{t\geqslant 0}$ is a PDMP with characteristics $(\varphi^{\otimes },\lambda^{(1)},\tilde K,r_2,K_2)$ with initial distribution $\updelta_{(x,y)}$ defined by \Cref{const:2} based on some \rvs~$(E_{j,k},$ $U_{j,k})_{j\in\iintLigne{1,2},k\in \nset}$. We are now able to show that $(X_t)_{t \geq 0}$ is distributed according to  $\PDMP(\varphi,\lambda^{(1)},Q^{(1)},\updelta_x)$.} 
 Let $((X_i',Y_i'),\tS_i,\tI_i)_{i\in\nset}$ be the embedded chain associated with $(X_t,Y_t)_{t\geqslant 0}$. Set $R_0 = 0$, $J_0=0$ and, for $k\in\nset$,
 \[ R_{k+1}  =  \inf\ensemble{\tS_i > R_k}{i\in\nset,\ \tI_i=1}\eqsp, \qquad J_{k+1} =  \inf\ensemble{i > J_k}{\tI_i=1}  \eqsp. \]
 Note that $R_{k} = S_{J_k}$ if $J_k < \plusinfty$ and the process being non-explosive, $\sup_{n \in \nset} \tS_n = \plusinfty$, so that $\sup_{n \in \nset} R_n = \plusinfty$.
 For all $k\in\mathbb N$, set $\bX_k = X_{R_k}$  if $R_k<\infty$ and $\bX_k = \infty$ otherwise. 
 For $k \in \nset$, on $\{R_k < \plusinfty\}$, by  definition of $K_2$ and $\varphi^\otimes$, an easy induction
 using   implies that for any $j\in\iint{J_k,J_{k+1}-1}$, $t \in \coint{S_{j},S_{j+1}}$, 
$X_t =\varphi_{S_{J_j},t}(\bX_{S_{J_j}}) =  \varphi_{R_k,t}(\bX_k)$, and 
 \begin{equation*}
   H_{1,j+1} = H_{j,1} - \int_{S_j}^{S_{j+1}} \lambda^{(1)}( s, \varphi_{S_j,s}(X_{S_j})) \rmd s  = E_{k,1} -  \int_{R_k}^{S_{j+1}} \lambda^{(1)}( s, \varphi_{R_k,s}(\bX_k)) \rmd s \eqsp.
 \end{equation*}
Therefore for all $k\in\nset$ such that $R_k<\infty$ and all $t\in [R_k,R_{k+1})$, $X_t = \varphi_{R_k,t}(\bX_k)$ and
 \begin{equation}\label{eq:synchrone1}
R_{k+1}   =  \inf \ensemble{ t\geqslant R_k}{ E_{1,k+1} < \int_{R_k}^t \lambda^{(1)}( s, \varphi_{R_k,s}(\bX_k)) \rmd s }\eqsp,
\end{equation}
Finally, if $R_{k+1}<\infty$, denoting by $\bfG_1$, $\bfG_2$ the representation of $\tilde K$ and $K_2$ respectively, used in the construction of the process, for any $j \in \iint{J_k,J_{k+1}-2}$
\begin{equation}\label{eq:synchrone2}
  \begin{aligned}
(X_{S_{j+1}},Y_{S_{j+1}}) & =   \bfG_2( S_{j+1},( \varphi_{R_k,S_{j+1}}(\bX_k),Y_{S_{j}}),U_{2,j+1}) \\
(X_{R_{k+1}},Y_{R_{k+1}})  &= \bfG_1( R_{k+1},( \varphi_{R_k,R_{k+1}}(\bX_k),Y_{S_{J_{k+1}-1}}),U_{1,k+1})\eqsp.    
  \end{aligned}
\end{equation}
It follows that $U_{1,k+1}$ and $E_{1,k+1}$ are independent of $Y_{S_{J_{k+1}-1}}$ and $\mcf_k$ where $(\mcf_{\tilde{k}})_{\tilde{k} \in \nset}$ is the filtration associated with
$(\bX_k,R_k)_{k \in \nset}$. Using that $E_{1,k+1}$ and $U_{1,k+1}$ are independent, \eqref{eq:synchrone1} and \eqref{eq:synchrone2} yield, for all $k\in\nset$, $t\geqslant R_k$ and $\msa\in\mcb{\msm}$,
\begin{align*}
  &\CPP{\bX_{k+1}\in \msa, R_{k+1} \leq t}{\mcf_k}{} \\
  & =  \1_{\msm}(\bX_k) \int_{R_k}^{t}  \CPP{  \bfG_1(s,(\bX_{k+1},Y_{S_{J_{k+1}-1}} )),U_{1,k+1}) \in \msa}{\mcf_k\vee \sigma(Y_{S_{J_{k+1}-1}})}{}  \\
  & \qquad \qquad \qquad \qquad \qquad \times \lambda^{(1)}(s,\varphi_{R_k,s}(\bX_k)) \, \exp\defEns{-  \int_{R_k}^s   \lambda^{(1)}(u,\varphi_{R_k,u}(\bX_k)) \rmd u} \rmd s \eqsp.
\end{align*}
 Now, for all $k\in\nset$,  $t\geqslant 0$, $(x,y)\in\msm^2$ and $\msa\in\mcb{\msm}$, 
\begin{align}\label{eq:synchrone3}
&  \lambda^{(1)}(t,x) \mathbb P ( \bfG_1(t,(x,y),U_{1,k+1}) \in \msa\times \msm ) \\
  \notag & \qquad \qquad = r_0( t,(x,y)) K_0( t,(x,y),\msa\times\msm) + r_1( t,(x,y)) K_1( t,(x,y),\msa\times\msm)\\
\notag  & \qquad \qquad  = ( r_0( t,(x,y)) + r_1( t,(x,y))) Q^{(1)}(t,x,\msa) = \lambda^{(1)}(t,x) Q^{(1)}(t,x,\msa)\eqsp.
\end{align}
for all $k\in\nset$, $t\geqslant R_k$ and $\msa\in\mcb{\msm}$, implying
\begin{multline*}
  \CPP{\bX_{k+1}\in \msa, R_{k+1} \leq t}{\mcf_k}{} \\
   = \1_{\msm}(\bX_k) \int_{R_k}^{t}  Q^{(1)}(s,x,\msa)  \lambda^{(1)}(s,\varphi_{R_k,s}(\bX_k)) \, \exp\defEns{-  \int_{R_k}^s   \lambda^{(1)}(u,\varphi_{R_k,u}(\bX_k)) \rmd u} \rmd s\eqsp.
\end{multline*}

 As a consequence, $(\bX_k,R_k)_{k\in\nset}$ is the embedded chain corresponding to a PDMP with characteristics $(\varphi,\lambda^{(1)},Q^{(1)})$. The fact that $X_t = \varphi_{R_k,t}(\bX_k)$ for all $k\in\nset$ such that $R_k<\infty$ and all $t\in \coint{R_k,R_{k+1}}$ concludes the proof.
 \end{proof}

 \begin{proof}[Proof of \Cref{prop:synchrone}]
\begin{enumerate}[leftmargin=*,wide=0pt,label=(\Alph*)]   
\item We first consider the case where \eqref{eq:prop:synchrone_1} holds. 
The proof is divided in two main steps. In the first one, we assume that $\lambda^{(1)}$ and $\lambda^{(2)}$ are uniformly bounded and the second one is the extension of the first result in the case where the jump rates are not bounded. 

\begin{enumerate}[leftmargin=*,wide=0pt,label=(\arabic*)]
\item Assume  that $\normLigne{\lambda^{(i)}}_{\infty} \leq M$, for
  $i=1,2$ and some $M >0$.  Let $(P^{(1)}_{s,t})_{ t \geq s \geq 0}$ and
  $(P^{(2)}_{s,t})_{ t \geq s \geq 0}$ be two PDMPs semigroups with
  characteristics $(\varphi,Q^{(1)},\lambda^{(1)})$ and
  $(\varphi,Q^{(2)},\lambda^{(2)})$ respectively. Since $\lambda^{(1)}$ and $\lambda^{(2)}$
  are uniformly bounded,  the synchronous characteristics  
  $(\varphi,(r_i,K_i)_{i \in \iint{0,2}})$, where
  $(r_i,K_i)_{i \in \iint{0,2}}$ is defined in \eqref{eq:synchrone-k0} and \eqref{eq:synchrone-rest},
  are non explosive. From \Cref{lem:verif-synchrone}, the
  synchronous coupling $(X_t,Y_t)_{t \geq 0}$ defined above is a Markov coupling between
  these two semigroups. Then, by
  characterisation of the total variation distance for any $x \in \msm$ and $t \geq 0$,
  \begin{equation*}
    \tvnorm{\updelta_x P_{0,t}^{(1)}- \updelta_x P_{0,t}^{(2)}}  \leq 2 \proba{X_t=Y_t} \leq 2 \proba{X_s=Y_s \, \text{ for any $s \in \ccint{0,t}$}} \eqsp,
  \end{equation*}
  where $(X_t,Y_t)_{t \geq 0}$ has distribution $\PDMP(\varphi^{\otimes},(r_i,K_i)_{i \in\iint{0,2}},\updelta_x^{\otimes 2})$. 
 However, to do so, we consider a process $(X_t,Y_t)_{t \geq 0}$ based on different characteristics from
  $(\varphi^{\otimes} ,(r_i,K_i)_{i \in \iint{0,2}})$ but still having the expected distribution using  \Cref{prop:superposition}.

Define the Markov kernels, for all $(x,y) \in \msm^2$ and $(\msa,\msb) \in \mathcal{B}(\msm)^{2}$, dropping the subscript $\msm$ for the diagonal $\Delta$
\begin{align*}
  K_{0,\Delta} (t,(x,y),\msa \times \msb) &=
                        \begin{cases}
                         \dfrac{ K_0(t,(x,y),\msa \times \msb \cap \Delta)}{ K_0(t,(x,y), \Delta) } & \text{ if }  K_0(t,(x,y), \Delta) \not = 0 \\
                          \updelta_{(x,y)}(\msa \times \msb) & \text{ otherwise} \eqsp,
                        \end{cases}
\\
  K_{0,\neq} (t,(x,y),\msa \times \msb) &=
                      \begin{cases}
                       \dfrac{ K_0(t,(x,y),\msa \times \msb \cap \Delta^{\comp})}{ K_0(t,(x,y), \Delta^{\comp})}  & \text{ if }  K_0(t,(x,y), \Delta^{\comp}) \not = 0 \\
                        \updelta_{(x,y)}(\msa \times \msb) & \text{ otherwise} \eqsp,
                      \end{cases}
\\
  K_{1,\Delta}(t,(x,y),\msa \times \msb) & = 
                         \begin{cases}
                         \dfrac{  K_1(t,(x,y),(\msa\cap\{y\} )\times \msb) }{K_1(t,(x,y),\{y\}\times \msm)} & \text{ if }  K_1(t,(x,y),\{y\}\times \msm) \not = 0 \\
                          \updelta_{(x,y)}(\msa\times \msb) & \text{ otherwise} \eqsp,
                        \end{cases}
  \\
    K_{1,\neq}(t,(x,y),\msa \times \msb) & = 
                         \begin{cases}
                        \dfrac{   K_1(t,(x,y),(\msa\setminus\{y\} )\times \msm) }{ K_1(t,(x,y),(\msm \setminus\{y\}) \times \msm)}  & \text{ if }  K_1(t,(x,y), (\msm \setminus \{y\}) \times \msm) \not = 0 \\
                          \updelta_{(x,y)}(\msa \times \msb) & \text{ otherwise} \eqsp,
                        \end{cases}
  \\
  K_{2,\Delta}(t,(x,y),\msa\times \msb) & = 
                         \begin{cases}
                         \dfrac{  K_2(t,(x,y),\msa \times( \msb\cap\{x\}) )}{ K_2(t,(x,y),\msm \times \{x\})} & \text{ if }  K_2(t,(x,y),\msm \times  \{x\}) \not = 0 \\
                          \updelta_{(x,y)}(\msa \times \msb) & \text{ otherwise} \eqsp,
                         \end{cases}
\\
    K_{2,\neq}(t,(x,y),\msa\times \msb) & = 
                         \begin{cases}
                         \dfrac{  K_2(t,(x,y),\msa \times  (\msb\setminus\{x\})) }{ K_2(t,(x,y),\msm \times  (\msm \setminus\{x\}))} & \text{ if }  K_2(t,(x,y), \msm \times (\msm \setminus \{x\})) \not = 0 \\
                          \updelta_{(x,y)}(\msa \times \msb) & \text{ otherwise} \eqsp.
                        \end{cases}
\end{align*}
Define also the rate jumps for all $(x,y) \in \msm^2$ by 
\begin{align*}
&  \ttr_{0,\Delta}(t,(x,y)) = K_0(t,(x,y),\Delta) \ttr_0(x,y) \eqsp, \, \ttr_{0,\neq}(t,x,y) =  K_0(t,(x,y),\Delta^{\comp}) \ttr_0(x,y) \eqsp, \\
&  \ttr_{1,\Delta}(t,(x,y)) = K_1(t,(x,y),\{y\} \times \msm) \ttr_1(t,(x,y)) \eqsp, \, \ttr_{1,\neq}(t,,(x,y)) =  K_1(t,(x,y),\{y\}^{\compl} \times \msm)  \ttr_1(t,(x,y))\eqsp,\\
 & \ttr_{2,\Delta}(t,,(x,y)) = K_2(t,(x,y), \msm \times \{x\}) \ttr_2(t,(x,y)) \eqsp, \, \ttr_{2,\neq}(t,(x,y)) =  K_2(t,(x,y), \msm \times \{x\}^{\compl})  \ttr_2(t,(x,y)) \eqsp.
\end{align*}
By \Cref{prop:superposition}, for any initial distribution $\mu_0$ on
$\msm^2$,
$$\PDMP(\varphi^{\otimes},(r_i,K_i)_{i \in \iint{0,2}}, \mu_0) =
\PDMP(\varphi^{\otimes},(r_{i,\circ},K_{i,\circ})_{i \in
  \iint{0,2},\circ \in \{\Delta,\neq\}}, \mu_0)\eqsp.$$
Let
$(X_t,Y_t)_{t \geq 0}$ be a PDMP associated with the characteristics $(\varphi, (r_{i,\circ},K_{i,\circ})_{i \in
  \iint{0,2},\circ \in \{\Delta,\neq\}})$ with initial distribution $\updelta_x^{\otimes 2}$, $x \in \msm$, and let
$(S^{\neq,i}_n)_{n \in \nset ,\, i \in \iint{0,2}}$ be the jump times
associated with the jump rates
$r_{0,\neq},r_{1,\neq},r_{2,\neq}$ respectively.
By \Cref{lem:verif-synchrone} since $(X_t,Y_t)_{ t \geq 0}$ is non-explosive and \Cref{prop:premier_bounce}, we get
for all $t \geq 0$
\begin{align*}
&  \proba{X_t \not = Y_t}  \leq 1-\proba{\min_{i\in \iint{0,2}} S_1^{\neq,i} \geq t, X_s = Y_s \, \text{ for all $s\in \ccint{0,t}$}} \\
  & \leq 1 - \expe{ \exp\parentheseDeux{-\int_{0}^t \defEns{r_{0,\neq}(s,(\bar X_s, \bar  X_s)) + r_{1,\neq}(s,(\bar  X_s, \bar X_s)) + r_{2,\neq}(s,(\bar  X_s, \bar X_s))} \rmd s   }} \eqsp,
\end{align*}
where $(\bar X_s,\bar X_s)_{s \geq 0}$ is a PDMP with characteristics $(\tvarphi, (r_{i,\Delta},K_{i,\Delta})_{i \in \iint{0,2}})$ starting at $(x,x)$.
This result concludes the proof since by definition, \eqref{eq:synchrone-k0}, \eqref{eq:synchrone-rest} and \eqref{eq:prop:synchrone_1}, for all $y \in \msm$ and $s \geq 0$,
we have 
\begin{equation*}
r_{0,\neq}(s,(y,y)) + r_{1,\neq}(s,(y,y)) + r_{2,\neq}(s,(y,y)) \leq g(s) \eqsp.
\end{equation*}
\item In the case where $\lambda^{(1)}$ and $\lambda^{(2)}$ are not uniformly bounded, consider for all $M >0$ the two semi-groups $(P_{s,t}^{(1),M})_{t\geq s \geq 0}$ and  $(P_{s,t}^{(2),M})_{t\geq s \geq 0}$ associated with the characteristics
  $(\varphi, \lambda^{(1)} \wedge M, Q^{(1)})$ and   $(\varphi, \lambda^{(2)} \wedge M, Q^{(2)})$ respectively.
  Then for all $M >0$  the triangle inequality yields
  \begin{equation*}
    \tvnorm{\updelta_x P_{0,t}^{(1)}-\updelta_x P_{0,t}^{(2)}} \leq  \tvnorm{\updelta_x P_{0,t}^{(1)}- \updelta_x P_{0,t}^{(1),M}} +  \tvnorm{\updelta_x P_{0,t}^{(1),M}-\updelta_x P_{0,t}^{(2),M}} +  \tvnorm{\updelta_x P_{0,t}^{(2),M}- \updelta_x P_{0,t}^{(2)}} \eqsp.
  \end{equation*}
  Using \Cref{prop:non-explosion1} and the assumption that the semi-groups we consider are non-explosive,
  \begin{equation}
  \label{eq:proof_fin_synch_1}
    \tvnorm{\updelta_x P_{0,t}^{(1)}-\updelta_x P_{0,t}^{(2)}} \leq \limsup_{M \to \plusinfty}     \tvnorm{\updelta_x P_{0,t}^{(1),M}-\updelta_x P_{0,t}^{(2),M}} \eqsp.
  \end{equation}
On the other hand, by the first part of the proof for all $M >0$,
\begin{equation}
  \label{eq:proof_fin_synch_2}
    \tvnorm{\updelta_x P_{0,t}^{(1),M}-\updelta_x P_{0,t}^{(2),M}} \leq  2 \defEns{ 1 - \exp\parenthese{-\int_0^t g(s)\rmd s} } \eqsp,
  \end{equation}
where $g$ satisfies \eqref{eq:prop:synchrone_1}  since for all $t \in \rset_+$ and $M>0$,
\begin{multline*}
   g(t) \geq  \sup_{\substack{x \in \msm,\\ \msa \in\mcb{\msm}}} \{
M\wedge  \lambda^{(1)}(t,x)\wedge \lambda^{(2)}(t,x)
  \absLigne{Q^{(1)}(t,x,\msa)-Q^{(2)}(t,x,\msa)}\} \\+ \sup_{x \in \msm} \absLigne{ M \wedge \lambda^{(1)}(t,x)- M \wedge \lambda^{(2)}(t,x)} \eqsp.
\end{multline*}
Combining \eqref{eq:proof_fin_synch_1} and \eqref{eq:proof_fin_synch_2} concludes the proof.
\end{enumerate}
\item Let us finish the proof by assuming \eqref{eq:prop:synchrone_(2)} and showing that the conclusion of \Cref{prop:synchrone} still holds. Indeed by \Cref{prop:superposition}, for all initial distribution $\mu_0 \in \mcp(\msm)$, $\PDMP(\varphi,Q^{(1)},\lambda^{(1)},\mu_0) = \PDMP(\varphi,\tilde{Q}^{(1)},\tilde{\lambda}^{(1)},\mu_0)$ and $\PDMP(\varphi,Q^{(2)},\lambda^{(2)},\mu_0) = \PDMP(\varphi,\tilde{Q}^{(2)},\tilde{\lambda}^{(2)},\mu_0)$ where $\tilde{\lambda}^{(1)}  =\tilde{\lambda}^{(2)} = \lambda^{(1)} \vee \lambda^{(2)}$ and for $t \in \rset_+$, $x \in \msm$, $\msa \in \mcb{\msm}$ and $i=1,2$
\begin{equation*}
  \tilde{Q}^i(t,x,\msa) = \frac{\lambda^{(i)}(t,x)}{\lambda^{(1)}(t,x) \vee \lambda^{(2)}(t,x) } Q^i(t,x,\msa) + \parenthese{1-\frac{\lambda^{(i)}(t,x)}{\lambda^{(1)}(t,x) \vee \lambda^{(2)}(t,x) }} \updelta_x(\msa) \eqsp.
\end{equation*}
Therefore,
$(P_{s,t}^{(1)})_{t\geq s \geq 0}$ and $(P_{s,t}^{(2)})_{t\geq s \geq 0}$ are also associated with $(\varphi,\tilde{Q}^{(1)},\tilde{\lambda}^{(1)})$ and $(\varphi,\tilde{Q}^{(2)},\tilde{\lambda}^{(2)})$. Applying the case where $g$ is given by \Cref{eq:prop:synchrone_1} to these characteristics concludes.
\end{enumerate}
\end{proof}


\section{Generator}
\label{sec:generator}
From this section, only homogeneous processes are
considered. Nevertheless, some results below can be applied to inhomogeneous
PDMP since if $(X_t)_{t\geq 0}$ is such a process on $\msm$ with
characteristics $(\varphi,Q,\lambda)$, then the process
$(X_t,t)_{t \geq 0}$ is a homogeneous PDMP on
$(\msm \times \rset_+, \mcbb(\msm \times \rset_+))$ with
characteristics $(\bar{\varphi},\bar{Q},\lambda)$
defined for all $t,s \in \rset_+$, $x \in \msm$ and $\msa \in \mcbb(\msm \times \rset_+)$ by
\begin{equation*}
  \bar{\varphi}_{s}((x,t)) = (\varphi_{t,t+s}(x),t+s)  \eqsp, \qquad \qquad \bar{Q}((x,t),\msa) = \int_{\msm\times \rset_+} \1_{\msa}((y,u))  Q(x,\rmd y) \otimes \updelta_t(\rmd u) \eqsp.
\end{equation*}

This section is devoted to the introduction of the strong and extended generators of a non-explosive PDMP, \tcr{which will be a central tool for the study of invariant measures (see \Cref{theo:core_smooth_approx} and \Cref{lem:critere-invariant} below).}

Consider a homogeneous PDMP semigroup $(P_t)_{t \geq 0}$ with non-explosive characteristics $(\varphi,\lambda,Q)$.
Note that $(P_t)_{t \geq 0}$ is a contraction semigroup on $\mrb(\msm)$, \ie~for all $s,t \in \rset_+$, $P_{s+t} = P_t P_s$ and for all function $f \in \mrb(\msm)$, $\norm{P_tf}_{\infty} \leq  \norm{f}_{\infty}$. In addition, define the subset $\mrb_0(\msm) \subset \mrb(\msm)$ by
\begin{equation}
  \label{eq:def_mrb_0}
 \mrb_0(\msm) = \ensemble{f \in \mrb(\msm)}{\lim_{t \to 0}\norm{P_t f-f}_{\infty} = 0} \eqsp.
\end{equation}
By \cite[p.28-29]{davis:1993}, $\mrb_0(\msm)$ is a closed subspace of $\mrb(\msm)$ and a Banach space for the uniform norm. Then by definition, $(P_t)_{t \geq 0}$ is a strongly continuous semigroup on $\rmb_0(\msm)$, \ie~for all $f \in \mrb_0(\msm)$, $\lim_{t \to 0}\norm{P_t f-f}_{\infty} = 0$.

Define $(\sgenerator, \rmD(\sgenerator))$ the strong generator of $(P_t)_{t \geq 0}$ by
\begin{align*}
 \rmD(\sgenerator) &= \ensemble{f \in \mrb_0(\msm)}{\text{there exists $g: \msm \to \rset$ } \lim_{t \to 0} \norm{t^{-1}(P_tf-f)-g}_{\infty} = 0 } \eqsp,\\
 \sgenerator f &= g \text{ satisfying $\lim_{t \to 0} \norm{t^{-1}(P_tf-f)-g}_{\infty} = 0$ for all $f \in  \rmD(\sgenerator) $} \eqsp. 
\end{align*}
A subset $\msd \subset \mrD(\sgenerator)$ is a core of
$(\sgenerator,\mrD(\sgenerator))$ if the closure of the restriction of
$\sgenerator$ to $\msd$ is equal to $(\sgenerator,\mrD(\sgenerator))$.
The strong generator of $(P_t)_{t \geq 0}$ is a common tool to show that
a probability measure on $(\msm,\msb({\msm}))$ is invariant for
$(P_t)_{t \geq 0}$. Indeed by \cite[Proposition
9.2]{ethier:kurtz:1986}, $\mu$ is invariant measure for
$(P_t)_{t \geq 0}$ if only if for all $f \in \msd$, where $\msd$ is a
core for $(\sgenerator,\mrD(\sgenerator))$,
$\int_{\msm} \sgenerator f(x) \mu(\rmd x) = 0$.  Therefore, the strong generator
$(\sgenerator,\mrD(\sgenerator))$ is an essential tool to study
$(P_t)_{t \geq 0}$. Unfortunately, characterizing the domain
$\mrD(\sgenerator)$ is not possible in many cases of interest. In addition, while it
would be possible to only use a core of
$(\sgenerator,\mrD(\sgenerator))$, there \tcr{are} very few results giving
such a subset for PDMPs contrary to diffusion processes (see
\eg~\cite[Chapter 8]{ethier:kurtz:1986}). However, we will see in \tcr{
Section~\ref{eq:invariant_main}} that for a class of PDMPs, to show that a probability measure $\mu$ is
invariant, it is sufficient to show that for all
$f \in \mrc^1_c(\msm)$, $\int_{\msm} \generator f(x) \rmd \mu(x)=0$,
where $(\generator,\rmD(\generator))$ is the extended generator of $(P_t)_{t \geq 0}$, defined as follows. 

For $x\in\msm$, denote $\mathbb P_x$ the distribution $\PDMP(\varphi,\lambda,Q,\delta_x)$ on $\mrd(\rset_+,\msm )$ and $\mathbb{E}_x$ the corresponding expectation. Let $(\bX_t)_{t\geqslant 0}$ be the canonical process on $\mrd(\rset_+,\msm )$, defined by $\bX_t(\omega) = \omega_t$ for all $\omega \in \mrd(\rset_+,\msm )$, and let $(\mcf_t)_{t \geq 0}$ be its associated filtration. Let $\bS_0=0$ and, for $k\in\nset$, $\bS_{k+1} = \inf\{t>\bS_k\ : \ \bX_t \neq \varphi_{t-\bS_k}(\bX_{\bS_k})\}$ be its true jump times.
Define for all $t \in \rset_+$, $\bN_t = \sum_{k\in\nsets} \1_{\ccint{0,t}}(\bS_k)$ and consider the following assumption
\begin{assumption}
  \label{ass:poisson_proc_integ}
  For all $x \in \msm$ and $t \in \rset_+$, $\expeMarkov{x}{\bN_t}<\plusinfty$.
\end{assumption}
For all $t \geqslant 0$ and for all measurable functions $f,g : \msm \to \rset$, such that, for all $x\in\msm$,
$s \mapsto g(\bX_s)$ is $\mathbb P_x$-almost surely locally integrable, denote
\begin{equation}
\label{eq:def_martin_local_pdmp_generator}
  \martfg_t = f(\bX_t)-f(\bX_0)-\int_0^t g(\bX_s) \rmd s \eqsp.
\end{equation}
The (extended) generator and its domain $(\generator, \domain(\generator))$
associated with the semi-group $(P_t)_{t \geq 0}$ are defined as follows: $f \in
\domain(\generator)$ if there exists a measurable function $g : \msm \to \rset$
such that $(\martfg_t)_{t\geqslant0} $ is a local martingale under $\mathbb P_x$ for
all $x\in\msm$ and, for such a function, $\generator f = g$.
Despite its very formal definition, $(\generator,
\domain(\generator))$ associated with $(P_t)_{t \geq 0}$ can be easily
described. Indeed, under \Cref{ass:poisson_proc_integ},  \cite[Theorem 26.14]{davis:1993}  shows that $\domain(\generator) = \mse_1 \cap \mse_2$ where
\begin{equation*}
\mse_1 =\defEns{f \in \MeasFspace(\msm) \, : \, t \mapsto f(\varphi_t(x)) \text{ is absolutely continuous on $\rset_+$ for all $x\in\msm$} } \eqsp,
\end{equation*}
and $\mse_2$ is the set of measurable functions $f : \msm \to \rset$ such that there exists an increasing sequence of $(\mcf_t)_{t \geq 0}$-stopping time $(\sigma_n)_{n\geqslant 0}$, such that for all $x\in\msm$, $\lim_{n \to \plusinfty } \sigma_n = \plusinfty$ $\mathbb P_x$-almost surely and for all $n \in \nset$,
\begin{equation*}
  \expeMarkov{x}{\sum_{k = 0}^{\plusinfty} \1_{\{\bS_{k+1} \leq \sigma_n\}} \abs{f(\bX_{\bS_{k+1}} ) - f\po\varphi_{\tcr{\bS_{k+1}}-\bS_k}(\bX_{\bS_k})\pf} }< \plusinfty \eqsp.
\end{equation*}
Then, for all $f \in  \domain(\generator)$ and $x\in\msm$,
\begin{equation}\label{eq:def_generateur}
\generator f(x) \ =\ \Ddir{\varphi} f(x) + \lambda(x) \po Qf(x) - f(x)\pf \eqsp,
\end{equation} 
where
\begin{equation*}
  \Ddir{\varphi} f(x) =
  \begin{cases}
    \lim_{t \, \downarrow \,  0} t^{-1}\defEns{f(\varphi_t(x)) - f(x)} \eqsp, &\text{ if this limit exists} \\
    0 & \text{ otherwise} \eqsp.
  \end{cases}  
\end{equation*}
In fact, in  \cite[Theorem 26.14]{davis:1993}, $Q$ is required to satisfy $Q(x,\{x\})=0$ for all $x\in\msm$, but if it is not the case, from \Cref{prop:superposition}, we can apply this result with the minimal jump rate associated to $(\lambda,Q)$ such as introduced in \Cref{sec:superposition}.

Note that $\rmD(\sgenerator) \subset \rmD(\generator)$ and for all $f \in \mrD(\sgenerator)$, $\generator f = \sgenerator f$,  since by \cite[Proposition 14.13]{davis:1993}, for all $f \in \mrD(\sgenerator)$, $(M^{f,\sgenerator f}_t)_{t \geq 0}$ is a $(\mcf_t)_{t \geq 0}$-martingale.

In addition, $\rmc^1(\msm) \subset \domain(\generator)$ and, if $f\in\rmc_c^1(\msm)$, then $\generator f$ is bounded, therefore $(M^{f,\generator f}_t)_{t \geq 0}$ is a $(\mcf_t)_{t \geq 0}$-martingale. Moreover, since we supposed that $b(x)=(\partial_t)_{t=0} \varphi_t(x)$ exists for all $x\in\msm$, then $\Ddir{\varphi} f(x) = \ps{b(x)}{\diff f(x)}$ for all $f\in\rmc^1(\msm)$ and $x \in \msm$. However, we need some conditions on $\lambda$ and $Q$ to show that 
that $\rmc^1_c(\msm) \subset \domain(\sgenerator)$.
\begin{assumption}
  \label{ass:compact_sup}
    Let $(P_t)_{t \geq 0}$ be a non explosive PDMP semi-group with 
    characteristics $(\varphi,\lambda,Q)$. Assume that for all $T \geq 0$, there exists $M \geq 0$ such that for all $x \in \msm$ and $t \in \ccint{0,T}$, $\supp\{P_t(x,\cdot)\} \subset \cball{x}{M}$.
\end{assumption}
\begin{lemma}
  \label{lem:compact_sup}
  Assume \Cref{ass:compact_sup}.
  \begin{enumerate}[label=(\alph*)]
  \item \label{lem:compact_sup_a}  For all $f \in \rmc_c(\msm)$, $T \in \rset_+$, there exists a bounded set $\msa$ such that  $P_t f (x) = 0$, for all $x \not \in \msa$ and $t \in \ccint{0,T}$.
  \item \label{lem:compact_sup_b} Condition \Cref{ass:poisson_proc_integ} is satisfied. 
  \end{enumerate}
\end{lemma}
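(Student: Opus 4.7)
The plan is to treat the two parts separately and rely only on \Cref{ass:compact_sup} together with local boundedness of $\lambda$.

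For \ref{lem:compact_sup_a}, let $f\in\rmc_c(\msm)$, set $K=\supp(f)$ (a compact, hence bounded, subset of $\msm$), and fix $T\geq 0$. By \Cref{ass:compact_sup} there exists $M\geq 0$ such that $\supp\{P_t(x,\cdot)\}\subset\cball{x}{M}$ for all $x\in\msm$ and $t\in\ccint{0,T}$. Define
\begin{equation*}
\msa\ =\ \ensemble{x\in\msm}{\dist(x,K)\leq M}\eqsp,
\end{equation*}
which is bounded since $K$ is compact. For any $x\notin\msa$ we have $\cball{x}{M}\cap K=\emptyset$, so $P_t(x,\cdot)$ is concentrated on a set disjoint from $\supp(f)$, giving $P_tf(x)=\int f(y)P_t(x,\rmd y)=0$ for all $t\in\ccint{0,T}$.

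For \ref{lem:compact_sup_b}, fix $x\in\msm$ and $T>0$; I will show $\expeMarkov{x}{\bN_T}<\plusinfty$. By \Cref{ass:compact_sup} applied with the same $T$, there is $M\geq 0$ with $\supp\{P_t(x,\cdot)\}\subset\cball{x}{M}$ for all $t\in\ccint{0,T}$. Consequently, $\mathbb P_x$-almost surely, $\bX_t\in\cball{x}{M}$ for every $t\in\ccint{0,T}$. Since $\lambda$ is locally bounded, set
\begin{equation*}
L\ =\ \sup_{y\in\cball{x}{M}}\lambda(y)\ <\ +\infty\eqsp.
\end{equation*}
Now realise $(\bX_t)_{t\geq 0}$ under $\mathbb P_x$ via \Cref{const:1} (with $\ell=1$ and the minimal jump mechanism associated to $(\lambda,Q)$, so that every jump time is a true jump, by \Cref{prop:superposition}), on some probability space supporting i.i.d.~exponential variables $(E_k)_{k\geq 1}$ with parameter $1$. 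From \eqref{eq:def_s_j_const_1}, on the (almost sure) event $\{\bX_s\in\cball{x}{M}\ \text{for all}\ s\in\ccint{0,T}\}$, for each $k\in\nset$ with $\bS_{k+1}\leq T$,
\begin{equation*}
E_{k+1}\ <\ \int_{\bS_k}^{\bS_{k+1}}\lambda(\bX_s)\,\rmd s\ \leq\ L(\bS_{k+1}-\bS_k)\eqsp,
\end{equation*}
so that $\bS_{k+1}-\bS_k\geq E_{k+1}/L$ for every such $k$.

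It follows that $\bN_T\leq\tilde N_T$ where $\tilde N_T=\sum_{k\geq 1}\1_{\ccintLigne{0,T}}\bigl(L^{-1}\sum_{i=1}^k E_i\bigr)$ is the counting process of a standard Poisson process of rate $L$ evaluated at time $T$, so $\expeMarkov{x}{\bN_T}\leq LT<\plusinfty$. The main (very small) obstacle is the bookkeeping needed to ensure $(\bS_k)_{k\in\nset}$ are the true jumps; this is handled by first reducing, via \Cref{prop:superposition}, to the minimal jump mechanism $(\lambda_\mrm,Q_\mrm)$ whose associated PDMP has no fantom jumps and whose rate $\lambda_\mrm\leq\lambda$ is still bounded by $L$ on $\cball{x}{M}$.
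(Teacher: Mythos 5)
Your proof is correct and takes essentially the same approach as the paper: part (a) is the same translation-of-supports argument, and part (b) uses the same confinement of the process to a compact ball up to time $T$ together with a Poisson-process domination via local boundedness of $\lambda$. The only (harmless) variation is in (b): the paper directly bounds $\bN_T$ by the count $N_T$ of \emph{all} jump times of \Cref{const:1} for the original $(\lambda,Q)$, including fantom ones, whereas you first pass to the minimal jump mechanism so that every jump of the construction is a true jump --- both routes yield the same estimate.
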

\begin{proof}
\begin{enumerate}[label=(\alph*),wide, labelwidth=!, labelindent=0pt]
\item 
Let $f \in \rmc_c(\msm)$, $T \in \rset_+$. By assumption, there exist $M_f,M_T \in \rset_+$ such that $\supp(f) \subset \ball{x_0}{M_f}$, $x_0 \in \msm$, and $\supp\{P_t(x,\cdot)\} \subset \cball{x}{M_T}$ for all $t \in \ccint{0,T}$, $x \in \msm$. Therefore, we get that, for all  $x \not \in \cball{x_0}{M_T+M_f+1}$, $$P_tf(x) = \int_{\msm}\1_{\cball{x}{M_T}\cap \ball{x_0}{M_f}}(y) f(y) P_t(x,\rmd y) = 0 \eqsp.$$ Indeed, by construction $\cball{x}{M_T}\cap \ball{x_0}{M_f} = \emptyset$ since by the triangle inequality,  $\dist(x,y) \leq M_T$ implies that $\dist(x_0,y) \geq M_f+1$. 
\item  Let $(X_t)_{t\geq 0}$
  be a PDMP process with characteristics $(\varphi,\lambda,Q)$ started from $x \in \msm$, given  by \Cref{const:1}, with jump times $(S_k)_{k \in \nset}$. Note that by definition, for all $k \in \nset$, $S_k \leq \bS_k$, where $(\bS_k)_{k \in \nset}$ is the true jump times of the process. Therefore, defining $N_t = \sum_{k=1}^{\plusinfty} \1_{\ccint{0,t}}(S_k)$, we have for all $t \geq 0$, $\bN_t \leq N_t$, and to show that \Cref{ass:poisson_proc_integ} holds, it suffices to show that $\expeLigne{N_T} < \plusinfty$ for all $T \in \rset_+$ and $x\in\msm$.

  Let $T \geq 0$ and $M_T \geq 0$ be such that for all
  $t \in \ccint{0,T}$ and $y \in \msm$,
  $\supp\{P_t(y,\cdot)\} \subset \ball{y}{M_T}$.
  Then, since $(X_t)_{t \geq 0 }$ is a càdlàg process,  almost surely, for all $t \in \ccint{0,T}$, $X_t \in \cball{x}{M_T}$. Therefore, by \eqref{eq:def_s_j_const_1}, for all $k \in \nset$, and $t \in \ccint{0,T}$,
  \begin{equation*}
  \1_{\ccint{0,t}}(S_{k+1})  (S_{k+1} - S_k)\geq E_{k+1}/(1+\norm{\lambda}_{\infty,\cball{x}{M_T}}) \eqsp.
\end{equation*}
Then, for all $t \in \ccint{0,T}$, $N_t$ is bounded by
$\sum_{k=1}^{\plusinfty}
\1_{\ccint{0,t}}(E_{k+1}/(1+\norm{\lambda}_{\infty,\ball{x}{M_T}}))$,
which is a Poisson process with rate $1+\norm{\lambda}_{\infty,\ball{x}{M_T}}$. Therefore, for all $t \in \ccint{0,T}$, $\expeLigne{N_t}  < \plusinfty$.
\end{enumerate}
\end{proof}

\begin{proposition}
  \label{propo:c_1_c_sgenerator}
Assume \Cref{ass:compact_sup}.
\begin{enumerate}[label=(\alph*)]
 \item \label{propo:c_1_c_sgenerator_a} If $(t,x) \mapsto \varphi_t(x) \in \rmc^1(\rset_+ \times \msm, \msm)$, then   $\rmc_0(\msm) \subset \mrb_0(\msm)$, where $\mrb_0(\msm)$ is defined by \eqref{eq:def_mrb_0}.
\item \tcr{If} $(t,x) \mapsto \varphi_t(x) \in \rmc^1(\rset_+ \times \msm, \msm)$, $\lambda \in \rmc(\msm)$ and for all $f \in \rmc_c(\msm)$, $\lambda Qf \in \rmc_c(\msm)$, then $\rmc^1_c(\msm) \subset \domain(\sgenerator)$.
\end{enumerate}
\end{proposition}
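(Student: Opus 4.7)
\textbf{Plan for \ref{propo:c_1_c_sgenerator_a}.} Since $\mrb_0(\msm)$ is closed in $\mrb(\msm)$ for the uniform norm and $\rmc_c(\msm)$ is dense in $\rmc_0(\msm)$, it suffices to show $\rmc_c(\msm) \subset \mrb_0(\msm)$. Fix $f \in \rmc_c(\msm)$ and $T>0$, and apply \Cref{lem:compact_sup}\ref{lem:compact_sup_a} to produce a compact set $\msa$ outside of which $f$ and all $P_t f$, $t \in \ccint{0,T}$, vanish. The idea is to split, using the first jump time $S_1$ of \Cref{const:1},
\begin{equation*}
P_t f(x) - f(x)  =  (f(\varphi_t(x)) - f(x))\probaMarkov{x}{S_1 > t}+ \expeMarkov{x}{(f(X_t) - f(x))\indi{S_1 \leq t}} \eqsp,
\end{equation*}
and to estimate each term uniformly for $x \in \msa$. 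The first summand tends to $0$ uniformly by continuity of $\varphi$ (so that $\varphi_t(x)\to x$ uniformly in $x\in\msa$) combined with uniform continuity of $f$ on compact sets. The second is bounded by $2\norm{f}_{\infty}\probaMarkov{x}{S_1\leq t}$; using \Cref{ass:compact_sup} and a support argument, trajectories $s \mapsto \varphi_s(x)$ with $x\in\msa$ and $s\in\ccint{0,T}$ remain in a fixed compact set $\msk$, so local boundedness of $\lambda$ gives $\probaMarkov{x}{S_1\leq t}\leq 1 - \mre^{-Ct}$ with $C = \norm{\lambda}_{\infty,\msk}$, uniformly in $x\in\msa$.

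\textbf{Plan for (b).} I would proceed in three steps. First, part (a) yields $\rmc^1_c(\msm) \subset \rmc_0(\msm) \subset \mrb_0(\msm)$. Second, the discussion following \eqref{eq:def_generateur} together with \Cref{lem:compact_sup}\ref{lem:compact_sup_b} shows $\rmc^1_c(\msm) \subset \domain(\generator)$ with $\generator f(x) = \ps{b(x)}{\nabla f(x)} + \lambda(x)(Qf(x) - f(x))$, where $b(x) = \partial_t \varphi_t(x)\vert_{t=0}$ is continuous thanks to $\varphi \in \rmc^1(\rset_+\times\msm)$. Third, under the hypotheses of (b), both $\ps{b}{\nabla f}$ and $\lambda Qf - \lambda f$ lie in $\rmc_c(\msm)$, so $\generator f \in \rmc_c(\msm)\subset\mrb_0(\msm)$ by part (a). Since $\generator f$ is bounded, $(M^{f,\generator f}_t)_{t\geqslant 0}$ is a true martingale under each $\probaMarkov{x}{}$, whence Dynkin's formula $P_t f(x) - f(x) = \int_0^t P_s \generator f(x)\rmd s$ holds pointwise in $x$. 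The pointwise estimate
\begin{equation*}
\norm{t^{-1}(P_t f - f) - \generator f}_{\infty} \leq \sup_{s \in \ccint{0,t}}\norm{P_s \generator f - \generator f}_{\infty}
\end{equation*}
then tends to $0$ as $t\downarrow 0$ because $\generator f \in \mrb_0(\msm)$, and gives $f \in \domain(\sgenerator)$ with $\sgenerator f = \generator f$.

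\textbf{Expected main obstacle.} The key technical step throughout is converting pointwise estimates into uniform ones. In (a), this demands a careful use of \Cref{lem:compact_sup}\ref{lem:compact_sup_a} and \Cref{ass:compact_sup} to localize all trajectories in a single compact set on which $\lambda$ is uniformly bounded. In (b), the entire argument is engineered so that uniform convergence reduces, via Dynkin's formula, to the membership $\generator f \in \mrb_0(\msm)$; this is precisely why the continuity assumptions on $\lambda$, on $b$ (through $\varphi\in\rmc^1$) and on $\lambda Qf$ appear in the statement, as they are exactly what forces each term of $\generator f$ to be in $\rmc_c(\msm)$, thereby allowing part (a) to be reinvoked.
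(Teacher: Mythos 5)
Your plan for part~\ref{propo:c_1_c_sgenerator_a} coincides with the paper's proof: both reduce to showing $\rmc_c(\msm)\subset\mrb_0(\msm)$ (density of $\rmc_c$ in $\rmc_0$ plus closedness of $\mrb_0$), invoke \Cref{lem:compact_sup}-\ref{lem:compact_sup_a} to localize $x$ in a compact $\msk$, split on $\{S_1>t\}$ versus $\{S_1\leq t\}$, bound the second event's probability uniformly over $\msk$ via local boundedness of $\lambda$ on a compact thickening of $\msk$ provided by \Cref{ass:compact_sup}, and handle the deterministic term by uniform continuity of $f$ together with $\varphi\in\rmc^1$.

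For part (b), your route is genuinely different from, and cleaner than, the paper's. You observe that under the stated hypotheses $\generator f\in\rmc_c(\msm)$ (each of $\ps{b}{\nabla f}$, $\lambda Qf$, $\lambda f$ being continuous with compact support), so part (a) immediately gives $\generator f\in\mrb_0(\msm)$; Dynkin's identity $P_tf-f=\int_0^t P_s\generator f\,\rmd s$ then yields
\begin{equation*}
\norm{t^{-1}(P_tf-f)-\generator f}_\infty \leq \sup_{s\in\ccint{0,t}}\norm{P_s\generator f-\generator f}_\infty \longrightarrow 0\eqsp.
\end{equation*}
The paper starts from the same martingale identity but does not take this detour through part (a): it bounds $t^{-1}\expeMarkov{x}{\int_0^t(\generator f(\bX_s)-\generator f(x))\,\rmd s}$ directly, separating the event $\{\bS_1\leq t\}$ (whose probability is bounded by $C_2 t$ uniformly on $\msk$) and, on its complement, decomposing $\generator f(\varphi_s(x))-\generator f(x)$ into three pieces $A^1_s,A^2_s,A^3_s$ whose uniform smallness for small $s$ follows from uniform continuity of $\ps{b}{\nabla f}$, $\lambda Qf$, $\lambda f$ on a compact neighbourhood. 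Your reduction buys a shorter argument: the continuity hypotheses of (b) are invoked only once, to place $\generator f$ in $\rmc_c(\msm)$, and every uniform-in-$x$ estimate is inherited from part (a) rather than re-derived. When writing it up you should make explicit that the interchange turning $\expeMarkov{x}{\int_0^t\generator f(\bX_s)\,\rmd s}$ into $\int_0^t P_s\generator f(x)\,\rmd s$ is legitimate by Fubini because $\generator f$ is bounded, and that $(M^{f,\generator f}_t)_{t\geq 0}$ is a genuine (not merely local) martingale for the same reason; both facts rest on the hypothesis $\lambda Qf\in\rmc_c(\msm)$.
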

\begin{proof}
\begin{enumerate}[label=(\alph*),wide, labelwidth=!, labelindent=0pt]
\item

  We show that $\rmc_c(\msm) \subset \mrb_0(\msm)$ which is sufficient since the closure of $\mrc_{c}(\msm)$ for the uniform norm is $\rmc_0(\msm)$ and $\rmb_0(\msm)$ is a Banach space equipped with the topology endowed with this norm. Let   $f \in \rmc_c(\msm)$. By \Cref{lem:compact_sup}-\ref{lem:compact_sup_a}, there exists a compact set $\msk$ such that for any $x \not \in \msk$ and $t \in \ccint{0,1}$, $P_tf(x) =0$. Therefore we only need to show that $\lim_{t \to \plusinfty} \norm{P_t f -f}_{\infty,\msk} = 0$.
  Let $x \in \msk$ and consider  $(X_t)_{t\geq 0}$
  the PDMP process with characteristics $(\varphi,\lambda,Q)$ started from $x$ given  by \Cref{const:1}, with jump times $(S_k)_{k \in \nset}$.   Then by definition, we have for any $x \in \msk $,
  \begin{equation}
    \label{eq:compact_sup_c_1}
    \abs{P_tf(x) -f(x)} \leq \norm{f}_{\infty} \mathbb{P}\parenthese{S_1 \leq t} + \abs{f(\varphi_t(x))-f(x)} \eqsp. 
  \end{equation}
  On the other hand, let $M \geq 0$ be such that for all
  $t \in \ccint{0,1}$ and $y \in \msm$,
  $\supp\{P_t(y,\cdot)\} \subset \ball{y}{M}$, and define $\tmsk = \{ y \in \msm \, : \, \dist(y , \msk) \leq M\}$, which is bounded since $\msk$ is. Therefore, we get  by
  \eqref{eq:def_s_j_const_1} for $k=1$, 
  \begin{equation*}
  \1_{\ccint{0,t}}(S_{1})  S_{1} \geq E_{1}/(1+\norm{\lambda}_{\infty,\cball{x}{M}}) \geq E_{1}/(1+\norm{\lambda}_{\infty,\tmsk}) \eqsp,
\end{equation*}
where $\norm{\lambda}_{\infty,\tmsk} < \plusinfty$ because $\lambda$ is assumed to be locally bounded. From this result and \eqref{eq:compact_sup_c_1}, it follows that for any $x \in \msk $,
  \begin{equation}
    \label{eq:compact_sup_c_2}
    \abs{P_tf(x) -f(x)} \leq \norm{f}_{\infty} \mathbb{P}\parenthese{E_{1} \leq (1+\norm{\lambda}_{\infty,\tmsk}) t} + \abs{f(\varphi_t(x))-f(x)} \eqsp. 
  \end{equation}
   In addition, using that
 $(t,x) \mapsto \varphi_t(x) \in \mrc^1(\msm)$, there exists
 $C_1 \geq 0$ such that for all $t \in \ccint{0,1}$ and $x \in \msk$, $\dist(\varphi_t(x),x) \leq t C_1$. Then, since $f$ is  continuous, it is uniformly continuous on $ \{ y \in \msm \, : \, \dist(y,\msk) \leq C_1 \}$ and therefore for all $\varepsilon >0$, there exists $\eta >0$ such that for all $x \in \msk$ and $s \in \ccint{0,\eta}$, $\absLigne{f(\varphi_s(x)) - f(x)} \leq \varepsilon$. Combining this result and \eqref{eq:compact_sup_c_2}, we obtain that for any $t \in \ccint{0,\eta}$,
  \begin{equation}
    \norm{P_tf -f}_{\infty,\msk} \leq \norm{f}_{\infty} \mathbb{P}\parenthese{E_{1} \leq (1+\norm{\lambda}_{\infty,\tmsk}) t} + \varepsilon  \eqsp. 
  \end{equation}
  Taking $t, \varepsilon \to 0$ concludes the proof of \ref{propo:c_1_c_sgenerator_a}.

\item 
  Let $f \in \rmc^1_c(\msm)$. By \Cref{lem:compact_sup}-\ref{lem:compact_sup_a} and since $\lambda Qf \in \rmc_c(\msm)$, there exists a compact set $\msk \subset \msm$, such that for all $x \not \in \msk$, $P_tf(x) = 0$, for all $t \in \ocint{0,1}$, $\lambda(x) Qf(x)  = 0$ and $f(x) = 0$. Therefore, for all $t \in \ocint{0,1}$,
  \begin{equation}
    \label{eq:c_1_c_generator_1}
   \norm{t^{-1}(P_tf(x)-f(x)) - \generator f(x)}_{\infty} = \norm{t^{-1}(P_tf(x)-f(x)) - \generator f(x)}_{\infty,\msk}  \eqsp.
  \end{equation}  
  As seen above, since $f \in \rmc^1_c(\msm)$, $(M^{f,\generator f}_t)_{t \geq 0}$, defined by \eqref{eq:def_martin_local_pdmp_generator}, is a $(\mcf_t)_{t \geq 0}$-martingale. Therefore, for all $x \in \msm$,
  \begin{equation*}
   t^{-1}\defEns{ P_tf(x) - f(x)} - \generator f(x) = t^{-1} \expeMarkov{x}{\int_0^t \defEns{\generator f(\bX_s) - \generator f(x)    } \rmd s} \eqsp.
 \end{equation*}
 Then since $f \in \mrc^1_c(\msm)$, $(t,x) \mapsto \varphi_t(x)$ is continuously differentiable, $\lambda$ is locally bounded and
 $\lambda Qf$ is bounded, there exists $C_1 \geq 0$ such that for all
 $t > 0$ and $x \in \msm$, we have
 \begin{equation}
   \label{eq:c_1_c_generator_2}
\abs{   t^{-1}\defEns{ P_tf(x) - f(x)} - \generator f(x) } \leq C_1 \probaMarkov{x}{\bS_1\leq t} + t^{-1} \expeMarkov{x}{\1_{\ooint{t,\plusinfty}}(\bS_1) \int_0^t \defEns{A^1_s +A^2_s + A^3_s    } \rmd s}\eqsp,
 \end{equation}
 where
 \begin{align*}
&   A^1_s =    \ps{(\partial_u)_{u=0} \{\varphi_u\}(\varphi_s(x))}{\nabla f(\varphi_s(x))}-\ps{(\partial_u)_{u=0} \{\varphi_u\}(x)}{\nabla f(x)} \\
&   A^2_s =   \lambda(\varphi_s(x)) Qf(\varphi_s(x)) - \lambda(x) Qf(x) \eqsp, \, \,  A^3_s = -\lambda(\varphi_s(x))f(\varphi_s(x)) - \lambda(x)f(x)\eqsp.
 \end{align*}

Using that $(x,s)\mapsto \varphi_{s}(x)$ is locally
 bounded on $\rset_+ \times \msm$ and $\lambda$ on $\msm$, there
 exists $C_2$ such that for all $t \in \ocint{0,1}$ and $x \in \msk$, 
 \begin{equation}
   \label{eq:c_1_c_generator_3}
   \probaMarkov{x}{\bS_1\leq t}  \leq \int_0^t \rmd s \lambda(\varphi_s(x)) \exp\parenthese{-\int_0^s \rmd u \lambda(\varphi_u(x))} \leq C_2 t \eqsp.
 \end{equation}
 In addition, using that
 $(t,x) \mapsto \varphi_t(x) \in \mrc^1(\msm)$, there exists
 $C_3 \geq 0$ such that for all $t \in \ocint{0,1}$ and $x \in \msk$, $\dist(\varphi_t(x),x) \leq t C_3$. Then, since $\ps{(\partial_u)_{u=0} \varphi_u}{\nabla f}$, $\lambda Qf$ and $\lambda f$ are continuous, they are uniformly continuous on $\{ y \in \msm \, : \, \dist(y,\msk) \leq C_3\}$ and therefore for all $\varepsilon >0$, there exists $\eta >0$ such that for all $x \in \msk$ and $s \in \ccint{0,\eta}$, $\abs{A^i_s} \leq \varepsilon$, $i=1,2,3$. Combining this result and 
 \eqref{eq:c_1_c_generator_3} in \eqref{eq:c_1_c_generator_2}, we get for all $x \in \msk$, $\varepsilon >0$ and $t \in \ccint{0,\eta\wedge 1}$,
 \begin{equation*}
\abs{   t^{-1}\defEns{ P_tf(x) - f(x)} - \generator f(x) } \leq C_1C_2t+ 3\varepsilon \eqsp.
\end{equation*}
Therefore, by \eqref{eq:c_1_c_generator_1} we get for all $\varepsilon >0$,
\begin{equation*}
  \limsup_{t \downarrow 0} \norm{t^{-1}\defEns{ P_tf(x) - f(x)} - \generator f(x)}_{\infty} \leq 3 \varepsilon \eqsp. 
\end{equation*}
Taking $\varepsilon \to 0$ concludes the proof.
\end{enumerate}
\end{proof}

Let us finish by our running example.

\begin{example*}
Consider the BPS process defined in  \Cref{ex:BPS} and suppose that $\msy$ is bounded. It is then easy to verify that \Cref{ass:compact_sup} is verified, and  the strong generator is given for all $f\in \rmc^1_c(\msm)$ by
\begin{multline*}
  {\bar {\mathcal A}}f(x,y)=\langle y,\nabla f(x,y)\rangle +\left(\langle y,\nabla U(x)\rangle\right)_+\left(f(x,R(x,y))-f(x,y)\right)\\
  +\rate\left(\int_\msy f(x,w)\mbox{d}\loiy(w)-f(x,y)\right).
\end{multline*}
\end{example*}


%


\section{Regularity estimates for PDMP semigroups}
\label{sec:stab_c1c}

\tcr{After introducing the strong and extended generators of a  PDMP in the previous section, we continue to set up the intermediary tools that will  eventually lead to a simple criterion for verifying that a probability measure is invariant for a given  PDMP (\Cref{theo:core_smooth_approx} and \Cref{lem:critere-invariant} in the next section). In case of Markov semigroups defined through diffusion processes, regularization properties can be established which usually imply that some class of smooth functions is a core for the strong generator of the semigroup under consideration. In this spirit, the main goal of this section is to provide similar results for PDMPs, \ie~we provide conditions upon which  some class of smooth functions is a core for the strong generator of a PDMP (see also \cite{BLBMZ3} on that topic). However, compared to diffusion semigroups, we cannot expect using regularization properties.} 

To do so, we need the following definition.
\begin{definition}
  \label{def:compact_comp}
We say that a homogeneous
differential flow $\varphi$ on $\msm$ and a homogeneous Markov kernel $Q$ on $\msm$ are
compactly compatible if for all compact set $\msk \subset \msm$ and $T \geq 0$,
there exists a compact set $\tilde \msk \subset \msm$ satisfying: for
all $n \in \nsets$, $(t_i)_{i \in \iint{1,n}} \in \rset_+^n$,
$\sum_{i=1}^{n} t_i \leq T$,
there exists a sequence $(\msk_i)_{i \in \iint{1,n}}$ of
compact sets of $\msm$ such that, setting $\msk_0 = \msk$,
\begin{enumerate}[label=(\roman*)]
  \item \label{def:compact_comp_item_1} for all $i \in \iint{1,n}$, $\msk_i$ only depends on $(t_j)_{j \in \iint{1,i}}$ and $\cup_{i=0}^{n} \msk_i \subset \tilde \msk$;
  \item \label{def:compact_comp_item_2} for all $i \in \iint{0,n-1}$, $s_{i+1} \in \ccint{0,t_{i+1}}$ and $s_{n+1} \in \ccintLigne{0,T-\sum_{j=1}^n t_j}$,
    \begin{equation*}
 \bigcup_{x \in \msk_i} \supp \{ Q(\varphi_{t_{i+1}}(x),\cdot) \} \subset
\msk_{i+1} \eqsp, \qquad \varphi_{s_{i+1}}( \msk_{i}) \subset \tilde \msk  \eqsp, \qquad  \varphi_{s_{n+1}}( \msk_{n}) \subset \tilde \msk
\end{equation*}

\end{enumerate}
\end{definition}

Note that by definition, if $\varphi$ and $Q$ are compactly compatible
and the PDMP semigroup with  characteristics $(\varphi, \lambda,Q)$ is
non explosive, for all $T \geq 0$ and all
compact set $\msk \subset \msm$, there exists a compact set
$\tilde \msk \subset \msm$, such that
$\PP (X_t \in \tilde{\msk}, \text{ for all $t \in \ccint{0,T}$} ) =1$,
where $(X_t)_{t \geq 0}$ is a PDMP process with  characteristics $(\varphi, \lambda,Q)$ and starting from $X_0 \in \msk$.

\begin{assumption}
  \label{ass:stability}
The homogeneous  characteristics $(\varphi,\lambda,Q)$ satisfy
\begin{enumerate}[label = (\roman*)]
\item \label{item_1_lem:stab_c1c} the flow  $\varphi$ and the Markov kernel $Q$ are compactly compatible;
\item \label{item_2_lem:stab_c1c} $\lambda\in\mrc^{1}(\msm)$ and for
  all $f \in \mrc^1(\msm)$, $\lambda Qf \in \mrc^1(\msm)$ and there
  exists a locally bounded function $\Psi :\msm \to \rset_+$ such that for all 
  $x \in \msk$,
  \begin{equation*}
    \label{eq:1}
 \norm{\diff (\lambda Qf)(x)} \leq   \norm{\Psi}_{\infty,\msk} \sup \ensemble{\abs{f(y)} + \norm{\diff f(y)}}{y \in \supp \{Q(x,\cdot)\}};
  \end{equation*}
\item \label{item_3_lem:stab_c1c} $(t,x) \mapsto \varphi_t(x)\in \rmc^1(\rset_+\times \msm)$ and  for all compact $\msk \subset \msm$ and $t \geq 0$,
  \begin{equation*}
    \sup \ensemble{\norm{\diff \varphi_s(x)}}{s \in \ccint{0,t}, x \in \msk} < \plusinfty \eqsp.
  \end{equation*}
\end{enumerate}  
\end{assumption}

\begin{theorem}
  \label{lem:stab_c1c}
    Let $(P_t)_{t\geqslant 0}$ be a non explosive PDMP semigroup on $\msm$ with corresponding characteristics $(\varphi,\lambda,Q)$ satisfying \Cref{ass:stability}. 
    Then for all $f \in \mrc^{i}(\msm)$, $i\in\{0,1\}$, $T \in \rset_+$, $P_Tf \in \mrc^{i}(\msm)$. In addition,  for any $f \in \rmc^1(\msm)$ and  compact set $\msk \subset \msm$, there exists $C \geq  0$  such that for all $t \in \ccint{0,T}$,
  \begin{equation}
    \label{eq:lem_stab_clc}
  \sup_{x \in \msk}\{ \abs{P_t f}(x) + \norm{\diff (P_tf)(x)}\} \leq C \eqsp.  
\end{equation}


\end{theorem}

\begin{proof}
We only show the result for $f \in \rmc^{1}(\msm)$, the proof for $f \in \rmc(\msm)$ is similar and left to the reader. 
For all $x \in \msm$
  denote by $(X_t^x)_{t \geq 0}$ a PDMP starting from $x$ associated
  with the characteristics $(\varphi,\lambda,Q)$ and defined by \Cref{const:1}.  Let
  $(S_n^x)_{n \in \nset}$ be the jump times of $(X_t^x)_{t \geq 0}$
  for all $x \in \msm$ and $(\mcf_t)_{t \geq 0}$ the associated filtration. Let $f\in \mrc^1(\msm) $, $T\geqslant 0$ and a
  compact set $\msk \subset \msm$. For $T =0$, the result is
  straightforward so we consider $T >0$. Let $\tilde \msk$ satisfying for all $n \in \nset$,
$(t_i)_{i \in \iint{1,n+1}} \in \rset_+^n$,
$\sum_{i=1}^{n+1} t_i \leq T$,  \ref{def:compact_comp_item_1}-\ref{def:compact_comp_item_2} in \Cref{def:compact_comp}.

Since for all $x  \in \msk$, 
  $\PP (X_t^x \in \tilde \msk, \text{ for all } t \in \ccint{0,T}) = 1$,
  for all $t \in \ccint{0,T}$ and $x \in \msm$,
  \begin{equation}
    \label{eq:core_proof_1}
    \abs{P_tf(x)} = \abs{\expe{f(X_t)}} \leq \sup_{y \in \tilde \msk} \abs{f}(y) \eqsp.
  \end{equation}
  
Furthermore,  $(P_t)_{t \geq 0}$
  is assumed to be non explosive. Therefore
  $\sup_{n \in \nset} \S^{x}_n = \plusinfty$ and we can consider the
  following decomposition for all $t \in \ccint{0,T}$ and $x \in \msk$
  \begin{equation}
    \label{eq:decomposition_regular_pDMP}
  P_tf(x)  = \sum_{n=0}^{\plusinfty} \expeMarkov{}{\1_{\coint{S^{x}_n,S^{x}_{n+1}}}(t) f(X^{x}_t)} \eqsp.
\end{equation}
We  show that for all $n \in \nset$,
  \begin{equation*}
   F_{n,t} : x \mapsto
  \expeMarkov{}{\1_{\cointLigne{S^{x}_n,S^{x}_{n+1}}}(t)
    f(X^{x}_t)}  \eqsp,
  \end{equation*}
 is continuously
  differentiable and in addition there exists $C \geq 0$ such that for all
  $n \in \nset$, $t \in \ccint{0,T}$
  \begin{equation}
    \label{eq:estimate_gradient_n}
\sup_{x \in \msk}   \norm{ \diff  F_{n,t} (x)} \leq C^n  /n!\eqsp.
   \end{equation}
   Assume for the moment that this result holds for any compact set $\msk \subset \msm$. Then, we have for all $t \in \ccint{0,T}$ and compact $\msk \subset \msm$,
    \begin{equation*}
      \lim_{N \to \plusinfty}   \sum_{n=N}^{\infty} \sup_{x \in \msk}  \norm{ \diff F_{n,t}(x) } = 0 \eqsp. 
    \end{equation*}
    By \eqref{eq:decomposition_regular_pDMP}, it implies that
    $x \mapsto P_T f(x)$ is continuously differentiable on $\msm$. In
    addition, for all compact set $\msk \subset\msm$, there exists $C \geq 0$ such that for all $t \in \ccint{0,T}$, $ \norm{\diff P_tf(x)}_{\msk} \leq C $.
This result and \eqref{eq:core_proof_1} imply \eqref{eq:lem_stab_clc}.
  
    We now turn in showing that for all $n \in \nset$, $F_n$ is
    continuously differentiable and \eqref{eq:estimate_gradient_n}
    holds. We first show this result for $n = 0$. In a second time, we
    make an induction on $n \in \nsets$. 

For all $x \in \msk$ and $t \in \ccint{0,T}$, we have
    \begin{equation*}
      F_{0,t}(x) =  f(\phi_t(x)) \exp \parenthese{- \int_0 ^t \lambda(\phi_s(x)) \rmd s} \eqsp. 
    \end{equation*}
Therefore, for all $x\in \msk$ and $t \in \ccint{0,T}$, we obtain by \Cref{ass:stability}-\ref{item_2_lem:stab_c1c}-\ref{item_3_lem:stab_c1c}
    \begin{multline*}
      \diff F_{0,t}(x) = \defEns{\diff f (\varphi_t(x))} \diff (\varphi_t)(x) \exp \parenthese{- \int_0 ^t \lambda(\varphi_s(x)) \rmd s} \\
+        f(\varphi_t(x))   \parentheseDeux{\int_0 ^t \defEns{\diff  \lambda (\varphi_s(x)) \cdot \diff(\varphi_s)(x) }\rmd s}\exp \parenthese{- \int_0 ^t \lambda(\varphi_s(x)) \rmd s} \eqsp.
\end{multline*}
Since for all $x \in \msk$ and $t \in \ccint{0,T}$, $\varphi_t(x) \in \tilde \msk$, $f \in \mrc^1(\msm)$ and using \Cref{ass:stability}-\ref{item_2_lem:stab_c1c}-\ref{item_3_lem:stab_c1c}, we get there exists $C_0 \geq 0$ such that for all $t \in \ccint{0,T}$,
\begin{equation}
  \label{eq:bound_F_0}
\sup_{x \in \msk} [     \abs{F_0(x)}+    \norm{\diff F_0(x)}  ]\leq C_0 \eqsp.
    \end{equation}

We now show the result for $n \in \nsets$. We first give  an explicit expression of $F_n$ for all $n \in \nsets$. Indeed, we have conditioning successively on $\mcf_{S_{n+1}^x}, \cdots, \mcf_{S_1^x}$, for all $x \in \msk$ and $t \in \ccint{0,T}$
\begin{multline*}
  F_{n,t}(x)
  = \int_0^t \rmd t_1 \exp\parenthese{-\int_0^{t_1} \lambda\defEns{\varphi_{s_1}(x)} \rmd s_1} \int_{\msm } \opK(\varphi_{t_1}(x),\rmd x_1)  \\
\qquad \qquad   \int_0^{t-t_1} \rmd t_2 \exp\parenthese{-\int_0^{t_2} \lambda\defEns{\varphi_{s_2}(x_1)} \rmd s_2}  \int_{\msm } \opK(\varphi_{t_2}(x_1),\rmd x_2) \\
  \cdots   \int_0^{t-\sum_{i=1}^{n-1}t_i} \rmd t_n \exp\parenthese{-\int_0^{t_n} \lambda\defEns{\varphi_{s_n}(x_{n-1})} \rmd s_n} 
  \int_{\msm } \opK(\varphi_{t_n}(x_{n-1}),\rmd x_n) f(\varphi_{t-\sum_{i=1}^n t_i} (x_n))   \\  \exp\parenthese{-\int_0^{t-\sum_{i=1}^n t_i} \lambda\defEns{\varphi_{s_{n+1}}(x_{n})} \rmd s_{n+1}} \eqsp,
\end{multline*}
where $\opK$ is the kernel defined on $(\msm, \mcb{\msm})$  for all $x \in \msm$ and $\msa \in \mcb{\msm}$ by
  \begin{equation*}
    \opK(x,\msa) = \lambda(x) Q(x,\msa)  \eqsp.
  \end{equation*}

 We introduce a sequence of operator
$( Q^{(n)})_{n \in \nset^*}$, defined for all
$g : \rset_+ \times \msm \to \rset$, bounded on all compact of
$\ccint{0,T} \times \msm$ and measurable, $t \in \ccint{0,T}$ and
$x \in \msm$ by
  \begin{multline*}
    Q^{(n)} g (t,x) = \int_0^t \rmd t_1 \exp\parenthese{-\int_0^{t_1} \lambda\defEns{\varphi_{s_1}(x)} \rmd s_1} \int_{\msm } \opK(\varphi_{t_1}(x),\rmd x_1) \\
    \cdots \int_0^{t-\sum_{i=1}^{n-1} t_i} \rmd t_n  \exp\parenthese{-\int_0^{t_n} \lambda\defEns{\varphi_{s_n}(x_{n-1})} \rmd s_n} \int_{\msm } \opK(\varphi_{t_n}(x_{n-1}),\rmd x_n) g(t-\sum_{i=1}^n t_i ,x_n) \eqsp.
  \end{multline*}
  Taking for $g$ the function
  $g_F : (s,y) \mapsto f(\varphi_s(y)) \exp(-\int_0^s \rmd u
  \lambda(\varphi_u(y)))$, we have 
  \begin{equation}
    \label{eq:F_n_g_F_lem_prof_tech}
    F_{n,t} = Q^{(n)} g_F(t,\cdot) \eqsp. 
  \end{equation}
 Since
  $f \in \mrc^1(\msm)$ and by \Cref{ass:stability}-\ref{item_3_lem:stab_c1c}, $g_F$ is
  measurable, for all $s \in \ccint{0,T}$, $y \mapsto g_F(s,y)$ is continuously
  differentiable on $\msm$ and satisfies for all $T' \in \ccint{0,T}$,
  $\msk' \subset \msm$ compact,
  $\sup_{s \in \ccint{0,T'}, y \in \msk'} \{\abs{g_F(s,y)} +
  \norm{\diff_x g_F(s,y)}\} < \plusinfty$. 
Denote by  $\mrgD$ the set of measurable functions $g :\rset_+ \times \msm \to \rset_+$ satisfying 
  for all $s \in \ccint{0,T}$, $y \mapsto g(s,y)$ is continuously differentiable
  on $\msm$ and for all $T' \in \ccint{0,T}$, $\msk' \subset \msm$
  compact,
  $\sup_{s \in \ccint{0,T'}, y \in \msk'} \abs{g(s,y)} +
  \norm{\diff_x g(s,y)} < \plusinfty$. Then, if we show for any $g \in \mrgD$ that (1) for all
 $t \in \ccint{0,T}$ and $x \in \msk$, $y \mapsto Q^{(n)} g(t,y)$ is differentiable at $x$; (2) there exists $C \geq 0$ such that for all $n \in \nsets$,
  $x \in \msk$ and $t \in \ccint{0,T}$,
  \begin{equation}
    \label{eq:objectif_g}
    \abs{Q^{(n)} g(t,x)} + \norm{\diff_x Q^{(n)} g(t,x)} \leq C^n/(n!) \eqsp,
  \end{equation}
\eqref{eq:F_n_g_F_lem_prof_tech} and \eqref{eq:bound_F_0} show that \eqref{eq:estimate_gradient_n} holds and the proof is completed. 

  Note the following relation between $Q^{(n-1)}$ and $Q^{(1)}$ which will be essential to the proof: for all
$g : \rset_+ \times \msm \to \rset$, bounded on all compact of
$\rset_+ \times \msm$ and measurable, $t \in \ccint{0,T}$ and
$x \in \msm$ 
  \begin{multline}
\label{eq:q_n_prop_q_1}
    Q^{(n)} g (t,x) = \int_{0}^t \rmd t_1  \exp\parenthese{-\int_0^{t_1} \lambda\defEns{\varphi_{s_1}(x)} \rmd s_1} \int_{\msm } \opK(\varphi_{t_1}(x),\rmd x_1) Q^{n-1}g(t-t_1,x_1) \\
    = Q^{(1)} [Q^{(n-1)}g](t,x)\eqsp.
  \end{multline}
  
  First, we make an induction on $i \in \iint{1,n}$ to show that
  for all $i \in \iint{1,n}$ and $g \in \mrg$, that
  $(s,x) \mapsto Q^{(i)} g(s,x) \in \mrgD(\ccint{0,T}\times \msm)$, which will show that for
  all $t \in \ccint{0,T}$,  $y\mapsto Q^ng(t,y)$
  is continuously differentiable. For $i =1$, note that for all
  $s \in \ccint{0,T}$ and $y \in \msm$,
  \begin{equation}
  \label{eq:def_q_1}
    Q^{(1)}g(s,y) = \int_0^s \rmd t_1 \exp\parenthese{-\int_0^{t_1} \lambda\defEns{\varphi_{s_1}(y)} \rmd s_1}  \int_{\msm } \opK(\varphi_{t_1}(y),\rmd y_1) g(s-t_1 ,y_1) \eqsp.
  \end{equation}
  Let $T' \in \ccint{0,T}$, $\msk' \subset \msm$ be compact and $\tilde \msk'$ given by \Cref{def:compact_comp} associated with
  $\msk'$ and $T'$. Then for all $s \in \ccint{0,T'}$, $t_1\in\ccint{0,s}$, $\varphi_{t_1}(y) \in \tilde \msk '$ for all $y \in \msk'$. Therefore we have 
 by assumption on $g$, \Cref{ass:stability}-\ref{item_2_lem:stab_c1c}-\ref{item_3_lem:stab_c1c},
  $\opK g(s-t_1, \cdot)(\varphi_{t_1}(\cdot)) : y \mapsto  \int_{\msm } \opK(\varphi_{t_1}(y),\rmd y_1) g(s-t_1 ,y_1)$ for all $s \in \ccint{0,T'}$, $t_1 \in \ccint{0,s}$, is differentiable and there exists $C \geq 0$ such that for all $s \in \ccint{0,T'}$, $t_1 \in \ccint{0,s}$
  \begin{equation*}
    \sup_{y \in \msk'} \defEns{ \abs{\opK g(s-t_1 ,\cdot)(\varphi_{t_1}(y))} + \norm{\diff_x  \opK g(s-t_1, \cdot)(\varphi_{t_1}(y))}} < C \eqsp
  \end{equation*}
  By \eqref{eq:def_q_1}, we get then the result for $i=1$. The result for $i \in \iint{2,n}$ is then a straightforward consequence of \eqref{eq:q_n_prop_q_1} and the case $i=1$.

  We now show that for all $g \in \mrgD$ that  there exists $C \geq 0$ such that for all
  $x \in \msk$ and $t \in \ccint{0,T}$, \eqref{eq:objectif_g}
  holds. 
  By an induction on $N \in \iint{1,n}$, we show that for all $t \in \ccint{0,T}$,
  $(t_i)_{i \in \iint{1,n-N}} \in \rset_+^{n-N}$, $\sum_{j=1}^{n-N} t_j \leq t$, there exists $(\msk_{i})_{i \in \iint{0,n-N}}$ satisfying
  \ref{def:compact_comp_item_1}-\ref{def:compact_comp_item_2} in \Cref{def:compact_comp} with respect to $\msk$, $T$, $\tmsk$, $(t_i)_{i \in \iint{1,n-N}}$  and the following bound holds
  \begin{multline}
    \label{eq:stab_cont_rec}
    \sup_{x_{n-N} \in \msk_{n-N}} \abs{Q^N g\parenthese{t-\sum_{j=1}^{n-N}t_j,x_{n-N}} } + \norm{\diff Q^N g\parenthese{t-\sum_{j=1}^{n-N}t_j,x_{n-N}} } \\
    \leq C^{N}_1 \defEns{\sup_{s \in \ccint{0,T}, y \in \tmsk} \abs{g(s,y)} +\sup_{s \in \ccint{0,T}, y \in \tmsk} \norm{\diff_x g(s,y)} }   \left. \parenthese{t-\sum_{j=1}^{n-N}t_j}^N \middle /(N!) \right .\eqsp,
  \end{multline}
  where 
  \begin{equation}
    \label{eq:def_C_1C_2}
    C_1 = \norm{\lambda}_{\infty,\tmsk} + C_2 \eqsp, \qquad C_2 = \sup_{s \in \ccint{0,T}} \norm{\diff_x \varphi_s(x)}_{\infty, \tmsk} (T \norm{\diff \lambda}_{\infty,\tmsk} + \norm{\Psi}_{\infty,\tmsk}) \eqsp.
  \end{equation}
Then, the result for $N=n$ will conclude the proof.
  
  For $N=1$, let $t \in \ccint{0,T}$,
  $(t_i)_{i \in \iint{1,n-1}} \in \rset_+^{n-1}$, $\sum_{j=1}^{n-1} t_j \leq t$. Note that for all $y \in \msm$, setting $u_{n-1} = \sum_{i=1}^{n-1} t_i$,
    \begin{equation*}
    Q^{(1)}g\parenthese{t-u_{n-1},y} = \int_0^{t - u_{n-1}} \rmd t_{n} \rme^{-\int_0^{t_n} \lambda\defEns{\varphi_{s_n}(y)} \rmd s_n}  \int_{\msm } \opK(\varphi_{t_n}(y),\rmd y_n) g(t-u_{n-1}-t_n ,y_n) \eqsp.
  \end{equation*}
For all $t_n \in \rset_+$, such that
  $\sum_{i=1}^n t_i < t$, by \Cref{ass:stability}-\ref{item_1_lem:stab_c1c}, there exists $(\msk_i)_{i \in \iint{0,n}}$ satisfying \ref{def:compact_comp_item_1}-\ref{def:compact_comp_item_2} in \Cref{def:compact_comp}. In particular, $(\msk_i)_{i \in \iint{0,n-1}}$ only depends on  $(t_i)_{i \in \iint{1,n-1}}$.  Then, using \Cref{ass:stability}, we get for all 
  $x_{n-1} \in \msk_{n-1}$,
\begin{align*}
&  \abs{ Q^{(1)}g\parenthese{t-u_{n-1},x_{n-1}}} \\
&  \leq \norm{\lambda}_{\infty,\tmsk}  \int_0^{t - u_{n-1}} \rmd t_{n}    \sup\defEns{\abs{g}(t-u_{n-1}-t_n ,y_n)\, : \, y_n \in \supp\{ Q(\varphi_{t_n}(x_{n-1}),\rmd y_n)\}} \\
                                          & \qquad  \leq  \norm{\lambda}_{\infty,\tmsk}  \int_0^{t - u_{n-1}} \rmd t_{n}    \sup\defEns{\abs{g}(t-u_{n-1}-t_n ,y_n)\, : \, y_n \in \msk_n} \eqsp,
\end{align*}
and
\begin{align*}
&   \norm{\nabla_x  Q^{(1)}g\parenthese{t-u_{n-1},x_{n-1}}} \\
 &\leq C_2 \int_0^{t - u_{n-1}} \hspace{-0.5cm}\rmd t_{n}    \sup\defEns{\abs{g}(t-u_{n-1}-t_n ,y_n) \, : \, y_n \in \supp\{ Q(\varphi_{t_n}(x_{n-1}),\rmd y_n)\}}\\
& +  C_2 \int_0^{t - u_{n-1}} \hspace{-0.5cm} \rmd t_{n}    \sup\defEns{\norm{\nabla_x g (t-u_{n-1}-t_n ,y_n)} \, : \, y_n \in \supp\{ Q(\varphi_{t_n}(x_{n-1}),\rmd y_n)\}}\\
&\leq C_2 \int_0^{t - u_{n-1}}\hspace{-0.5cm} \rmd t_{n}    \sup\defEns{\abs{g}(t-u_{n-1}-t_n ,y_n^1)+ \norm{\nabla_x g (t-u_{n-1}-t_n ,y_n^2)} \, : \, y_n^1,y_n^2 \in \msk_n} 
  \eqsp,
\end{align*}
where $C_2$ is given by \eqref{eq:def_C_1C_2}. Combining these two results and using that $\msk_n \subset \tmsk$ for all $t_n$, $\sum_{i=1}^n t_i < t$ give~\eqref{eq:stab_cont_rec} for $N=1$.

Now assume that the result holds for $N \in \iint{1,n-1}$ and let $(t_i)_{i \in \iint{1,n-N-1}} \in \rset_+^{n-N-1}$. By induction hypothesis, for all $t_{n-N} \in \rset_+$, such that
  $\sum_{i=1}^{n-N} t_i < t$, there exists  $(\msk_{i})_{i \in \iint{0,n-N}}$ satisfying
  \ref{def:compact_comp_item_1}-\ref{def:compact_comp_item_2} in \Cref{def:compact_comp} with respect to $\msk$, $T$, $\tmsk$, $(t_i)_{i \in \iint{1,n-N}}$.  Then, using \Cref{ass:stability} and \eqref{eq:q_n_prop_q_1}, we get for all 
  $x_{n-N-1} \in \msk_{n-N-1}$, setting $u_{n-N-1} =\sum_{i=1}^{n-N-1} t_i $ and $\msa_{n-N} = \supp\{ Q(\varphi_{t_{n-N}}(x_{n-N-1}),\rmd y_{n-N})\}$
\begin{align*}
&  \abs{ Q^{(N)}g\parenthese{t-u_{n-N-1},x_{n-N-1}}} \\
& \leq \norm{\lambda}_{\infty,\tmsk}  \int_0^{t - u_{n-N-1}} \hspace{-0.5cm} \rmd t_{n-N}    \sup\defEns{\abs{Q^{(N-1)} g}(t-u_{n-N-1}-t_{n-N} ,y_{n-N})\, : \, y_{n-N} \in \msa_{n-N}} \\
                                          &   \leq  \norm{\lambda}_{\infty,\tmsk}  \int_0^{t - u_{n-N-1}}  \hspace{-0.5cm} \rmd t_{n-N}    \sup\defEns{\abs{Q^{(N-1)}g}(t-u_{n-N-1}-t_{n-N} ,y_{n-N})\, : \, y_{n-N} \in \msk_{n-N}} \eqsp,
\end{align*}
and
\begin{align*}
&   \norm{\nabla_x  Q^{(N)}g\parenthese{t-u_{n-N-1},x_{n-N-1}}} \\
   &\leq C_2 \int_0^{t - u_{n-N-1}} \hspace{-0.5cm} \rmd t_{n-N}    \sup\defEns{\abs{Q^{(N-1)} g}(t-u_{n-N-1}-t_{n-N} ,y_{n-N}) \, : \, y_{n-N} \in \msa_{n-N}}\\
& +  C_2 \int_0^{t - u_{n-N-1}} \hspace{-0.5cm} \rmd t_{n-N}    \sup\defEns{\norm{\nabla_x Q^{(N-1)} g (t-u_{n-N-1}-t_{n-N} ,y_{n-N})} \, : \, y_{n-N} \in \msa_{n-N}}\\
   &\leq C_2 \int_0^{t - u_{n-N-1}} \rmd t_{n-N}    \sup\defEns{\abs{Q^{(N-1)}g}(t-u_{n-N-1}-t_{n-N} ,y_{n-N}) \, : \, y_{n-N} \in \msk_{n-N}}\\
& +  C_2 \int_0^{t - u_{n-N-1}} \hspace{-0.5cm} \rmd t_{n-N}    \sup\defEns{\norm{\nabla_x Q^{(N-1)} g (t-u_{n-N-1}-t_{n-N} ,y_{n-N})} \, : \, y_{n-N} \in \msk_{n-N}}
  \eqsp,
\end{align*}
where $C_2$ is given by \eqref{eq:def_C_1C_2}. Combining these two results using  $\msk_{n-N} \subset \tmsk$ for all $t_{n-N}$, $\sum_{i=1}^{n-N} t_i < t$ and the induction hypothesis conclude the proof of \eqref{eq:stab_cont_rec}.

\end{proof}

\begin{remark}
\label{rem:generalization_stab_clc}
  \Cref{lem:stab_c1c} can be generalized  under the condition that
for some $k \in \nsets$,  the  characteristics $(\varphi,\lambda,Q)$ satisfy
\begin{enumerate}[label = (\roman*)]
\item \label{item_1_lem:stab_c1c_b} the flow  $\varphi$ and the Markov kernel $Q$ are compactly compatible;
\item \label{item_2_lem:stab_c1c_b} $\lambda\in\mrc^{k}(\msm)$ and for all $f \in \mrc^k(\msm)$, $\lambda Qf \in \mrc^k(\msm)$ and there exists a locally bounded function $\Psi :\msm \to \rset_+$ such that for all 
 $x \in \msk$, $i \in\iint{1,k}$,
  \begin{equation*}
    \label{eq:1}
  \norm{\diffD^i (\lambda Qf)(x)} \leq    \norm{\Psi}_{\infty,\msk} \sup \ensemble{ \norm{\diffD^i f(y)}}{y \in \supp \{Q(x,\cdot)\} \eqsp,i \in\iint{1,k} };
  \end{equation*}
\item \label{item_3_lem:stab_c1c_b} we have $(t,x) \mapsto \varphi_t(x)$ is $k$-times continuously differentiable on $\rset_+ \times \msm$  and for all compact $\msk \subset \msm$ and $t \geq 0$,
  \begin{equation*}
    \sup \ensemble{\norm{\diffD^i \varphi_s(x)}}{s \in \ccint{0,t}, x \in \msk\eqsp, i \in \iint{1,k}} < \plusinfty \eqsp.
  \end{equation*}
\end{enumerate}
Then for all function $f \in \mrc^k(\msm)$, and $T \in \rset_+$,
$P_Tf \in \mrc^k(\msm)$. In addition, for all compact set
$\msk \subset\msm$, and $T \in \rset_+$, there exists $C \geq 0$ such
that for all $t \in \ccint{0,T}$,
$\sup_{x \in \msk} \absLigne{P_t f}(x) + \sup_{x \in \msk, i
  \in\iint{1,k}} \norm{\diffD^i P_t f(x)} \leq C$.
\end{remark}

We say that a Markov semigroup $(P_t)_{t \geq 0}$ is Feller if $\mrc_0(\msm) \subset \mrb_0(\msm)$, where $\mrb_0(\msm)$ is defined by \eqref{eq:def_mrb_0}, and for any $f \in \rmc_0(\msm)$, and $t \geq 0$, $P_t f \in \rmc_0(\msm)$, where $\rmc_0(\msm)$ is equipped with the uniform norm $f \mapsto \norm{f}_{\infty}$
We refer to \Cref{sec:gener-contr-semigr} for the definition of a core of a closed operator.
\begin{corollary}
\label{coro:core_generator}
    Let $(P_t)_{t\geqslant 0}$ be a non explosive PDMP semigroup on $\msm$ with corresponding characteristics $(\varphi,\lambda,Q)$ satisfying
 \Cref{ass:compact_sup}-\Cref{ass:stability}. Then, $(P_t)_{t \geq 0}$ is Feller and $\rmc^1_c(\msm)$ is a core for the strong generator of $(P_t)_{t \geq 0}$ seen as a semigroup on $\rmc_0(\msm)$.
\end{corollary}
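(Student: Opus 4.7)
The plan is to combine the two main technical results proved so far in this section. The Feller property is obtained essentially by reading off Proposition \ref{propo:c_1_c_sgenerator}(a) and the bound of Assumption \ref{ass:compact_sup}, while the core property follows from the standard criterion \cite[Proposition 1.3.3]{ethier:kurtz:1986}: a subspace $\msd\subset\domain(\sgenerator)$ which is dense in $\rmc_0(\msm)$ and invariant under $(P_t)_{t\geq 0}$ is a core for $\sgenerator$.

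\textbf{Feller property.} Strong continuity of $(P_t)_{t\geq0}$ on $\rmc_0(\msm)$ is exactly the content of Proposition \ref{propo:c_1_c_sgenerator}(a), whose hypothesis $(t,x)\mapsto\varphi_t(x)\in\rmc^1(\rset_+\times\msm)$ is included in Assumption \ref{ass:stability}. To show $P_t(\rmc_0(\msm))\subset\rmc_0(\msm)$, let $f\in\rmc_0(\msm)$. Since $\rmc_0(\msm)\subset\rmc(\msm)$, Lemma \ref{lem:stab_c1c} (with $i=0$) gives $P_tf\in\rmc(\msm)$. For the vanishing at infinity, Assumption \ref{ass:compact_sup} provides $M=M(t)$ such that $\supp P_t(x,\cdot)\subset\cball{x}{M}$ for all $x\in\msm$, so
\[
|P_tf(x)|\ \leqslant\ \sup_{y\in\cball{x}{M}}|f(y)|\ \xrightarrow[x\to\infty]{}\ 0,
\]
since $\msm$ is a closed submanifold of $\rset^N$ (Heine--Borel) and $f$ vanishes at infinity.

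\textbf{Core property.} The three items to check are inclusion, density, and invariance. The inclusion $\rmc^1_c(\msm)\subset\domain(\sgenerator)$ is Proposition \ref{propo:c_1_c_sgenerator}(b), whose hypotheses on $\varphi$, $\lambda$ and $\lambda Q f$ for $f\in\rmc_c(\msm)$ are contained in Assumption \ref{ass:stability}. The density of $\rmc^1_c(\msm)$ in $\rmc_0(\msm)$ for the uniform norm is a standard fact on Riemannian manifolds (pull back smooth compactly supported cutoffs from the ambient $\rset^N$, then mollify). Invariance is the decisive point: for $f\in\rmc^1_c(\msm)$, Lemma \ref{lem:compact_sup}(a) furnishes a bounded set $\msa\subset\msm$ outside of which $P_tf$ vanishes, so $\supp P_tf$ is bounded; as $\msm$ is closed in $\rset^N$, this support is compact. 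Together with Lemma \ref{lem:stab_c1c}, which provides $P_tf\in\rmc^1(\msm)$, this yields $P_tf\in\rmc^1_c(\msm)$. Invoking the criterion above concludes the proof.

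The only mild subtlety is the density of $\rmc^1_c(\msm)$ in $\rmc_0(\msm)$, but since $\msm$ is a smooth submanifold of Euclidean space this is routine; apart from that, every ingredient has been established in the preceding sections and the argument is a direct assembly.
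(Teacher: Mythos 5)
Your proof is correct and takes essentially the same approach as the paper's, which is much terser (it simply cites Proposition~\ref{propo:c_1_c_sgenerator}\ref{propo:c_1_c_sgenerator_a}, Lemma~\ref{lem:stab_c1c}, and the Ethier--Kurtz core criterion without spelling out the three verifications). You correctly fill in the decisive steps that the paper leaves implicit: the domain inclusion via Proposition~\ref{propo:c_1_c_sgenerator}\ref{propo:c_1_c_sgenerator_b}, the invariance of $\rmc^1_c(\msm)$ under $(P_t)_{t\geq 0}$ via Lemma~\ref{lem:compact_sup}\ref{lem:compact_sup_a} together with Lemma~\ref{lem:stab_c1c}, and the vanishing-at-infinity argument directly from \Cref{ass:compact_sup}. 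One small imprecision: you state that the hypotheses of Proposition~\ref{propo:c_1_c_sgenerator}\ref{propo:c_1_c_sgenerator_b} are ``contained in Assumption~\ref{ass:stability}''; in fact \Cref{ass:stability}\ref{item_2_lem:stab_c1c} only gives $\lambda Qf\in\rmc^1(\msm)$, not that it is compactly supported, and one must additionally invoke \Cref{ass:compact_sup} (or compact compatibility) to obtain $\lambda Qf\in\rmc_c(\msm)$ for $f\in\rmc^1_c(\msm)$ — a point the paper itself glosses over. This does not affect the correctness of your overall argument.
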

\begin{proof}
  By \Cref{propo:c_1_c_sgenerator}-\ref{propo:c_1_c_sgenerator_a} and \Cref{lem:stab_c1c}, $(P_t)_{t \geq 0}$ is Feller. The second statement is a consequence of  \Cref{lem:stab_c1c} and \cite[Proposition 3.3, Chapter 1]{ethier:kurtz:1986}.
\end{proof}

\begin{example*}
For the BPS process, $\lambda$ is not in $\rmc^1(\msm)$ so that we cannot apply the previous theory. One aim of the following section will then be to introduce a framework where we may overpass this limitation.
\end{example*}

\section{Core and invariant distributions  of PDMPs}
\label{eq:invariant_main}

\tcr{Building upon the two previous sections,} the main purpose of this section is to provide a practical conditions on characteristics $(\varphi,\lambda,Q)$ such that if a probability measure $\mu$ on $(\msm,\mcb{\msm})$ satisfies for all $f \in \mrc^1_c(\msm)$, $\int_{\msm} \generator f(x) \rmd \mu(x) = 0$, where $\generator$ is the extended generator of the semigroup $(P_t)_{t \geq 0}$ associated with $(\varphi,\lambda,Q)$, then $\mu$ is invariant for $(P_t)_{t \geq 0}$. Since the extended generator of $(P_t)_{t \geq 0}$ is an extension of the strong generator $\sgenerator$, it is enough to show that $\mrc^1_c(\msm)$ is a core for $\sgenerator$ by \cite[Proposition 9.2]{ethier:kurtz:1986}. To do so, we introduce the following condition on $(P_t)_{t \geq 0}$. 

\begin{definition}
  We say that the PDMP semi-group $(P_t)_{t \geq 0}$ with   characteristics $( \varphi,\lambda,Q)$ is smoothly and compactly approximable if
  for all $\varepsilon>0$ there exist  characteristics  $( \varphi,\lambda^\varepsilon,Q^\varepsilon)$ satisfying \Cref{ass:compact_sup}-\Cref{ass:stability} and 
\begin{equation}
\label{eq:def_sm_comp_approx}
  \sup_{\substack{x \in \msm\\ \msa \in \mcbb(\msm)}} \defEns{\lambda^\varepsilon (x) \wedge \lambda (x) \abs{  Q^\varepsilon(x,\msa)  -  Q (x,\msa) } + \abs{\lambda^\varepsilon (x) - \lambda (x)} } \leq \veps \eqsp.
\end{equation}
\end{definition}

We may now give the main result of this section.

\begin{theorem}
  \label{theo:core_smooth_approx}
    Let $(P_t)_{t \geq 0}$ be a non explosive PDMP semigroup with
  characteristics $(\varphi,\lambda,Q)$ which is smoothly and
  compactly approximable. Then $(P_t)_{t \geq 0}$ is a Feller semigroup and  $\mrc^1_c(\msm)$ is a core for the
  strong generator $\sgenerator$ of $(P_t)_{t \geq 0}$ seen as an operator on $\rmc_0(\msm)$.
\end{theorem}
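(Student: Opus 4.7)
The plan is to transfer the regularity of the approximating semigroups $(P^\varepsilon_t)_{t\geq 0}$ associated with the characteristics $(\varphi,\lambda^\varepsilon,Q^\varepsilon)$ — for which \Cref{coro:core_generator} already gives the Feller property and the core property for $\rmc^1_c(\msm)$ — to the limiting semigroup $(P_t)_{t\geq 0}$ via a Trotter--Kato type argument quantified by the synchronous coupling of \Cref{prop:synchrone}. Precisely, taking the constant function $g\equiv \varepsilon$ in \eqref{eq:prop:synchrone_1} (a legitimate choice by the definition \eqref{eq:def_sm_comp_approx}) yields
\begin{equation*}
  \tvnormEq{\updelta_x P_t - \updelta_x P_t^\varepsilon} \leq 2\bigl(1-\rme^{-\varepsilon t}\bigr) \eqsp, \qquad x \in \msm,\ t \geq 0,
\end{equation*}
so $\normLigne{P_t f - P_t^\varepsilon f}_\infty \leq 2\bigl(1-\rme^{-\varepsilon t}\bigr)\normLigne{f}_\infty$ for every $f \in \mrb(\msm)$, uniformly in $t$ on any bounded interval. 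Since each $(P^\varepsilon_t)$ sends $\rmc_0(\msm)$ into itself and $\rmc_0(\msm)$ is closed for the uniform norm, $P_tf\in \rmc_0(\msm)$ whenever $f\in \rmc_0(\msm)$; the triangle inequality $\normLigne{P_t f - f}_\infty \leq \normLigne{P_t f - P_t^\varepsilon f}_\infty + \normLigne{P_t^\varepsilon f - f}_\infty$, combined with the strong continuity of each $(P^\varepsilon_t)$, then gives $\lim_{t\to 0}\normLigne{P_t f - f}_\infty = 0$, establishing that $(P_t)$ is Feller.

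I would next prove that $\rmc^1_c(\msm) \subset \rmD(\sgenerator)$ and identify $\sgenerator f = \Ddir{\varphi} f + \lambda (Qf - f)$ on this subspace. By \Cref{propo:c_1_c_sgenerator} applied to the regularized characteristics, every $f \in \rmc^1_c(\msm)$ lies in $\rmD(\sgenerator^\varepsilon)$ with $\sgenerator^\varepsilon f = \Ddir{\varphi} f + \lambda^\varepsilon (Q^\varepsilon f - f)$. The definition \eqref{eq:def_sm_comp_approx} together with the inequality \eqref{eq:rem_syn_3} of \Cref{rem:sync} yields the pointwise bound $\normLigne{\sgenerator^\varepsilon f - \bigl(\Ddir{\varphi} f + \lambda(Qf-f)\bigr)}_\infty \leq 3\varepsilon\normLigne{f}_\infty$, hence uniform convergence of $\sgenerator^\varepsilon f$ to a limit $g \in \rmc_0(\msm)$. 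Passing to the limit in the Dynkin identity $P^\varepsilon_t f - f = \int_0^t P^\varepsilon_s \sgenerator^\varepsilon f \rmd s$ — justified by the uniform convergences $P^\varepsilon_s\to P_s$ and $\sgenerator^\varepsilon f \to g$ — gives $P_t f - f = \int_0^t P_s g \rmd s$, and dividing by $t$ and letting $t\to 0$ (using the strong continuity of $(P_t)$ just established and the continuity of $s\mapsto P_sg$) shows $f \in \rmD(\sgenerator)$ with $\sgenerator f = g$.

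For the core property I would rely on a resolvent argument. For $\mu>0$, denote by $R_\mu = (\mu - \sgenerator)^{-1}$ and $R^\varepsilon_\mu = (\mu - \sgenerator^\varepsilon)^{-1}$ the resolvents, given by $R_\mu h = \int_0^\plusinfty \rme^{-\mu t} P_t h \rmd t$ and analogously for $R_\mu^\varepsilon$. Dominated convergence combined with the total variation estimate of the first paragraph gives $R_\mu^\varepsilon h \to R_\mu h$ uniformly on $\msm$ for every $h\in \rmc_0(\msm)$. Given $f \in \rmD(\sgenerator)$, set $h := (\mu-\sgenerator)f$ and $g_\varepsilon := R_\mu^\varepsilon h \in \rmD(\sgenerator^\varepsilon)$; then $g_\varepsilon \to R_\mu h = f$ and $\sgenerator^\varepsilon g_\varepsilon = \mu g_\varepsilon - h \to \sgenerator f$, both uniformly. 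Since $\rmc^1_c(\msm)$ is a core for $\sgenerator^\varepsilon$ by \Cref{coro:core_generator}, pick $f_\varepsilon\in \rmc^1_c(\msm)$ with $\normLigne{f_\varepsilon - g_\varepsilon}_\infty + \normLigne{\sgenerator^\varepsilon f_\varepsilon - \sgenerator^\varepsilon g_\varepsilon}_\infty \leq \varepsilon$; using once more $\normLigne{\sgenerator^\varepsilon f_\varepsilon - \sgenerator f_\varepsilon}_\infty \leq 3\varepsilon \normLigne{f_\varepsilon}_\infty$ together with the boundedness of $(\normLigne{f_\varepsilon}_\infty)_\varepsilon$, one concludes $f_\varepsilon \to f$ and $\sgenerator f_\varepsilon \to \sgenerator f$ uniformly, so that $\rmc^1_c(\msm)$ is dense in $\rmD(\sgenerator)$ for the graph norm, i.e.\ a core. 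The main obstacle is the absence of any direct regularity of $\lambda$ and $Q$, which prevents applying \Cref{propo:c_1_c_sgenerator} to the limiting semigroup; the quantitative character of \eqref{eq:def_sm_comp_approx}, rather than a mere qualitative convergence of the kernels, is precisely what allows to transfer both the inclusion $\rmc^1_c(\msm) \subset \rmD(\sgenerator)$ and the core property from the approximants to the limit.
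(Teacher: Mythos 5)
Your proof is correct, and it reaches the same conclusion by a route that is genuinely different in its second half. For the Feller property you argue exactly as the paper does (uniform approximation of $P_t f$ by $P_t^\varepsilon f$ via \Cref{prop:synchrone} plus closedness of $\rmc_0(\msm)$). For the core property, however, the paper delegates everything to the abstract \Cref{propo:core_approximation} in \Cref{sec:gener-contr-semigr}, whose proof stays inside $\rmc^1_c(\msm)$ throughout by replacing the resolvent with the discrete Riemann sums $f^n_p=p^{-1}\sum_{k=1}^{p^2}\rme^{-\lambda k/p}T^n_{k/p}f$; these lie in $\rmc^1_c(\msm)$ because each $P^{1/n}_t$ preserves it (\Cref{lem:stab_c1c}). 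You instead work with the genuine resolvents $R_\mu^\varepsilon h=\int_0^{\infty}\rme^{-\mu t}P^\varepsilon_t h\,\rmd t$, observe that $R_\mu^\varepsilon h\to R_\mu h$ uniformly by dominated convergence and the total-variation bound, and then descend from $g_\varepsilon=R_\mu^\varepsilon h\in\rmD(\sgenerator^\varepsilon)$ to $\rmc^1_c(\msm)$ by using the core property of $\sgenerator^\varepsilon$ established in \Cref{coro:core_generator}. The two arguments trade off: the paper's Riemann-sum trick avoids invoking the core property of the approximants, while your route avoids the somewhat delicate double-limit $p\to\infty$, $n\to\infty$ and is closer to a textbook Trotter--Kato argument. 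You also make explicit the inclusion $\rmc^1_c(\msm)\subset\rmD(\sgenerator)$ by passing to the limit in the Dynkin identity $P^\varepsilon_t f-f=\int_0^t P^\varepsilon_s\sgenerator^\varepsilon f\,\rmd s$; this inclusion is a hypothesis of \Cref{propo:core_approximation} that the paper invokes without spelling out the verification, so your added paragraph is a welcome clarification rather than redundant detail. One small point common to both arguments: \eqref{eq:def_sm_comp_approx} controls the supremum of the sum while \eqref{eq:prop:synchrone_1} asks for the sum of suprema, so one should take $g\equiv 2\varepsilon$ rather than $g\equiv\varepsilon$; this is a harmless factor that neither affects your proof nor the paper's.
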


\begin{proof}
Suppose for the moment that $(P_t)_{t \geq 0}$ is Feller. 
For
  any $\veps >0$, consider a PDMP semi-group
  $(P^{\veps}_s)_{s \geq 0}$ with characteristics
  $(\varphi,\lambda^{\veps}, Q^{\veps})$ satisfying
  \Cref{ass:compact_sup}-\Cref{ass:stability} and
  \eqref{eq:def_sm_comp_approx}. By \Cref{lem:stab_c1c} and \Cref{coro:core_generator},  for
  any $n\in \nsets$, $(P^{1/n}_s)_{s \geq 0}$ is Feller and for any $t \geq 0$, $P_t^{1/n}\rmc^1_c(\msm) \subset \rmc^1_c(\msm)$. In addition, 
  \Cref{prop:synchrone} implies that  for all $x \in \msm$, $n \in \nsets$ and $t \geq 0$, $    \tvnorm{\updelta_x P_t  - \updelta_x P^{1/n}_t  } \leq 2(1-\rme^{-t/n}) \leq 2t/n $,   and therefore, for any $f \in \rmc_0(\msm)$, 
  \begin{equation}
    \label{eq:proof_sc_approx_feller_0}
\normLigne{P_tf - P_t^{1/n}f}_{\infty} \leq 2t \norm{f}_{\infty} /n \eqsp.
  \end{equation}
 Then, all the conditions of \Cref{propo:core_approximation} in \Cref{sec:gener-contr-semigr} are satisfied and we can conclude that $\rmc_c^1(\msm)$ is a core for $(P_t)_{t \geq 0}$. 

 We now prove  that  $(P_t)_{t \geq 0}$ is a Feller semigroup.  We first show that for any $f \in \rmc_0(\msm)$ and $t \geq 0$, $P_t f \in \rmc_0(\msm)$. Indeed,  \eqref{eq:proof_sc_approx_feller_0} gives that for any $t \geq 0$ and $f \in \rmc_0(\msm)$, $P_tf$ is the uniform limit of $( P_t^{1/n}f)_{n \in \nsets}$, therefore since $\mrc_0(\msm)$ is closed for the uniform topology, $P_tf \in \rmc_0(\msm)$. Finally, by \eqref{eq:proof_sc_approx_feller_0} applied with $n =1$, we have for any $f \in \rmc_0(\msm)$ and $t >0$, $\normLigne{P_tf -f }_{\infty} \leq 2 t + \normLigne{P_t^{1}f -f }_{\infty}$. Since  $(P^1_t)_{t \geq 0}$ is Feller, we get $\lim_{t \downarrow 0} \normLigne{P_tf -f }_{\infty}=0$, which implies that $\rmc_0(\msm) \subset \mrb_0(\msm)$ and completes the proof.


\end{proof}
\begin{corollary}\label{lem:critere-invariant}
  Let $(P_t)_{t \geq 0}$ be a non explosive PDMP semigroup with
  characteristics $(\varphi,\lambda,Q)$ and $\mu$ be  a measure on
  $(\msm,\mcbb(\msm))$. Assume
  $\int_{\msm} \abs{\lambda} (x) \mu(\rmd x) < \plusinfty$ and that
  $(P_t)_{t \geq 0}$ is smoothly and compactly approximable. In
  addition, suppose that the generator $\generator$ associated with
  $(P_t)_{t \geq 0}$ satisfies for all $f \in \mrc^1_c(\msm)$,
  \begin{equation}
    \label{eq:critere_inv}
    \int_{\msm} \generator f(x) \mu (\rmd x) = 0 \eqsp.
  \end{equation}
  Then $\mu$ is invariant for $(P_t)_{t \geq 0}$. 
\end{corollary}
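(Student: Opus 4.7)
The plan is to derive the corollary as a direct consequence of Theorem~\ref{theo:core_smooth_approx}, combined with the classical invariance criterion for strongly continuous semigroups (e.g.~\cite[Proposition~9.2]{ethier:kurtz:1986}): a probability measure $\mu$ is invariant for $(P_t)_{t\geq 0}$ if and only if $\int_\msm \sgenerator f \rmd\mu = 0$ for every $f$ in a core of the strong generator $\sgenerator$.

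First I would invoke Theorem~\ref{theo:core_smooth_approx} to conclude that $(P_t)_{t\geq 0}$ is Feller on $\rmc_0(\msm)$ and that $\mrc^1_c(\msm)$ is a core for $\sgenerator$. Since the extended generator $(\generator,\domain(\generator))$ is an extension of $(\sgenerator,\domain(\sgenerator))$ (as recalled in Section~\ref{sec:generator} via the martingale property of $M^{f,\sgenerator f}$), one has $\mrc^1_c(\msm)\subset\domain(\sgenerator)$ and $\sgenerator f = \generator f$ is given by~\eqref{eq:def_generateur} for every $f\in\mrc^1_c(\msm)$. Hypothesis~\eqref{eq:critere_inv} therefore reads exactly $\int \sgenerator f\rmd\mu=0$ on this core.

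Before invoking the criterion, one must check $\sgenerator f\in\mathrm{L}^1(\mu)$ for $f\in\mrc^1_c(\msm)$, since the density argument underlying the core property only transfers the pointwise identity $\int\sgenerator f\rmd\mu=0$ to all of $\domain(\sgenerator)$ provided both sides make sense. This is where the assumption $\int\lambda\rmd\mu<\plusinfty$ intervenes: combining the explicit expression~\eqref{eq:def_generateur}, the compactness of $\supp f$, and the local boundedness of the drift $b(x)=(\partial_t)_{t=0}\varphi_t(x)$, there exists $C_f$ such that
\[|\sgenerator f(x)| \leq C_f\1_{\supp f}(x) + 2\|f\|_\infty\lambda(x),\]
which is $\mu$-integrable. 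Together with \cite[Proposition~9.2]{ethier:kurtz:1986} this yields $\mu P_t = \mu$.

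The substance of the corollary has thus been absorbed entirely into Theorem~\ref{theo:core_smooth_approx}; the argument is essentially bookkeeping, and the only subtle ingredient is the integrability of $\sgenerator f$ against $\mu$, which is precisely why the a priori artificial-looking condition $\int\lambda\rmd\mu<\plusinfty$ must appear in the statement.
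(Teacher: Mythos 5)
Your proof is correct and follows what is plainly the intended argument: the paper itself gives no separate proof of this corollary, and Section~\ref{sec:generator} together with the introduction makes explicit that the point of Theorem~\ref{theo:core_smooth_approx} is to feed into \cite[Proposition~9.2]{ethier:kurtz:1986} with $\mrc^1_c(\msm)$ as the core.

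One small remark on your integrability discussion, which is correct in spirit but could be sharpened. Once you know from Theorem~\ref{theo:core_smooth_approx} that $\mrc^1_c(\msm)\subset\rmD(\sgenerator)$ with $\sgenerator$ viewed as the generator on $\rmc_0(\msm)$, the function $\sgenerator f$ lies in $\rmc_0(\msm)$ and is therefore automatically bounded, hence $\mu$-integrable for any finite $\mu$; so the hypothesis $\int\lambda\,\rmd\mu<\infty$ is not what is needed to \emph{apply} the Ethier--Kurtz criterion. Its role is upstream: condition~\eqref{eq:critere_inv} is phrased with the extended generator $\generator$ via its explicit formula~\eqref{eq:def_generateur}, and $\int\lambda\,\rmd\mu<\infty$ is what guarantees that the expression $\int_\msm\generator f\,\rmd\mu$ is well defined as stated (your pointwise bound $|\generator f(x)|\leq C_f\1_{\supp f}(x)+2\norm{f}_\infty\lambda(x)$ is exactly the right estimate for this). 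Once this is observed, the identification $\generator f=\sgenerator f$ on $\rmc^1_c(\msm)$ turns the hypothesis into $\int\sgenerator f\,\rmd\mu=0$ on a core and the criterion applies. So your proof is complete; the only refinement is attributing the integrability hypothesis to the well-posedness of~\eqref{eq:critere_inv} rather than to the applicability of the core criterion itself.
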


With this notion of smoothly approximable semigroup, we will be able to consider the BPS process.

\begin{example*}

\begin{proposition}\label{prop:approx-BPS}
  Let $U\in\rmc^2(\rset^d)$, $\rate>0$ and $\loiy\in\mcp(\msy)$. Assume that $\msy$ is bounded. Then the associated BPS on $\rset^d \times \msy$, given by \Cref{ex:BPS}, is smoothly and compactly  approximable.
\end{proposition}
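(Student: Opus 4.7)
The plan is to construct, for each $\varepsilon > 0$, smoothed characteristics $(\varphi, \lambda^\varepsilon, Q^\varepsilon)$ of the BPS verifying \Cref{ass:compact_sup}, \Cref{ass:stability} and the bound \eqref{eq:def_sm_comp_approx}. First, by the superposition principle (\Cref{prop:superposition}), the BPS has characteristics $(\varphi, \lambda, Q)$ with $\lambda(x,y) = \ps{\nabla U(x)}{y}_+ + \rate$ and
\begin{equation*}
\lambda Q((x,y),\cdot) = \ps{\nabla U(x)}{y}_+ \updelta_{(x,\Refl(x,y))} + \rate (\updelta_x \otimes \loiy),
\end{equation*}
whose obstacles to smoothness are the positive part $(\cdot)_+$ in the rate and the singularity of $\Refl$ near the critical set $\{\nabla U = 0\}$. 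I would introduce two $\rmc^\infty$ cutoffs to resolve both simultaneously: $\chi_\varepsilon : \rset \to \rset_+$ nondecreasing with $\chi_\varepsilon(a) = 0$ for $a \leq 0$, $\chi_\varepsilon(a) = a$ for $a \geq \varepsilon$, and $\sup_a \abs{\chi_\varepsilon(a) - a_+} \leq \varepsilon$; and $\psi_\varepsilon : \rset_+ \to \ccintLigne{0,1}$ vanishing to all orders on $\ccintLigne{0,\varepsilon/2}$ and identically one on $\ccintLigne{\varepsilon, \plusinfty}$. Then set
\begin{equation*}
 \lambda_1^\varepsilon(x,y) = \chi_\varepsilon(\ps{\nabla U(x)}{y}) \psi_\varepsilon(\norm{\nabla U(x)}), \quad \lambda^\varepsilon = \lambda_1^\varepsilon + \rate,
\end{equation*}
and define $Q^\varepsilon$ through $\lambda^\varepsilon Q^\varepsilon((x,y),\cdot) = \lambda_1^\varepsilon(x,y) \updelta_{(x,\Refl(x,y))} + \rate(\updelta_x \otimes \loiy)$. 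Wherever $\lambda_1^\varepsilon(x,y) \neq 0$ one has $\norm{\nabla U(x)} > \varepsilon/2$, so $\Refl$ is well-defined and $\rmc^1$ where it actually contributes.

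Second, I would verify the two structural assumptions. \Cref{ass:compact_sup} follows from the compactness of $\msy$: with $C_\msy = \sup_{v \in \msy} \norm{v}$, the process starting from $(x,y) \in \msm$ cannot leave $\cball{(x,y)}{T C_\msy + \diameter(\msy)}$ before time $T$, uniformly in $(x,y)$. For \Cref{ass:stability}, the regularity of $\varphi_t(x,y) = (x+ty,y)$ is immediate, and the compact compatibility of $\varphi$ and $Q^\varepsilon$ follows from the fact that $Q^\varepsilon((x,y),\cdot)$ only moves the velocity within $\msy$, so iterating flow and kernel keeps all positions inside a fixed compact of the form $\cball{0}{R} \times \msy$. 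The substantial point is the $\rmc^1$ regularity of $\lambda^\varepsilon$ and of $\lambda^\varepsilon Q^\varepsilon f$ for $f \in \rmc^1(\msm)$. The cutoff $\psi_\varepsilon(\norm{\nabla U(\cdot)})$ is identically zero on the open neighborhood $\{\norm{\nabla U} < \varepsilon/2\}$ of the critical set (hence $\rmc^\infty$ there), and on its complement $\norm{\nabla U}$ is $\rmc^1$ because $U \in \rmc^2$ and $\norm{\cdot}$ is smooth away from $0$; flatness of $\psi_\varepsilon$ at its activation boundary kills all first derivatives there and thereby absorbs the blow-up of $\diff \Refl$ coming from the factor $1/\norm{\nabla U}^2$. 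The chain rule then produces the locally bounded $\Psi^\varepsilon$ required by condition (ii), since $\supp Q^\varepsilon((x,y),\cdot) \subset \{x\} \times \msy$.

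Third, the closeness bound \eqref{eq:def_sm_comp_approx} reduces to the bounce mechanism alone, because the refreshment contributions cancel in both $\lambda - \lambda^\varepsilon$ and $\lambda Q - \lambda^\varepsilon Q^\varepsilon$. A case-split based on whether $\norm{\nabla U(x)}$ exceeds $\varepsilon$ yields: in the first region, $\psi_\varepsilon = 1$ and the reflection kernel is unchanged, so the rate gap is $\abs{\chi_\varepsilon(\ps{\nabla U}{y}) - \ps{\nabla U}{y}_+} \leq \varepsilon$; in the second, both $\lambda_1$ and $\lambda_1^\varepsilon$ are $O(\varepsilon)$ from the crude bound $\ps{\nabla U}{y}_+ \leq \norm{\nabla U} C_\msy$. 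Combining via the elementary inequality $(A \wedge B) \abs{Q_1(\msa) - Q_2(\msa)} \leq \abs{AQ_1(\msa) - BQ_2(\msa)} + \abs{A-B}$, valid for probability kernels and nonnegative scalars, gives a bound of the form $C\varepsilon$ in \eqref{eq:def_sm_comp_approx} with $C$ depending only on $C_\msy$; a final reparametrisation $\varepsilon \leftarrow \varepsilon/C$ concludes. The main obstacle in the proof is preserving $\rmc^1$ regularity of $\lambda^\varepsilon Q^\varepsilon f$ at the boundary $\{\norm{\nabla U} = \varepsilon/2\}$, precisely because $\diff \Refl$ is large there; this is what forces the flat (not merely smooth) choice of $\psi_\varepsilon$, and why a single cutoff on the rate alone would not suffice.
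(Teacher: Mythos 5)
Your construction is a valid variant of the paper's argument in the case where $\msy$ is bounded, but there is a genuine gap: you use $C_\msy = \sup_{v\in\msy}\norm{v}$ and $\diameter(\msy)$ to verify \Cref{ass:compact_sup}, which tacitly assumes $\msy$ bounded, whereas \Cref{ex:BPS} allows $\msy$ to be any rotation-invariant closed sub-manifold of $\rset^d$, in particular $\msy = \rset^d$. In that case a refreshment jump can send the velocity anywhere in the support of $\loiy$, so $\supp\{P^\varepsilon_t((x,y),\cdot)\}$ is in general not contained in any ball around $(x,y)$ and \Cref{ass:compact_sup} fails for your $(\varphi,\lambda^\varepsilon,Q^\varepsilon)$. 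The paper handles this by also truncating the refreshment law: it replaces $\loiy$ by $\loiy^{R^\varepsilon}(\cdot) = \loiy(\ball{0}{R^\varepsilon}\cap\cdot)/\loiy(\ball{x_0}{R^\varepsilon})$ for $R^\varepsilon$ large enough that $\tvnorm{\loiy - \loiy^{R^\varepsilon}}$ is small, then incorporates the resulting extra $O(\varepsilon)$ contribution into the bound \eqref{eq:def_sm_comp_approx}. Without this step your proof does not cover the full hypothesis of the proposition.

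On the bounded case your two-cutoff design is correct but over-engineered, and the closing claim that ``a single cutoff on the rate alone would not suffice'' is false. The paper uses the single modified rate
\[
\lambda_1^\varepsilon(x,y) = \frac{\bigl(\ps{y}{\nabla U(x)}-\varepsilon\bigr)_+^2}{\varepsilon + \bigl(\ps{y}{\nabla U(x)}-\varepsilon\bigr)_+}\eqsp,
\]
which vanishes whenever $\ps{y}{\nabla U(x)}\leq\varepsilon$. Since $\msy$ bounded gives $\ps{y}{\nabla U(x)} \leq M_\msy\norm{\nabla U(x)}$, the support of $\lambda_1^\varepsilon$ is automatically contained in $\{\norm{\nabla U} \geq \varepsilon/M_\msy\}$, which keeps $\diff\Refl$ bounded on compacts without any cutoff on $\norm{\nabla U}$. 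Your $\chi_\varepsilon$ needs the auxiliary $\psi_\varepsilon$ only because you chose it to turn on at $0$ rather than at $\varepsilon$; a rate cutoff activating at $\varepsilon$ renders $\psi_\varepsilon$ superfluous. Relatedly, the insistence on $\psi_\varepsilon$ being flat at $\varepsilon/2$ is unnecessary: since $\norm{\nabla U} \geq \varepsilon/2$ on $\supp\lambda_1^\varepsilon$, $\diff\Refl$ does not actually blow up there, and $\rmc^1$-vanishing of $\psi_\varepsilon$ at the activation boundary (matching value and first derivative with the zero region) already yields the $\rmc^1$ regularity that \Cref{ass:stability}-\ref{item_2_lem:stab_c1c} requires.
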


\begin{proof}
  Let $(\varphi,\lambda,Q)$ be the  characteristics of the BPS given by \Cref{ex:superposition:bps}.
  Let $\veps > 0$, $\lambda^{\veps}: \msm \to \rset_+$ and $Q^{\veps}$ be  a Markov kernel on $(\rset^d\times \msy,\mcbb(\rset^d\times \msy))$ defined for all $(x,y)\in \rset^d \times \msy$ and $\msa \in \mcbb(\rset^d)$ by
  \begin{equation}
    \label{eq:def_lambda_eps_bps}
    \lambda^{\veps}(x,y) = \lambda_1^{\veps}(x,y) + \rate \eqsp, \, \lambda_1^{\veps}(x,y) = (\ps{y}{\nabla U(x)}-\veps)^2_+ / (\veps + (\ps{y}{\nabla U(x)}-\veps)_+) \eqsp,
  \end{equation}
  \begin{equation*}
    Q^{\veps}((x,y),\msa) =    (1/\lambda^{\veps}(x,y))\defEns{\lambda_1^{\veps}(x,y) \updelta_{(x,\Refl(x,y))}(\msa) + \rate (\updelta_x \otimes \loiy)(\msa)} \eqsp,
  \end{equation*}
  where $\Refl$ is defined by \eqref{eq:def_refl} for $g = \nabla U$. Then, similarly to the BPS process, $(\varphi,\lambda^{\veps},Q^{\veps})$ defines a non explosive semi-group $(P_t^{\veps})_{t\geq 0}$ on $(\rset^d \times \msy) \times \mcbb (\rset^d \times \msy)$. In addition we have
     \begin{equation}
     \label{eq:bound_distance_lambda_lambda_eps}
    \sup_{(x,y) \in \rset^d \times \msy  } \abs{ \lambda^{(\varepsilon)}(x,y) -
    \lambda(x,y) } \leq 2 \varepsilon  \eqsp.
   \end{equation}
   Therefore, using \Cref{rem:sync}, we get
   \begin{multline*}
\sup_{\substack{(x,y) \in \rset^d \times \msy\\ \msa \in \mcbb(\rset^d \times \msy)}} \defEns{\lambda^\varepsilon (x,y) \wedge \lambda (x,y) \abs{  Q^\varepsilon((x,y),\msa)  -  Q ((x,y),\msa) } + \abs{\lambda^\varepsilon (x,y) - \lambda (x,y)} } \\ \leq 2 \sup_{\substack{(x,y) \in \rset^d \times \msy\\ \msa \in \mcbb(\rset^d \times \msy)}} \abs{\lambda^\varepsilon (x,y)   Q^\varepsilon((x,y),\msa)  -  \lambda (x,y)  Q ((x,y),\msa) } \leq 4 \veps   \eqsp,
\end{multline*}
which shows \eqref{eq:def_sm_comp_approx}.

Since $\msy$ is assumed to be  bounded, $\msy \subset\cball{0}{M_{\msy}}$, with $M_\msy \in \rset_+$. Therefore by definition,  for all $t \in \rset_+$ and $(x,y) \in \rset^d \times \msy$, we have $P^{\veps}_t((x,y),\cball{x}{t M_\msy} \times \msy) = 1$ and $P^{\veps}_t$ satisfies \Cref{ass:compact_sup}.

Finally, we show that \Cref{ass:stability} is satisfied. \Cref{ass:stability}-\ref{item_3_lem:stab_c1c_b} trivially holds by definition of $\varphi$. 

For all closed ball, $\cball{0}{M} \subset \rset^d$, $M \in \rset_+$, $Q^{\veps}((x,y),\cball{0}{M} \times \msy) = 1$ for all $x \in \cball{0}{M}$. For all compact set $\msk \subset \ball{0}{M} \times \msy \subset \rset^d \times \msy$, $M \geq 0$,  and $T \in \rset_+$, define $\tilde{K} = \ball{0}{M+TR} \times \msy$, with  $R = \sup_{y \in \msy} \norm{y}$. Then, for all $n \in \nsets$, $(t_i)_{i \in \iint{1,n}} \in \rset_+^n$,
$\sum_{i=1}^{n} t_i \leq T$, conditions \ref{def:compact_comp_item_1}-\ref{def:compact_comp_item_2} of \Cref{def:compact_comp} are satisfied with  $\tilde{K}$, and
$(\msk_i)_{i \in \iint{1,n}}$ given by
\begin{equation*}
  \msk_i =  \cball{0}{R_i } \times \msy \eqsp, \, R_i =M+R \sum_{j=0}^i t_j \eqsp.
\end{equation*}
 Thus, $\varphi$ and $Q^{\veps}$ are compactly compatible.

    Then note that for all $\veps>0$,  $\lambda^{\veps}$ is
  continuously differentiable on $\msm$ since
  $t \mapsto (t-\veps)_+^2/(\veps+(t-\veps)_+)$ is on $\rset$; and its gradient is given for all $x \in \msm$ by
  \begin{equation*}
    \nabla(\lambda^{(\varepsilon)})(x,y) =
    \begin{cases}
      \frac{(\ps{y}{\nabla U(x)}-\varepsilon)(\ps{y}{\nabla U(x)}+\varepsilon)}{\ps{y}{\nabla U(x)}^2} \begin{pmatrix}  \nabla^2 U(x) y , \nabla U(x)
      \end{pmatrix} & \text{ if $\ps{y}{\nabla U(x)} \geq \varepsilon$}\\
      0 & \text{ otherwise} \eqsp.
    \end{cases}
  \end{equation*}
  In addition, for all continuously differentiable function $f: \rset^d \times \msy \to \rset$,  $(x,y) \in \rset^d \times \msy$, we have
  $    \lambda^{\veps}(x,y)  Q^{\veps}f(x,y) = A_1(x,y) + A_2(x,y)$  where
  \begin{equation*}
    A_1(x,y) = \lambda_1^{\veps}(x,y) f(x,\Refl(x,y)) \eqsp, \, A_2(x,y) =  \rate \int_{\msy} f(x,\tilde y) \loiy(\rmd \tilde y) \eqsp.
  \end{equation*}
  We show that $A_1$ and $A_2$ are continuously differentiable and satisfy for $i=1,2$, for all compact set $\msk \in \rset^d \times \msy$, for all $(x,y) \in \msk$,
\begin{equation}
    \label{eq:bound_bps_A_1_2} 
  \norm{\diff A_i(x,y)} \leq \sup_{(w,z) \in \msk} \{\Psi_i(w,z)\}\sup\ensemble{\abs{f}(x,\tilde{y}) + \norm{\diff f(x,\tilde{y})}}{\ty \in \msy}   \eqsp,
\end{equation}
  where $\Psi_i : \rset^d \times \msy \to \rset_+$, $i=1,2$, are
  bounded on compact sets of $\rset^d \times \msy$. Note that if we
  show \eqref{eq:bound_bps_A_1_2}, since for all
  $(x,y) \in \rset^d \times \msy$,
  $\supp \{Q((x,y),\cdot)\} = \{x\} \times \msy$, this result concludes the proof that \Cref{ass:stability}-\ref{item_2_lem:stab_c1c_b} holds.
  
  First, for all $(x,y) \in \rset^d \times \msy$, $\Refl$ is
  continuously differentiable at $(x,y)$ is
  $\ps{y}{\nabla U(x)} \not =0$. Since $f$, $\lambda_1^{\veps}$ are
  continuously differentiable and $\lambda_1^{\veps}(x,y) = 0$ if
  $\ps{y}{\nabla U(x)} \leq \veps$, $A_1$ is continuously
  differentiable and satisfies for all $(x,y) \in \rset^d \times y$,
  $\ps{y}{\nabla U(x)} \geq \veps$
  \begin{equation}
    \label{eq:bound_bps_A_1}
\norm{\diff A_1(x,y)} \leq  \norm{\diff \lambda_1^{\veps}(x,y)} \abs{f}(x,\Refl(x,y)) + \lambda_1^{\veps}(x,y) (1+\norm{\diff \Refl(x,y)})\norm{\diff f(x,\Refl(x,y))} \eqsp. 
\end{equation}
Regarding $A_2$, we have for all $(x,y) \in \rset^d \times \msy$, since $\diff f$ is bounded on all compact sets of $\rset^d \times \msy$ and the Lebesgue dominated convergence theorem,
\begin{equation*}
\norm{\diff A_2(x,y)} \leq \rate \int_\msy \norm{\diff_x f(x,\tilde y)} \loiy(\tilde y) \leq \rate \sup_{y \in \msy}\norm{\diff f(x,\tilde y)}\eqsp,
\end{equation*}
where $\diff_x$ is the differential operator with respect to the $x$-variable. Combining this result and \eqref{eq:bound_bps_A_1}, we get that \eqref{eq:bound_bps_A_1_2} holds and therefore \Cref{ass:stability}-\ref{item_2_lem:stab_c1c_b} as well.

\end{proof}

\begin{corollary}\label{prop:invarince-BPS}
Consider the BPS characteristics $(\varphi,\lambda_1,Q_1,\lambda_2,Q_2)$  defined in \Cref{ex:BPS}, and let $(P_t)_{t\geqslant 0}$ be the
  corresponding  semigroup. Assume
  that $\loiy$ is rotation invariant,~\ie~for all
  $O \in \rset^{d\times d}$, $O^{\transpose}O = \Id$,
  $\loiy(O \msa) = \loiy(\msa)$, for all $\msa \in\mcbb(\msy)$. In addition, suppose that
  \begin{equation}
    \label{eq:prop:invarince-BPS}
    \int_{\rset^d} ( 1+\norm{\na U(x)} ) \rme^{-U(x)}\rmd x \ < \ \infty \eqsp,\quad \int_\msy \norm{y} \loiy(\rmd y)\ < \ \infty \eqsp. 
  \end{equation}
Then $\tpi = \pi \otimes \loiy$ is invariant for $(P_t)_{t\geqslant 0}$, where $\pi$ is the probability measure on $(\rset^d,\mcbb(\rset^d))$ with density with respect to the Lebesgue measure proportional to $x \mapsto \rme^{-U(x)}$.  
\end{corollary}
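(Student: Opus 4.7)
The plan is to invoke Corollary \ref{lem:critere-invariant}: the BPS is non-explosive by \Cref{prop:non-explosion_BPS}, and smoothly and compactly approximable by \Cref{prop:approx-BPS}, so it suffices to verify the integrability condition $\int \lambda \, \rmd\tpi <\infty$ and the weak-stationarity identity $\int \generator f \, \rmd \tpi = 0$ for every $f\in \rmc^1_c(\msm)$, where $\generator$ is the generator displayed at the end of \Cref{sec:stab_c1c}. The integrability is immediate from \eqref{eq:prop:invarince-BPS}: by Fubini and the bound $\ps{y}{\nabla U(x)}_+ \leq \norm{y}\norm{\nabla U(x)}$,
\[
\int_\msm \lambda(x,y)\,\rmd\tpi(x,y) \leq \rate + \left(\int_{\rset^d}\norm{\nabla U(x)}\rme^{-U(x)}\rmd x\right)\left(\int_\msy \norm{y}\loiy(\rmd y)\right) /Z < \infty,
\]
where $Z$ is the normalizing constant of $\pi$.

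For the stationarity identity, I would treat the three pieces of $\generator f$ separately. For the transport part, for each fixed $y\in\msy$, integration by parts in $x$ against the density of $\pi$ (valid because $f$ has compact support, so there are no boundary terms) gives
\[
\int_{\rset^d}\ps{y}{\nabla_x f(x,y)}\rme^{-U(x)}\rmd x \;=\; \int_{\rset^d} f(x,y)\ps{y}{\nabla U(x)}\rme^{-U(x)}\rmd x.
\]
The refreshment part contributes zero trivially after integrating in $y$ against $\loiy$. It remains to handle the bounce term: this is the place where the rotation invariance of $\loiy$ is used. For each $x$ with $\nabla U(x) \neq 0$, the map $y\mapsto \Refl(x,y)$ is an orthogonal reflection of $\rset^d$, so $\Refl(x,\cdot)$ pushes $\loiy$ forward to itself by the rotation-invariance hypothesis, and moreover $\ps{\Refl(x,y)}{\nabla U(x)} = -\ps{y}{\nabla U(x)}$. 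Changing variables $y\mapsto \Refl(x,y)$ in the $f(x,\Refl(x,y))$ piece therefore yields
\[
\int_\msy \ps{y}{\nabla U(x)}_+ f(x,\Refl(x,y))\loiy(\rmd y) \;=\; \int_\msy \ps{y}{\nabla U(x)}_- f(x,y)\loiy(\rmd y),
\]
and consequently
\[
\int_\msy \ps{y}{\nabla U(x)}_+\bigl(f(x,\Refl(x,y))-f(x,y)\bigr)\loiy(\rmd y) \;=\; -\int_\msy \ps{y}{\nabla U(x)} f(x,y)\loiy(\rmd y).
\]
(On the null set $\{\nabla U=0\}$ the bounce term vanishes identically.) Combined with the transport computation above and Fubini, this shows that the bounce term and the transport term cancel when integrated against $\tpi$, and the refreshment term is zero, so $\int \generator f\,\rmd \tpi = 0$.

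The only nontrivial input is the cancellation between the transport and bounce terms, which hinges on two facts used in tandem: the rotation-invariance of $\loiy$ (ensuring that $\Refl(x,\cdot)$ preserves $\loiy$) and the algebraic identity $t_+ - t_- = t$ applied to $t=\ps{y}{\nabla U(x)}$. The integrability assumptions \eqref{eq:prop:invarince-BPS} are used precisely to make Fubini and the integration by parts rigorous in the $L^1(\tpi)$ sense; they ensure the three pieces of $\generator f$ are individually $\tpi$-integrable, which is necessary to split the integral. With all these ingredients in place, \Cref{lem:critere-invariant} concludes the proof.
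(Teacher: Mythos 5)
Your proof is correct and proceeds exactly along the lines the authors clearly intend (the paper itself leaves this corollary unproved, after having set up all the machinery). You correctly assemble the hypotheses of \Cref{lem:critere-invariant}: non-explosion from \Cref{prop:non-explosion_BPS}, smooth and compact approximability from \Cref{prop:approx-BPS}, the $\tpi$-integrability of $\lambda$ via Cauchy--Schwarz and \eqref{eq:prop:invarince-BPS}, and then the weak stationarity identity $\int \generator f\,\rmd\tpi = 0$ via integration by parts in $x$ (for the transport term), the pushforward invariance of $\loiy$ under the Householder reflection $\Refl(x,\cdot)$ together with the identity $\ps{\Refl(x,y)}{\nabla U(x)} = -\ps{y}{\nabla U(x)}$ and $t_+ - t_- = t$ (for the bounce term), and the trivial cancellation for the refreshment term. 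The key observation that the paper's ``rotation invariance'' ($O^\transpose O = \Id$, so all of $O(d)$ and not merely $SO(d)$) covers reflections is correctly used. One small gloss: you call $\{\nabla U = 0\}$ a ``null set,'' which need not hold and is not needed; the bounce term vanishes there simply because $\ps{y}{\nabla U(x)}_+ = 0$ when $\nabla U(x) = 0$, so the split into the case $\nabla U(x)\neq 0$ is harmless regardless of the measure of that set.
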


\begin{proof}
The case where $\msy$ is bounded is a direct consequence of \Cref{theo:core_smooth_approx}. 
  
  In the case where $\msy$ is not bounded, consider the conditional distribution associated with $\loiy$ defined for all $\msa \in \mcb{\msy}$, 
\begin{equation*}
  \loiy^{R}(\msa) = \loiy(\ball{0}{R}\cap \msa) / \loiy(\ball{x_0}{R}) \eqsp,
\end{equation*}
for  $R$ large enough such that $\loiy(\ball{0}{R}) \not = 0$.
Then, for any $\msa \in \mcb{\msy}$, we get $ \absLigne{  \loiy(\msa)  - \loiy^R(\msa) } \leq 2 \loiy(\msy \setminus \ball{0}{R})$, which implies that for
any $f : \rset^{d} \times \msy \to \rset$, bounded and measurable, $t \geq 0$, we
have for all $R \geq 0$ large enough,
\begin{align}
  \nonumber
  & \abs{\int_{\rset^{d} \times \msy} P_t f(x,y) \rmd \tilde{\pi}(x,y) - \int_{\rset^{d} \times \msy} f(x,y) \rmd \tilde{\pi}(x,y)} \\
  \nonumber
& \, \leq 2 \norm{f}_\infty  \loiy\parenthese{ \msy \setminus \ball{0}{R}}
 +   \abs{\int_{\rset^{d} \times \msy} P_t f(x,y) \rmd \tilde{\pi}^{R}(x,y) - \int_{\rset^{d} \times \msy} P_t^{R} f(x,y) \rmd \tilde{\pi}^{R}(x,y) } \eqsp.
\end{align}
Since $\lim_{R \to \plusinfty} \loiy\parentheseLigne{\msy  \setminus \ball{0}{R}} = 0$, it
remains to show that the last term in the right-hand side goes to $0$
as $R \to \plusinfty$ and the proof will be finished. Besides, note
that this result holds if for $t >0$, we  show that for all
$(x,y) \in \rset^{d} \times \msy$,
\begin{equation}
  \label{eq:invariance_non_bornee_2}
\lim_{R \to \plusinfty}   \tvnorm{ \updelta_{(x,y)} P_t  -  \updelta_{(x,y)} P_t^R } = 0 \eqsp.
\end{equation}
But by \Cref{prop:synchrone} and definition of  characteristics of $(P_s)_{s \geq 0}$ and $(P_s^R)_{s \geq 0}$, we get for all $t \geq 0$, $(x,y) \in \rset^d \times \msy$, 
\begin{equation*}
  \tvnorm{ \updelta_{(x,y)} P_t  -  \updelta_{(x,y)} P_t^R }  \leq 2 \{1-\exp(-2\rate \mu[\msy \setminus \ball{0}{R}])\} \eqsp,
\end{equation*}
which shows that \eqref{eq:invariance_non_bornee_2} holds. 
\end{proof}

\end{example*}

\section{Stability of invariant measure and jump rate}
\label{sec:invariant_2}
We conclude this work with an asymptotic counterpart of the comparison theorems established in  \Cref{sec:synchrone}. 
For a measurable function $V:\msm\rightarrow [1,+\infty)$ and $\nu_1$, $\nu_2\in\mcp(\msm)$, $\nu_1(V), \nu_2(V) < \plusinfty$, define the $V$-norm between $\nu_1$ and $\nu_2$ by
\begin{equation*}
\norm{\nu_1 - \nu_2}_V  =   \sup\left\{ \abs{\int_{\msm}  f \rmd \nu_1 - \int_{\msm}  f \rmd \nu_2 },\  \norm{f/V}_\infty \leq 1\right\}\eqsp.
\end{equation*}
We say that a semi-group on $\msm$ with invariant probability measure $\mu$ is $V$-uniformly geometrically ergodic with constants $C$, $\rho>0$ if for all $\nu\in \mcp(\msm)$, $\nu(V)< \plusinfty$ and $t\geqslant 0$,
\begin{equation*}
\norm{\mu  - \nu P_t}_V \leq C \rme^{-\rho t} \nu(V)\eqsp.
\end{equation*}

%
%

\begin{theorem}\label{prop:invariant_2}
Let $(P_t^1)_{t\geqslant 0}$  and $(P_t^2)_{t\geqslant 0}$ be two non-explosive homogeneous PDMP semi-group with characteristics $(\varphi,\lambda_1,Q_1)$ and $(\varphi,\lambda_2,Q_2)$ respectively. Let $\mu_1,\mu_2 \in \mcp(\msm)$ be invariant for $(P_t^1)_{t \geq 0}$ and $(P_t^2)_{t \geq 0}$ respectively. Suppose that $(P_t^1)_{t \geq 0}$ is $V$-uniformly geometrically ergodic with constants $C,\rho \in \rset_+^*$ for a function $V:\msm\rightarrow [1,+\infty)$ such that $\mu_2(V)<+\infty$.
Assume in addition that 
\begin{equation*}
 \rmc^1_c(\msm) \subset  \mrb_0^1(\msm) = \ensemble{f \in \mrb(\msm)}{\lim_{t \to 0}\norm{P_t^1 f-f}_{\infty} = 0} \eqsp,
\end{equation*}
 and that for all $t \in \rset_+$ and $f \in \rmc^1_c(\msm)$, $x \mapsto \int_0^t P_s^1 f(x) \rmd s \in \mrD(\sgenerator_2)$, where $(\sgenerator_2, \mrD(\sgenerator_2))$ is the strong generator of $(P^2_t)_{t \geq 0}$. 
Then
\begin{equation*}
  \norm{\mu_1  - \mu_2}_V \leq
C\rho^{-1}\sup\ensemble{\int_{\msm}    \abs{\lambda_1  Q_1h  - \lambda_2  Q_2h + (\lambda_2   - \lambda_1 )h }    \rmd \mu_2}{\norm{h/V}_\infty \leq 1}  \eqsp.
\end{equation*}
\end{theorem}

\begin{proof}
  By density, it is sufficient to bound $\absLigne{\mu_1(f) -\mu_2(f)}$ for all $f\in \mrc^1_c(\msm)$ with $\norm{f/V}_{\infty} \leq 1$. Let $f\in \mrc^1_c(\msm)$ with $\norm{f/V}_{\infty} \leq 1$ and, for $t\geqslant 0$, let $g_t = \int_0^t P_s^1 (f-\mu_1( f)) \rmd s$. 
 According to \cite[Proposition 14.10]{davis:1993}, for all $t \geq 0$, $g_t \in \rmD(\sgenerator_1)$ as a sum of a constant function and of $\int_0^t P_s f \rmd s$ with $f\in \mrb_0^1(\msm)$, where $(\sgenerator_1, \mrD(\sgenerator_1))$ is the strong generator of $(P^1_t)_{t \geq 0}$; moreover it holds
  \begin{equation*}
    \sgenerator_1 g_t =  \sgenerator_1 \int_0^t P_s f \rmd s  = P_t f -f \eqsp. 
  \end{equation*}
Using that   $(P^1_t)_{t \geq 0}$ is $V$-uniformly geometrically ergodic and that $g_t \in \rmD(\sgenerator_2)$, we obtain for all $t \geq 0$ and $x\in\msm$ that
\begin{align*}
|\mu_1( f)-f(x)-\sgenerator_2 g_t(x)| & \leq  \norm{\updelta_x P_t - \mu_1 }_V + |P_t f(x) - f(x)-\sgenerator_2 g_t(x)| \\
&\leq  C \rme^{-\rho t} V(x) + |\sgenerator_1 g_t(x) - \sgenerator_2 g_t(x)|\eqsp.
\end{align*}
  In addition, by definition of $(\sgenerator_2,\rmD(\sgenerator_2))$
  and since $\mu_2$ is invariant for $(P_t^2)_{t \geq 0}$, then
  $\mu_2( \sgenerator_2 g_t) = 0$ for all $t\geqslant0$, so that
  \begin{equation*}
    \abs{\mu_1(f)-\mu_2(f)}  = \abs{\mu_2 \{ \mu_1(f) - f - \sgenerator_2 g_t\}} \leq \mu_2\po \abs{(\sgenerator_1-\sgenerator_2) g_t}\pf +  C \rme^{-\rho t} \mu_2(V)\eqsp.
  \end{equation*}
 Since $\rmD(\sgenerator_i) \subset \rmD(\generator_i)$, $i=1,2$, for all $t \geq 0$, $(\sgenerator_1-\sgenerator_2) g_t = (\generator_1-\generator_2) g_t$. Finally,   $(P^1_t)_{t \geq 0}$ being $V$-uniformly geometrically ergodic, then for all
  $x \in \msm$, $g_t(x) \leq (C/\rho) V(x)$.
  The proof is then concluded taking $t \to \plusinfty$.
\end{proof}

\begin{example*}

Let us apply this result in the case of the Bouncy Particle Sampler.  Ergodicity of the BPS is studied in \cite{DurmusGuillinMonmarche:bouncy}, to which we will refer for details on this matter in the following. For the sake of simplicity, we will work under restrictive conditions.

\begin{proposition}\label{prop:bps-invariant2}
Consider the BPS  with characteristics $(\varphi,\lambda_1,Q_1,\lambda_2,Q_2)$  defined in \Cref{ex:BPS}, with $U \in \mrc^2(\msm)$ and $\msy \subset \cball{0}{1}$, and $(P_t)_{t\geqslant 0}$ the corresponding semigroup. For $M>0$, let $(P_t^M)_{t\geqslant0}$ be the PDMP semi-group with characteristics $(\varphi,\lambda_1\wedge M,Q_1,\lambda_2,Q_2)$.
  
   Assume that $\loiy$ is rotation invariant and \eqref{eq:prop:invarince-BPS} holds. In addition, assume that there exist $R>0$, $W\in\rmc^2(\rset^d)$ and $F\in\rmc^2(\rset)$ such that $U(x) = F(W(x))$ for $x\notin \ball{0}{R}$, $\norm{\diff W}_\infty + \norm{\diff^2 W}_\infty < +\infty$, $\int_\msm \exp(-W(x))\rmd x <\infty$, $\lim_{\norm{x} \to \plusinfty}W(x)= +\infty$  and $\lim_{w \in \plusinfty} F'(w)= +\infty$.
   Then there exists $C>0$ and $\tilde{M}>0$ such that for all $M>\tilde{M}$, $(P_t^M)_{t\geqslant 0}$ admits a unique invariant measure $\tpi_M$ that satisfies
 \begin{equation*}
   \norm{\tpi  - \tpi_M}_{\rme^W} \ \leq \  C \int_{\msm}  \po \norm{\nabla U(x)} - M\pf_+ \rme^{W(x) - U(x)}  \rmd x\eqsp,
 \end{equation*}
 where $\tpi = \pi \otimes \loiy$ where $\pi$ is the probability measure on $(\rset^d,\mcbb(\rset^d))$ with density with respect to the Lebesgue measure proportional to $x \mapsto \rme^{-U(x)}$.  
\end{proposition}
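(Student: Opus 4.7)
The plan is to apply Proposition \ref{prop:invariant_2} with $(P_t^1)_{t \geq 0} = (P_t^M)_{t \geq 0}$, $(P_t^2)_{t \geq 0} = (P_t)_{t \geq 0}$ and $V = \rme^W$, so that $\mu_1 = \tpi_M$ and $\mu_2 = \tpi$. Non-explosion of $(P_t^M)$ follows by the same argument as Proposition \ref{prop:non-explosion_BPS} since $\lambda_1 \wedge M \leq \lambda_1$, and $\tpi$ is invariant for $(P_t)$ by Corollary \ref{prop:invarince-BPS}. That $\tpi(\rme^W) < +\infty$ follows from the growth assumptions: since $F'(w) \to +\infty$ as $w \to +\infty$ and $W(x) \to +\infty$ as $\norm{x} \to +\infty$, one has $U(x) - 2W(x) = F(W(x)) - 2W(x) \to +\infty$, hence $\rme^{W-U} \leq \rme^{-W}$ outside a compact set, which yields the required integrability from $\int \rme^{-W}\rmd x < +\infty$.

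The key input I would import from the companion paper \cite{DurmusGuillinMonmarche:bouncy} is the existence of $\tilde M > 0$ and constants $C_0,\rho > 0$ such that, for all $M > \tilde M$, the semi-group $(P_t^M)$ is $\rme^W$-uniformly geometrically ergodic with these constants; in particular $\tpi_M$ exists and is unique. The uniformity in $M$ reflects the fact that the Foster--Lyapunov drift inequalities established there mainly exploit the refreshment mechanism and qualitative lower bounds on the bounce rate outside of a compact set, both of which survive the truncation as soon as $M$ exceeds the relevant threshold.

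Next I would verify the regularity assumptions of Proposition \ref{prop:invariant_2}. Since $\msy \subset \cball{0}{1}$, both $(P_t^M)$ and $(P_t)$ trivially satisfy \Cref{ass:compact_sup} with supports contained in $\cball{x}{t} \times \msy$. Adapting the construction of Proposition \ref{prop:approx-BPS} to the truncated bounce rate, $(P_t^M)$ is smoothly and compactly approximable, so by Theorem \ref{theo:core_smooth_approx} it is Feller on $\rmc_0(\msm)$ and $\mrc_c^1(\msm)$ is a core of its strong generator; in particular $\mrc_c^1(\msm)$ is contained in $\mrb_0^1(\msm)$ for $(P_t^M)$. By \cite[Proposition 14.10]{davis:1993} together with Lemma \ref{lem:stab_c1c} applied to smooth approximations of $(P_t^M)$, the function $g_t := \int_0^t P_s^M f\,\rmd s$ lies in $\mrc_c^1(\msm)$ for every $f \in \mrc_c^1(\msm)$. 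Since $(P_t)$ is itself smoothly and compactly approximable, Theorem \ref{theo:core_smooth_approx} gives $\mrc_c^1(\msm) \subset \mrD(\sgenerator)$, so $g_t \in \mrD(\sgenerator)$, as required.

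Finally, I would compute the bound using the total jump mechanism representation from the example following Proposition \ref{prop:superposition}. A direct calculation shows that for any measurable $h$,
\[\lambda^M Q^M h - \lambda Q h + (\lambda - \lambda^M) h = (\ps{y}{\nabla U(x)}_+ - M)_+ \bigl(h(x,y) - h(x, \Refl(x,y))\bigr).\]
For $h$ with $\normLigne{h / \rme^W}_\infty \leq 1$, since $W$ depends only on $x$ while $\Refl$ acts on $y$, $\absLigne{h(x,y) - h(x, \Refl(x,y))} \leq 2\rme^{W(x)}$. Because $\norm{y}\leq 1$, $\ps{y}{\nabla U(x)}_+ \leq \norm{\nabla U(x)}$, hence $(\ps{y}{\nabla U(x)}_+ - M)_+ \leq (\norm{\nabla U(x)} - M)_+$. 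Integrating against $\tpi = \pi \otimes \loiy$ with $\pi \propto \rme^{-U}\rmd x$ and applying Proposition \ref{prop:invariant_2} then yields the announced inequality. The hardest step is the first one: establishing uniform-in-$M$ geometric ergodicity of $(P_t^M)$, which relies on the detailed Foster--Lyapunov analysis of the BPS developed in \cite{DurmusGuillinMonmarche:bouncy}; the verification of the technical generator conditions is somewhat delicate but is handled systematically via the smooth-approximability framework of Sections \ref{sec:stab_c1c}--\ref{eq:invariant_main}.
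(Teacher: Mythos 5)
Your approach is close in spirit to the paper's, but there is a genuine gap in the regularity step that the paper's proof is specifically designed to avoid.

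You set up \Cref{prop:invariant_2} with $P^1_t = P^M_t$ and $P^2_t = P_t$, which requires that $g_t := \int_0^t P^M_s f\,\rmd s \in \mrD(\sgenerator)$ (the strong generator of the \emph{untruncated} BPS) for every $f \in \rmc^1_c(\msm)$. You assert that $g_t \in \rmc^1_c(\msm)$ via ``Lemma~\ref{lem:stab_c1c} applied to smooth approximations of $(P^M_t)$.'' This does not follow. Lemma~\ref{lem:stab_c1c} requires the characteristics to satisfy \Cref{ass:stability}, and $\lambda_1 \wedge M$ is not $\rmc^1$: the truncation introduces a kink along $\{\langle y, \nabla U(x)\rangle_+ = M\}$. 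Applying the lemma to a smooth approximation $(P^{M,\varepsilon}_t)$ gives $\int_0^t P^{M,\varepsilon}_s f\,\rmd s \in \rmc^1_c$ with bounds on the gradient, but these bounds do not yield convergence of the gradients as $\varepsilon \to 0$, so you cannot conclude that the limit $\int_0^t P^M_s f\,\rmd s$ is $\rmc^1$ (or even lies in $\mrD(\sgenerator)$). Note that $\mrD(\sgenerator_M)$, which you do obtain via \cite[Proposition 14.10]{davis:1993}, is a priori unrelated to $\mrD(\sgenerator)$.

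The paper sidesteps this precisely by never putting $(P^M_t)$ in the $P^1$ slot. Instead it applies \Cref{prop:invariant_2} \emph{twice}, each time with $P^1 = P^{M,\varepsilon}$ (the smooth approximation, for which Lemma~\ref{lem:stab_c1c} and Proposition~\ref{propo:c_1_c_sgenerator} legitimately give $\int_0^t P^{M,\varepsilon}_s f\,\rmd s \in \rmc^1_c \subset \mrD(\sgenerator_{M,\varepsilon}) \cap \mrD(\sgenerator_M) \cap \mrD(\sgenerator)$), taking $P^2$ equal to $P$ and to $P^M$ respectively, and then uses the triangle inequality $\|\tpi - \tpi_M\|_{\rme^W} \leq \|\tpi - \tpi_{M,\varepsilon}\|_{\rme^W} + \|\tpi_{M,\varepsilon} - \tpi_M\|_{\rme^W}$ before sending $\varepsilon \to 0$. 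To make this work one also needs the geometric-ergodicity constants and $\tpi_M(\rme^W), \tpi_{M,\varepsilon}(\rme^W)$ to be uniform in $M>M_0$ and $\varepsilon \in [0,\varepsilon_0]$; the paper establishes this by verifying a single Foster--Lyapunov drift inequality $\generator_{M,\varepsilon} V \leq -\alpha' V + C'$ and a minorization condition uniformly over this range, rather than importing the bound solely for the truncated family. Your computation of $\lambda^M Q^M h - \lambda Q h + (\lambda - \lambda^M) h = (\langle y, \nabla U(x)\rangle_+ - M)_+\bigl(h(x,y) - h(x,\Refl(x,y))\bigr)$ and the ensuing estimate are correct; the missing ingredient is the double-application-plus-$\varepsilon\to 0$ device to validate the hypotheses under which that estimate is usable.
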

   
For example, if $U(x) = \ps{x}{Ax}$ for some definite positive matrix $A$ outside a ball, then these conditions are satisfied by $W(x) = \sqrt{1+U(x)}$ and $F(w)=w^2-1$. In this case, \Cref{prop:bps-invariant2} implies that there exist $M_0,C,c>0$ such that for all $M>M_0$,
 \begin{equation*}
   \norm{\tpi  - \tpi_M}_{\rme^W} \ \leq \  C \rme^{-c M^2}\eqsp.
 \end{equation*}

 Note that for all $M>0$, $(P_t^M)_{t\geqslant 0}$ can be sampled by thinning procedures, see \cite{lewis:shedler:1979,Thieullen2016}.

\begin{proof}

 For all $\varepsilon>0$, in a similar way as in \Cref{prop:approx-BPS}, we can construct a semi-group $(P_t^{M,\varepsilon})_{t\geqslant 0}$ with characteristics $(\varphi,\lambda_{M,\varepsilon},Q_1,\lambda_2,Q_2)$ where $\lambda_{M,\varepsilon}$ is such that $(P_t^{M,\varepsilon})_{t\geqslant 0}$ satisfies \Cref{ass:compact_sup}-\Cref{ass:stability} and that 
 \begin{equation}
   \label{eq:bound_lambda_1_eps_M}
   \sup_{(x,y)\in\rset^d \times \msy} |\lambda_1(x,y) \wedge M - \lambda_{M,\varepsilon}(x,y)| \ \leq \ \varepsilon\eqsp.
 \end{equation}
 Let us show that we can apply \Cref{prop:invariant_2} twice, with each time $P_t^1 = P_t^{M, \varepsilon}$ and $P_t^2$ equal either to $P_t^M$ or $P_t$. Consider the Lyapunov function defined for all $(x,y)\in \rset^d \times \msy$ by $V(x,y) = \exp( W(x)) \phi(\ps{y}{\na W(x)})$, where $\phi\in\rmc^2(\rset)$ is an increasing function, with $\phi(r)=1$ for $r\leq - 2$ and $\phi(r) \leq  3$ for $r\geqslant 1$.

 We show first that $(P_t)_{t \geq 0}$, $( P_t^{M})_{t \geq 0}$ and $ ( P_t^{M, \varepsilon})_{t \geq 0}$ are $V$-uniformly geometrically ergodic, which will imply that all these semi-groups admit a unique stationary measure for which $V$ is integrable.  For $h>0$, let $\generator_{hW}$ be the generator of the BPS semi-group with potential $hW$, refreshment rate $\rate>0$ and refreshment law $\loiy$. Following \cite[Section~3.2]{DurmusGuillinMonmarche:bouncy}, there exist $\phi: \rset \to \ccint{1,3}$ and $h_0 >0$  such that there exist $\alpha,C>0$ satisfying 
 \[\generator_{h_0 W} V \ \leq \ -\alpha V + C\eqsp.\]
Moreover, for all $M>0$, denoting $\generator_M$ the generator of $(P_t^M)_{t\geqslant 0}$,
\begin{equation*}
  (\generator_M - \generator_{h_0 W}) V(x,y)   =    \parentheseDeux{ \lambda_1(x,y)\wedge M - h_0\ps{y}{\na W(x)}_+}  \parentheseDeux{ \phi(-\ps{y}{\na W(x)}) - \phi(\ps{y}{\na W(x)})} \rme^{W(x)}\eqsp.
\end{equation*}
Note that, since $\msy \subset \cball{0}{1}$, $W$ is Lipschitz and $F'$ and $W$ going to infinity at infinity, then for $M_0=h_0\norm{\na W}_\infty$ and some $R_0>R$ large enough, for all $M\geqslant M_0$, $x\notin \ball{0}{R_0}$ and $y\in\msy$,
\begin{align*}
  \lambda_1(x,y)\wedge M - h_0\ps{y}{\na W(x)}_+  & \geqslant  \po \po F'(W(x))-h_0\pf \ps{y}{\na W(x)}_+\pf \wedge \po M - h_0\norm{\na W}_\infty\pf \\
                                                  & \geqslant  0\eqsp.
\end{align*}
Besides, $\phi$ being increasing, $\phi(-r) - \phi(r) \leq 0$ for all $r\geqslant 0$, so that for all $M\geqslant M_0$, $x\notin \ball{0}{R_0}$ and $y\in\msy$,
\[\generator_M V(x,y) \ \leq \ \generator_{h_0 W} V(x,y) \ \leq \ -\alpha V(x,y) + C \eqsp.\]
Hence, for $M\geqslant M_0$ and all $(x,y)\in\rset^d\times\msy$,
\[\generator_M V(x,y) \ \leq \ -\alpha V(x,y) + C + \sup_{(x,y) \in \ball{0}{R_0}\times\msy} \abs{\generator_M V(x,y) + \alpha V(x,y)}\ \leq \ -\alpha V(x,y) + C'\eqsp,\]
for some $C'$ that does not depend on $M>M_0$, since $\lambda_1$ is bounded on $\ball{0}{R_0}\times\msy$.
By a similar argument, denoting $\generator_{M,\varepsilon}$ the generator of $(P_t^{M,\varepsilon})_{t\geqslant 0}$, 
\[\generator_{M,\varepsilon} V  =  \generator_{M} V + \po \generator_{M,\varepsilon} -\generator_{M}\pf V
 \leq  - \alpha V + 6 \varepsilon \rme^{W} +C  \leq  -(\alpha - 12\varepsilon) V + C\eqsp.\]
Hence, we have obtained $M_0,\varepsilon_0,\alpha',C'>0$ such that for all $M>M_0$, all $\varepsilon \in [0,\varepsilon_0]$ then
\begin{equation}\label{eq:bps-lyapunov}
\generator_{M,\varepsilon} V   \leq  - \alpha' V + C'\eqsp.
\end{equation}

Moreover, following \cite[Section~3.3]{DurmusGuillinMonmarche:bouncy}, for all compact set $\msk \subset \rset^d\times\msy$, there exist $\eta,t_0>0$ such that, for all $M>M_0$ and $\varepsilon\in[0,\varepsilon_0]$, for all $(x,y),(x',y')\in\msk$ and all $t\geqslant t_0$,
\begin{equation}\label{eq:bps-couplage}
\tvnorm{\updelta_{x,y} P_t^{M,\varepsilon} -\updelta_{x',y'} P_t^{M,\varepsilon}}  \leq  2(1-\eta)\eqsp.
\end{equation}
Note that, indeed, $\eta$ and $t_0$ do not depend on $M$ or $\varepsilon$ since their construction only involves the supremum of $\lambda_{M,\varepsilon}$ over some compact set, which is smaller than $\varepsilon_0$ plus the supremum of $\lambda_1$ over the same compact (see \cite[Section~3.3]{DurmusGuillinMonmarche:bouncy} for details).

By \cite[Theorem 6.1]{meyn:tweedie:1993:III}, \eqref{eq:bps-lyapunov} together with \eqref{eq:bps-couplage} implies that $(P_t^{M,\varepsilon})_{t\geqslant 0}$  admits a unique invariant measure $\tpi_{M,\varepsilon}$, $\tpi_{M,\varepsilon}(V) < \plusinfty$ and is $V$-uniformly ergodic for all $M>M_0$ and $\varepsilon\in[0,\varepsilon_0]$ with some constants that does not depend on $M$ nor $\varepsilon$ (see \cite[Section~3]{DurmusGuillinMonmarche:bouncy}  for details).
More precisely for all $M>M_0$ and $\varepsilon\in[0,\varepsilon_0]$, there exists $C \geq 0$ and $\rho >0$ such that for all initial distribution $\nu_0$, $\nu_0(V) < \plusinfty$,
\begin{equation}
  \label{eq:distance_inv_measure}
  \tvnorm{\nu_0 P_t^{M,\varepsilon} - \tpi_{M,\varepsilon}} \leq C \rho^t \nu_0(V) \eqsp.
\end{equation}

The case $\varepsilon=0$ in particular implies that $(P_t^{M})_{t\geqslant 0}$ admits an invariant measure $\tpi_M$ that satisfies $\tpi_M( V) < +\infty$. Besides, note that for $R_1>R_0$ large enough to ensure that $F(W(x)) \geqslant 2W(x)$ for all $x\notin \ball{0}{R_1}$,
\[\int_\msm \rme^{W(x) - U(x)}\rmd x \ \leq \ \int_{\ball{0}{R_1}} \rme^{W(x) - U(x)}\rmd x  + \int_{\rset^d \setminus \ball{0}{R_1}} \rme^{-W(x)}\rmd x \ < \ +\infty\eqsp,\]
so that $\tpi(V) <+\infty$. Finally, note that $V \leq \rme^W \leq 2 V$, so that the associated $V$-norms are equivalent.

In order to apply \Cref{prop:invariant_2}, it remains to check the regularity conditions. Since $(P_t^{M,\varepsilon})_{t\geqslant 0}$ satisfies  \Cref{ass:compact_sup}-\Cref{ass:stability}, by \Cref{lem:stab_c1c}-\Cref{lem:compact_sup}, $\int_0^t P_s^{M,\varepsilon} f \rmd s \in \rmc^1_c(\rset^d\times\msy)$ for all $f \in \rmc^1_c(\rset^d\times\msy)$, $t\geqslant 0$, $M,\varepsilon>0$.  By \Cref{propo:c_1_c_sgenerator}, for all $M,\varepsilon>0$, $\rmc^1_c(\rset^d\times\msy) \subset \mrD(\sgenerator_{M,\varepsilon} )\cap\mrD(\sgenerator_{M} )\cap \mrD(\sgenerator) $, where $\sgenerator$ is the strong generator of $(P_t)_{t\geqslant 0}$. As a consequence, we can apply twice \Cref{prop:invariant_2} which implies, combining with \eqref{eq:distance_inv_measure}, that there exists $C_1 \geq 0$ satisfying for any $M>M_0$  and  $\varepsilon\in\ccint{0,\varepsilon_0}$, 
\begin{align*}
\norm{\tpi - \tpi_M}_{\rme^W} 
                               & \leq  \norm{\tpi - \tpi_{M,\varepsilon}}_{\rme^W}  + \norm{\tpi_{M,\varepsilon} - \tpi_M}_{\rme^W} \\
 & \leq  C_1 \sup\ensemble{  \int_{\rset^d\times\msy}   \abs{\lambda_1 - \lambda_{M,\varepsilon}} (|Q_1h| + |h|) \rmd \tpi}{\norm{h\rme^{-W}}_\infty \leq 1} \\
 &   +\  C_1 \sup\ensemble{\int_{\rset^d\times\msy}   \abs{\lambda_1\wedge M - \lambda_{M,\varepsilon}} (|Q_1h| + |h|) \rmd \tpi_M }{\norm{h\rme^{-W}}_\infty \leq 1} \eqsp,
\end{align*}
Using that $Q_1 \rme^W = \rme^W$, \eqref{eq:bound_lambda_1_eps_M}, $\tpi(\rme^{W})$ and $\tpi_M(\rme^{W}) < \plusinfty$, we obtain that there exists $C_2 \geq 0$ such that 
\[\norm{\tpi - \tpi_M}_{\rme^W}   \leq C_2\parentheseDeux{ \int_{\rset^d\times\msy}   \defEns{ \lambda_{1}-\lambda_1\wedge M } \rme^W \rmd \tpi + \varepsilon}\eqsp. \]
 Finally, the proof is concluded taking $\varepsilon \to 0$ and upon noting that  for all $(x,y) \in\rset^d\times\msy$,
\begin{align*}
& \lambda_{1}(x,y) -  \lambda_1(x,y) \wedge M   = (\ps{y}{\na U(x)}_+-M)\1_{ \coint{M, \plusinfty}}(\ps{y}{\na U(x)}_+)\\
 & \leq  (\norm{\na U(x)}-M)\1_{ \coint{M, \plusinfty}}(\ps{y}{\na U(x)}_+)\ \leq \ (\norm{\na U(x)}-M)_+\eqsp.
\end{align*}
\end{proof}
\end{example*}
 

\section*{Acknowledgements}
Alain Durmus acknowledges support from Chaire BayeScale "P. Laffitte". Pierre Monmarch\'e acknowledges support from the French
ANR project ANR-12-JS01-0006 - PIECE. Arnaud Guillin and Pierre Monmarch\'e acknowledge support from the French
ANR-17-CE40-0030 - EFI - Entropy, flows, inequalities.
\bibliographystyle{plain}
\bibliography{../Bibliography/biblioBouncy}

\appendix
\section{On  generators of contraction semigroups   and their core}
\label{sec:gener-contr-semigr}
In this section, we detail how to establish that a
particular subspace is a core for the generator of a contraction
semigroup.  In all this section, we consider a semigroup
$(T_t)_{t \geq 0}$ on a Banach space $\msl$ equipped with the norm $\normL{\cdot}$. We assume the following condition on $(T_t)_{t \geq 0}$:
\begin{assumptionC}
  \label{assC:semigroup}
The semigroup $(T_t)_{t \geq 0}$ is a strongly continuous contraction on $\msl$ \ie~for any $f \in \msl$, $\lim_{s \to 0}\normL{T_s f -f} = 0$ and for any $t \geq 0$, $\normL{T_t f} \leq \normL{f}$.
\end{assumptionC} 
Next, we define the generator $\generatorT$ and its domain of $(T_t)_{t \geq 0}$ defined by
\begin{align*}
  \domain(\generatorT) &= \ensemble{f \in \msl}{\text{ there exists } g \in \msl\, , \, \lim_{t \to \plusinfty} \normL{t^{-1}(T_tf-f) -g} = 0} \\
  \generatorT f &= g \in \msl\, \eqsp, \text{ such that } \, \lim_{t \to \plusinfty} \normL{t^{-1}(T_tf-f) -g} = 0  \eqsp.
\end{align*}

Note that by the Hille-Yosida theorem \cite[Theorem
2.6]{ethier:kurtz:1986}, $\domain(\generatorT)$ is dense in $\msl$,
$\range(\lambda \Id -\generatorT)$ is dense in $\msl$ for some
$\lambda >0$, where $\range(\mathcal{C})$ stands for the range of an
operator $\mathcal{C}$ in $\msl$, and $\generatorT$ is dissipative, \ie~for any $f \in \domain(\generatorT)$ and $\lambda >0$, $\normL{(\lambda \Id - \generatorT)f} \geq \lambda \normL{f}$.

We are now interested in identifying a core for $\generatorT$. Recall
that a core $\msc$ of a closed operator $\generatorT$ with domain
$\domain(\generatorT)$ on $\msl$ is a subspace of
$\domain(\generatorT)$ such that the closure  of the restriction
$\restr{\generatorT}{\msc}$ of $\generatorT$ to $\msc$ is equal to
$\generatorT$. Our main tool is the following result which is in
essence a reformulation of \cite[Proposition 3.7]{ethier:kurtz:1986}.

\begin{proposition}
  \label{propo:core_approximation}
  Assume \Cref{assC:semigroup} and that there exists a sequence
  $\{(T_t^{n})_{t \geq 0} \, : \, n \in \nsets\}$ of semigroups
  satisfying \Cref{assC:semigroup} and for any $t \geq 0$,
  $n \in \nset$ and $f \in \mrl$,
  \begin{equation}
    \label{eq:condition_core_approx}
    \normL{T_tf - T_t^{n}f} \leq \varepsilon_n t \normL{f} \eqsp,
  \end{equation}
  for a
  sequence $(\varepsilon_n)_{n \in \nsets}$ such that
  $\lim_{n \to \plusinfty} \varepsilon_n =0$. Suppose in addition that
  there exists a dense subspace $\msc$ of $\msl$ such that $\msc \subset \domain(\generatorT)$ and that $T_t^nf \in \msc$ for any $t \geq 0$, $n \in \nsets$, $f \in \msc$. Then $\msc$ is a core for $(T_t)_{t \geq 0}$. 
\end{proposition}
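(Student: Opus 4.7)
The plan is to apply the standard Hille--Yosida characterization of cores (see \cite[Chap.~I, Prop.~3.3]{ethier:kurtz:1986}): since $\generatorT$ generates a strongly continuous contraction semigroup by \Cref{assC:semigroup}, a dense subspace $\msc \subset \domain(\generatorT)$ is a core if and only if $(\lambda - \generatorT)\msc$ is dense in $\msl$ for some $\lambda > 0$. Fix $\lambda > 0$. Since $\msc$ is already dense in $\msl$, it suffices to produce, for every $g \in \msc$, a sequence $h_k \in \msc$ with $(\lambda - \generatorT)h_k \to g$ in $\msl$; the conclusion then follows by taking closures.

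Given $g \in \msc$, the target limit is the resolvent $R_\lambda g = \int_0^\infty e^{-\lambda t}T_t g\,\rmd t \in \domain(\generatorT)$, which satisfies $(\lambda - \generatorT)R_\lambda g = g$. Since the approximating semigroups $(T_t^n)_{t \geq 0}$ leave $\msc$ invariant, I would approximate $R_\lambda g$ by the truncated Riemann sums
\[
h_{n,T,N} \defeq \frac{T}{N}\sum_{j=0}^{N-1} e^{-\lambda j T/N}\, T_{jT/N}^n g \in \msc.
\]
Strong continuity of $T_t^n$ together with the error bound \eqref{eq:condition_core_approx} yields $h_{n,T,N} \to R_\lambda g$ in $\msl$ as $N,T,n \to \infty$, along a suitable diagonal subsequence; here one uses $\normL{R_\lambda^n g - R_\lambda g} \leq \varepsilon_n \normL{g}/\lambda^2$ with $R_\lambda^n g = \int_0^\infty e^{-\lambda t} T_t^n g \, \rmd t$. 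The crux of the argument, and its main obstacle, is to control $\generatorT$ applied to these Riemann sums: since $T_{jT/N}^n g \in \msc \subset \domain(\generatorT)$ the quantity $\generatorT T_{jT/N}^n g$ is well defined, but there is no direct formula for it because $\generatorT$ generates $T_t$, not $T_t^n$. This is resolved by the almost-commutation estimate
\[
\normL{\generatorT T_t^n g - T_t^n \generatorT g} \leq 2\varepsilon_n \normL{g}, \qquad g \in \msc,\ t \geq 0,\ n \in \nsets,
\]
which is proved by decomposing $T_s T_t^n g - T_t^n T_s g = (T_s - T_s^n)T_t^n g + T_t^n (T_s^n - T_s)g$, bounding each summand by $\varepsilon_n s \normL{g}$ via \eqref{eq:condition_core_approx} and the contraction property, dividing by $s$, and letting $s \downarrow 0$ (both one-sided generators exist since $g$ and $T_t^n g$ lie in $\msc \subset \domain(\generatorT)$).

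Once this estimate is in hand, applying $\generatorT$ term by term to $h_{n,T,N}$ and substituting gives
\[
\generatorT h_{n,T,N} = \frac{T}{N}\sum_{j=0}^{N-1} e^{-\lambda jT/N}\,T_{jT/N}^n \generatorT g + E_{n,T,N},\qquad \normL{E_{n,T,N}} \leq C\varepsilon_n\normL{g}/\lambda,
\]
for an absolute constant $C$; the principal term is itself a Riemann sum, now applied to $\generatorT g \in \msl$, and converges to $R_\lambda (\generatorT g) = \generatorT R_\lambda g$ along the same diagonal subsequence as before. Hence $\generatorT h_{n_k,T_k,N_k} \to \generatorT R_\lambda g$ in $\msl$, so that $(\lambda - \generatorT)h_{n_k,T_k,N_k} \to g$ in $\msl$ with each $h_{n_k,T_k,N_k} \in \msc$. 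This places $\msc$ inside the $\msl$-closure of $(\lambda - \generatorT)\msc$, and density of $\msc$ in $\msl$ then yields the density of $(\lambda - \generatorT)\msc$, completing the proof via Step 1.
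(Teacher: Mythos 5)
Your proof is correct and follows the same high-level strategy as the paper's: both reduce, via \cite[Chapter~1, Proposition~3.1]{ethier:kurtz:1986}, to showing that $(\lambda\Id - \generatorT)\msc$ is dense, and both accomplish this by approximating the resolvent $\int_0^\infty e^{-\lambda t}T_t g\,\rmd t$ with Riemann sums built from the approximating semigroups $T_t^n$, which stay in $\msc$ by the invariance hypothesis. The one substantive difference is the key quantitative lemma used to transfer from $T^n$ back to $T$. The paper first notes that $\msc\subset\domain(\generatorT^n)$ (since $\msc$ is automatically a core for the generator $\generatorT^n$ of $T^n$, by \cite[Chapter~1, Proposition~3.3]{ethier:kurtz:1986}), proves the direct generator comparison $\normL{\generatorT g-\generatorT^n g}\leq\varepsilon_n\normL{g}$ for $g\in\msc$, and then applies $(\lambda\Id-\generatorT^n)$ to the Riemann sum $f_p^n$, swapping $\generatorT^n$ for $\generatorT$ in a single triangle inequality together with a uniform-in-$p$ bound on $\normL{f_p^n}$. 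You instead prove the almost-commutation estimate $\normL{\generatorT T_t^n g-T_t^n\generatorT g}\leq 2\varepsilon_n\normL{g}$, which has the small merit of never invoking $\generatorT^n$ or its domain at all (the right-hand side of your decomposition is bounded before passing to the limit, not differentiated term by term), but it costs a factor of $2$ and a marginally heavier diagonal bookkeeping at the end, where you must track the three parameters $n$, $T$, $N$ jointly. Both routes are sound; the paper's is slightly more economical in bookkeeping, yours slightly more self-contained.
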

\begin{proof}
  We show that
  \begin{equation}
    \label{eq:goal_core_propo}
    \msc \in \closure(\range(\restr{\{\lambda \Id -
    \generatorT\}}{\msc}))
  \end{equation}
  where $\closure(\msd)$ is the closure of $\msd \subset \msl$ in $\msl$. Indeed, if this statement holds and since $\msc$ is dense, then \cite[Proposition 3.1]{ethier:kurtz:1986} completes the proof.

  First, note that $\msc \subset \domain(\generatorT^n)$ for any $n \in \nsets$, where $\generatorT^n$ is the generator associated with $(T_t^n)_{t \geq 0}$  since $\msc$ is a core for $\generatorT^n$ by \cite[Proposition 3.3]{ethier:kurtz:1986}. We now show that for any $n \in \nsets$ and  $f \in \msc \subset \domain(\generatorT) \cap \domain(\generatorT^n)$, $\normL{\generatorT f - \generatorT^n f} \leq \varepsilon_n$. Indeed, we have by \eqref{eq:condition_core_approx}, 
  \begin{equation}
\label{eq:condition_core_approx_2}    
 \normL{\generatorT f - \generatorT^n f} \leq  \limsup_{s \to 0} s^{-1} \normL{T_s f - T_s^nf} \leq \varepsilon_n\eqsp.
  \end{equation}
  
We now turn in showing \eqref{eq:goal_core_propo}.  Let $f \in \msc$ and  for any $n \in \nsets$ and $p \in \nsets$, define
  \begin{equation*}
    f^{n}_p = p^{-1} \sum_{k=1}^{p^2} \rme^{-\lambda k /p} T_{k/p}^n f \eqsp. 
  \end{equation*}
  Note that for any $n,p \in \nsets$, $f^{n}_p \in \msc$ by assumption on $\{(T_t^{n})_{t \geq 0} \, : \, n \in \nsets\}$ and $\normL{f^n_p} \leq \lambda^{-1} C\normL{f}$ for some constant $C\geq 0$ independent of $n,p$  and  by \cite[Proposition 2.1, Proposition 1.5-(b)]{ethier:kurtz:1986} for any $n \in \nsets$,
  \begin{align*}
    \lim_{p \to \plusinfty}     (\lambda \Id - \generatorT^n)f_p^n & = (\lambda \Id - \generatorT^n) \int_{0}^{\plusinfty} \rme^{-\lambda s} T_s^n f \rmd s =  \int_{0}^{\plusinfty} \rme^{-\lambda s} T_s^n (\lambda \Id - \generatorT^n)f \rmd s \\
    &= (\lambda \Id - \generatorT^n)^{-1} (\lambda \Id - \generatorT^n) f = f \eqsp. 
  \end{align*}
  Then for any $n \in \nsets$, there exists $p_n \in \nsets$, such that $\normL{f - (\lambda \Id - \generatorT^n)f_{p_n}^n} \leq n^{-1}$ and using \eqref{eq:condition_core_approx_2}, we get for any $n \in \nsets$,
  \begin{equation*}
    \normL{f - (\lambda \Id - \generatorT)f_{p_n}^n} \leq      \normL{f - (\lambda \Id - \generatorT^n)f_{p_n}^n} + \normL{\generatorT f_{p_n}^n - \generatorT^nf_{p_n}^n} \leq n^{-1} + \varepsilon_n \eqsp.
  \end{equation*}
Taking the limit $n \to \plusinfty$ concludes the proof of \eqref{eq:goal_core_propo}.
\end{proof}


\end{document}